\newtheorem{theorem}{Theorem}[section]
\newtheorem{lemma}{Lemma}[section]
\newtheorem{prop}{Proposition}[section]
\newtheorem{definition}{Definition}[section]
\newtheorem{conjecture}{Conjecture}
\newtheorem{condition}{Condition}[section]
\newtheorem{corollary}{Corollary}[section]
\newtheorem{proposition}{Proposition}[section]
\newcommand{\gl}{{\lambda}}
\newcommand{\gk}{{\kappa}}
\newcommand{\gb}{{\beta}}
\newcommand{\e}{{\varepsilon}}
\begin{document}

\title{Measures on the Spectra of Algebraic Integers}
\author{Alex Batsis and Tom Kempton\footnote{The University of Manchester, Oxford Road, Manchester, M13 9PL, United Kingdom.}}
\maketitle

\begin{abstract}{\noindent 
Given a real number $\beta>1$, the spectrum of $\beta$ is a well studied dynamical object. In this article we show the existence of a certain measure on the spectrum of $\beta$ related to the distribution of random polynomials in $\beta$, and discuss the local structure of this measure. We also make links with the question of the Hausdorff dimension of the corresponding Bernoulli Convolution.}
\end{abstract}

\section{Introduction}
Given a real number $\beta>1$ and an alphabet $\mathcal A$, the spectrum 
\[
X_{\mathcal A}(\beta):=\left\{\sum_{i=1}^{n} c_i\beta^{n-i}: n\in\mathbb N, c_i\in\mathcal A\right\}
\]
has been the focus of much attention. In particular, when $\mathcal A=\{0,\cdots, \lfloor \beta \rfloor\}$ then it is known  that $X_{\mathcal A}(\beta)$ is uniformly discrete if and only if $\beta$ is a Pisot number (i.e. an algebraic number, all of whose Galois conjugates have modulus strictly less than one) \cite{ AkiyamaKomornik, Bugeaud96, FengTopology, GarsiaEntropy}. Additionally, $X_{\mathcal A}(\beta)$ is relatively dense in this setting, making the sets $X_{\mathcal A}(\beta)$ Delone sets (uniformly discrete, relatively dense). Delone sets give useful mathematical models for quasicrystals and so the above construction gives a number-theoretic construction of important physical objects.

Much progress has been made on giving dynamical descriptions of sets $X_{\mathcal A}(\beta)$ \cite{EJK, FengWen, HareMasakovaVavra}. If $\beta$ is a Pisot number then $X_{\mathcal A}(\beta)$ can be generated by a substitution system \cite{FengWen}. Moreover, for Pisot $\beta$ there is a naturally related cut and project set which contains $X_{\mathcal A}(\beta)$. In all known examples of Pisot $\beta$ with $\mathcal A\subset \mathbb Z$ the set $X_{\mathcal A}(\beta)$ coincides with this cut and project set, but the question of whether these sets always coincide remains open, and there are some examples with a complex alphabet for which the cut and project set contains finitely many extra points which are not in $X_{\mathcal A}(\beta)$ \cite{HareMasakovaVavra}. A generalisation of this cut and project structure to general hyperbolic algebraic integers is given in section \ref{GeneralSec}.


We are interested in measures on the sets $X_{\{-1,0,1\}}(\beta)$. In particular, we are interested in what one can say about the measures $\mu_n$ given by
\[
\mu_n(x)=\dfrac{1}{4^n}\mathcal N_n(x)
\]
where
\[
\mathcal N_n(x)=\#\{a_1\cdots a_n, b_1\cdots b_n \in \{0,1\}^n: \sum_{i=1}^n (a_i-b_i)\beta^{n-i}=x\}.
\]
The measure $\mu_n$ is the distribution of the set of differences
\[
\sum_{i=1}^n a_i\beta^{n-i}-\sum_{i=1}^n b_i\beta^{n-i}
\]
where each $a_i, b_i$ is picked from $\{0,1\}$ according to the $(\frac{1}{2},\frac{1}{2})$ Bernoulli measure.\footnote{There has been a lot of recent research into a different class of measures (Patterson measures) on cut and project sets. These are related to diffraction on quasicrystals, where they play the role of the intensity of the Bragg peak \cite{Lenz08, RichardStrungaru}. Loosely speaking, the difference between the class of measures that we study and Patterson measures is that our measures incorporate information on the number of different codings $a_1\cdots a_n$ for which $\sum_{i=1}^n a_i\beta^{n-i}=x$, whereas Patterson measures do not. The analogue of $\mu_n(x)$ for the Patterson measure would be (more or less) \[\gamma_n(x)=\#\{(y,z)\in (X_{\{0,1\}}(\beta))^2: y-z=x \}.\] This difference is crucial for our applications.} We focus on the case that $\beta$ is an algebraic integer and a root of a \{-1,0,1\} polynomial but does not have any Galois conjugates of absolute value one, we call such $\beta$ hyperbolic.

Broadly, we are interested in the question of whether the measures $\mu_n$, appropriately rescaled, have a limit $\mu$ as $n$ tends to infinity, and whether that limit has any `local structure' analagous to that of the set $X_{\mathcal A}(\beta)$. Assuming some technical (but checkable) conditions, our results hold for general hyperbolic $\beta$, but all of the ideas behind our proofs are present in the golden mean case, which is notationally much simpler, and for this reason we prove our results first for the golden mean. The golden mean also has the advantage that the higher dimensional objects which we construct are only two dimensional, and so can be more easily visualised. 

Our main theorems are the following.
\begin{theorem}\label{Thm1}
Let $\beta$ be hyperbolic. Then there exists a real number $\lambda>1$, such that for all $x\in X(\beta)$ the limit measure $\mu$ given by
\[
\mu(x):=\lim_{n\to\infty}\frac{1}{\lambda^n}\mathcal N_n(x)
\]
exists and has $\mu(x)\in(0,\infty)$ for $x\in X(\beta)$. Furthermore, the measure $\mu$ has infinite total mass.  
\end{theorem}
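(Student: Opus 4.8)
The plan is to realise the sequence $(\mathcal N_n)$ as the orbit of a single positive transfer operator and then prove a renewal/Perron--Frobenius statement for it. Conditioning on the last coordinate pair $(a_n,b_n)$ gives the recursion $\mathcal N_{n+1}(x)=\mathcal N_n\!\left(\frac{x+1}{\beta}\right)+2\mathcal N_n\!\left(\frac{x}{\beta}\right)+\mathcal N_n\!\left(\frac{x-1}{\beta}\right)$, with $\mathcal N_0=\mathbf 1_{\{0\}}$. Since $\beta$, being a root of a $\{-1,0,1\}$ polynomial, is a unit in $\mathbb Z[\beta]$, each of $\frac{x\pm1}{\beta},\frac x\beta$ lies in $\mathbb Z[\beta]$ and $X(\beta)=X_{\{-1,0,1\}}(\beta)\subseteq\mathbb Z[\beta]$; thus $\mathcal N_n=L^n\mathbf 1_{\{0\}}$, where $(Lf)(x)=f(\frac{x+1}\beta)+2f(\frac x\beta)+f(\frac{x-1}\beta)$ acts on functions on $\mathbb Z[\beta]$. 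The whole theorem is then a spectral statement about $L$.

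To extract the spectral data I would use the embedding $\Phi\colon\mathbb Z[\beta]\hookrightarrow V$ into the product of the Archimedean completions attached to the conjugates of $\beta$, under which multiplication by $\beta$ becomes a linear map $T$ (conjugate to the companion matrix of the minimal polynomial, $\det T=\pm1$) with no eigenvalue on the unit circle, so $V=E^s\oplus E^u$ with $T|_{E^s}$ a contraction and $T|_{E^u}$ an expansion. Tracking a representation $x=\sum_{i=1}^n c_i\beta^{n-i}$ by the rescaled remaining target $r_k:=T^{-k}\Phi(x)-\sum_{j<k}T^{-k+j}\Phi(c_{n-j})$, one checks that $r_k$ is automatically bounded in $E^s$, is driven into a fixed bounded region of $E^u$ by the contracting step $r\mapsto T^{-1}(r-\Phi(c))$, and satisfies $r_n=0$; hence, after a finite transient depending only on $x$, $r_k$ lives in the finite set $\mathcal S:=\Phi(\mathbb Z[\beta])\cap B(0,R_0)$. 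Consequently $\mathcal N_n(x)$ equals (up to that transient) the weighted number of length-$n$ walks from an entry vertex to the vertex $0$ in the finite weighted digraph on $\mathcal S$ with edges $r\to T^{-1}(r-\Phi(c))$ of weight $m(0)=2$, $m(\pm1)=1$. Proving finiteness of this ``spectral automaton'' and primitivity of the strongly connected component containing $0$ is precisely what the technical conditions secure in the general hyperbolic case (and is transparent for the golden mean, where $\mathcal S$ lives in a one-dimensional window). Granting it, the renewal theorem applied to the first-return decomposition at $0$ produces $\lambda$ (the Perron eigenvalue of that component) with $\mathcal N_n(0)\sim C_0\lambda^n$, $C_0\in(0,\infty)$; since the entry vertex associated with any $x\in X(\beta)$ reaches $0$, $\lambda^{-n}\mathcal N_n(x)\to\mu(x)=C(x)\in[0,\infty)$, and $\mu(x)>0$ because splicing a zero-sum word (built from $p(\beta)=0$) ahead of any length-$m$ representation of $x$ gives $\mathcal N_n(x)\ge\mathcal N_{n-m}(0)\ge c\lambda^{n-m}$. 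The bound $\lambda>1$ is already visible from the all-equal words, which give $\mathcal N_n(0)\ge 2^n$.

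It remains to show $\lambda<4$ and to deduce infinite total mass. For the former, write $\mathcal N_n(x)=4^n\,\mathbb P(S_n=x)$ with $S_n=\sum_{i=1}^n(a_i-b_i)\beta^{n-i}$; each summand takes three equally spaced values at spacing $\beta^{n-i}\ge1$ with maximal atom $\tfrac12$, so its L\'evy concentration function at scale $\tfrac12$ equals $\tfrac12$, and Kolmogorov--Rogozin gives $\sup_x\mathbb P\big(S_n\in[x-\tfrac14,x+\tfrac14]\big)\le C/\sqrt n$, whence $\mathcal N_n(x)\le (C/\sqrt n)\,4^n$; together with $\mathcal N_n(x)\sim C(x)\lambda^n$, $C(x)>0$, this forces $\lambda\le4$ and excludes $\lambda=4$. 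For infinite total mass, pass to the limit in the recursion to get the functional equation $\lambda\mu(x)=\mu(\frac{x+1}\beta)+2\mu(\frac x\beta)+\mu(\frac{x-1}\beta)$, valid for all $x\in\mathbb Z[\beta]$ (with $\mu\equiv0$ off $X(\beta)$), and sum it over $x\in\mathbb Z[\beta]$: since $\beta$ is a unit each of the three maps $x\mapsto\frac{x+1}\beta,\ \frac x\beta,\ \frac{x-1}\beta$ is a bijection of $\mathbb Z[\beta]$, so with $M:=\sum_{x}\mu(x)=\mu(\mathbb R)$ one obtains $\lambda M=4M$; as $\lambda\neq4$ this gives $M\in\{0,\infty\}$, and $M>0$ because $\mu(0)=C_0>0$, hence $M=\infty$.

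The main obstacle is Step 2: controlling $L$ in the general hyperbolic setting, i.e.\ establishing that the spectral automaton on $\mathcal S$ is genuinely finite and that the component through $0$ is primitive --- the latter is what upgrades $\limsup$ to an honest limit and forces the limit to be strictly positive. In the Pisot/golden case the ``bounded window'' is immediate; in general it must be replaced by the hyperbolic splitting $V=E^s\oplus E^u$ together with the checkable hypotheses, and the entry-vertex bookkeeping for unbounded $x\in X(\beta)$ requires the finite-transient argument indicated above.
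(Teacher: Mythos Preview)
Your overall architecture---transfer operator, lift to the product of Archimedean places, reduction to a finite weighted automaton---matches the paper's, and your arguments for $\lambda>1$ (from $\mathcal N_n(0)\ge 2^n$), for $\mu(x)>0$ (splicing), and for infinite total mass (summing the functional equation, using that $\beta$ is a unit) are all fine. Your Kolmogorov--Rogozin argument for $\lambda<4$ is correct and is a genuinely different route from the paper's: the paper instead observes that $L$ preserves total mass (multiplying it by $4$), restricts to the finite trapping region $\bar X_1(\beta)$, and notes that mass strictly leaks out, so the spectral radius on that finite block is $<4$. Your approach is probabilistically cleaner but invokes a named inequality; theirs is more elementary.

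There is, however, a real gap in your convergence step. You say that primitivity of the strongly connected component through $0$, together with the renewal theorem, yields $\lambda^{-n}\mathcal N_n(x)\to C(x)\in[0,\infty)$ for every $x\in X(\beta)$. Primitivity of that one component is not enough: you also need that no other irreducible component, lying on a path between $0$ and the entry vertex of $x$, has spectral radius $\ge\lambda$. If such a component existed, $\mathcal N_n(x)$ would grow faster than $\lambda^n$ and the limit would be $+\infty$. You assert that ``technical conditions'' secure this, but in fact the paper proves Theorem~\ref{Thm1} \emph{unconditionally} for hyperbolic $\beta$; Condition~\ref{condition 1} is only needed for Theorem~\ref{Thm3}.

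The missing ingredient, which does all the work, is a one-line Cauchy--Schwarz bound: writing $\mu_n'(y)=\#\{a\in\{0,1\}^n:\sum a_i\beta^{n-i}=y\}$, one has $\mathcal N_n(x)=\sum_y \mu_n'(y)\mu_n'(y+x)\le \sum_y \mu_n'(y)^2=\mathcal N_n(0)$ for every $x$ and every $n$. This immediately forces every irreducible component of the finite transition matrix reachable from the $0$-vertex to have spectral radius $\le\lambda$, and a routine (non-irreducible) Perron--Frobenius argument then gives the honest limit. Once you add this observation your outline goes through; without it the convergence claim is not justified.
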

In the case that $\beta$ has other Galois conjugates of absolute value larger than one, we prove this theorem by lifting to a measure $\bar{\mu}$ supported on a higher dimensional Delone set, whose projection onto the first coordinate gives $\mu$.

Our second theorem gives an explicit way to calculate $\mu(x)$ using any code of $x$.
\begin{theorem}\label{Thm2}
Let $\beta$ be hyperbolic. There exist a natural number $k$, a $1\times k$ vector $W$, and three $k\times k$ matrices $M_{-1}, M_0$ and $M_1$ such that for any $x\in X(\beta)$ and $c_1\cdots c_n\in\{-1,0,1\}^n$ with $x=\sum_{i=1}^n c_i\beta^{n-i}$, 
\[
\mu(x)=\frac{1}{\lambda^n}(WM_{c_1}\cdots M_{c_n})_1.
\]
Here $(WM_{c_1}\cdots M_{c_n})_1$ denotes the first entry of the row vector $WM_{c_1}\cdots M_{c_n}$.
\end{theorem}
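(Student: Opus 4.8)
The plan is to build an explicit finite-state automaton (a transducer) that reads a code $c_1\cdots c_n\in\{-1,0,1\}^n$ of $x$ and, in doing so, tracks exactly the data needed to reconstruct $\mathcal N_n(x)$. First I would set up the correct state space. Writing $x=\sum_{i=1}^n c_i\beta^{n-i}$, observe that any representation $x=\sum_{i=1}^n (a_i-b_i)\beta^{n-i}$ with $a_i,b_i\in\{0,1\}$ is equivalent to $\sum_{i=1}^n (c_i-(a_i-b_i))\beta^{n-i}=0$, i.e. to a $\{-2,-1,0,1,2\}$ representation of zero whose $i$-th digit lies in $c_i-\{-1,0,1\}$. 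Reading digits from left (high powers) to right, after processing $c_1\cdots c_j$ the relevant ``carry'' is the algebraic integer $r_j=\sum_{i=1}^j (c_i-(a_i-b_i))\beta^{j-i}$; what matters for whether this can be completed to zero is $r_j$ together with how many $(a_i,b_i)$-choices realise each value of $r_j$. Because $\beta$ is hyperbolic, the set of algebraic integers $r$ that can ever occur as such a carry and still be completable to $0$ is \emph{finite}: this is exactly the uniform-discreteness / Garsia-type separation input, applied in the contracting Galois directions to bound $r$ and in the expanding directions (where $r_j$ would blow up unless it stays small) to bound it there too. So the reachable, still-viable carries form a finite set $\{\rho_1,\dots,\rho_k\}$, which is our state space, and $k$ is the promised natural number.

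Next I would define the matrices. For $c\in\{-1,0,1\}$ let $(M_c)_{\ell m}$ be the number of pairs $(a,b)\in\{0,1\}^2$ such that, from carry-state $\rho_\ell$, choosing digit $c-(a-b)$ leads to carry-state $\rho_m$; concretely the new carry is $\beta\rho_\ell + (c-(a-b))$ when this lands in $\{\rho_1,\dots,\rho_k\}$ (and the transition is forbidden otherwise, contributing $0$). Thus $M_c$ has nonnegative integer entries, each row sum at most $4$. Let the initial state $\rho_1$ be the carry $0$ (before any digit is read) and take $W$ to be the indicator row vector $e_1$ of that state. Then a straightforward induction on $n$ shows that $(WM_{c_1}\cdots M_{c_n})_m$ counts the number of digit-choice sequences $(a_1,b_1),\dots,(a_n,b_n)$ driving the automaton from $\rho_1$ to $\rho_m$; summing the count over the unique terminal state $\rho_m=0$ (``carry zero at the end'') gives $\mathcal N_n(x)$. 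Arranging so that state $1$ is the zero-carry state — which is both initial and accepting — one gets $\mathcal N_n(x)=(WM_{c_1}\cdots M_{c_n})_1$, and dividing by $\lambda^n$ and letting $n\to\infty$ via Theorem~\ref{Thm1} yields the stated formula for $\mu(x)$. (One subtlety: the identity must be code-independent, but this is automatic since $\mathcal N_n(x)$ depends only on $x$, not on the chosen code; any code with $\sum c_i\beta^{n-i}=x$ gives the same product's first entry.)

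The main obstacle is the finiteness of the carry set — equivalently, showing the automaton is genuinely finite. In the Pisot case this is classical (bounded remainders / Frougny–Solomyak-type arguments), but for general hyperbolic $\beta$ with conjugates on \emph{both} sides of the unit circle one must argue on the full Minkowski embedding: the contracting conjugates keep $|r_j|$ bounded on their own, while in the expanding conjugates one uses that a carry with a large expanding-coordinate can never be driven back to $0$ by the remaining bounded-digit tail, so such states are simply not in the (pruned) reachable-and-viable set. This pruning is where the ``hyperbolic'' hypothesis and the section~\ref{GeneralSec} cut-and-project machinery do the real work, and I expect it to mirror the construction already used to prove Theorem~\ref{Thm1}; indeed $\lambda$ should emerge as the Perron eigenvalue of $M_{-1}+M_0+M_1$ restricted to the relevant irreducible component, tying the two theorems together. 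The remaining steps — verifying the nonnegativity and row-sum bounds on $M_c$, the inductive counting identity, and code-independence — are routine bookkeeping.
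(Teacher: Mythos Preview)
Your automaton construction is essentially the paper's: the carry states $\rho_\ell$ are exactly the elements of the finite set $\Delta$ in Section~\ref{GeneralSec}, and your $M_c$ are the matrices $A_c$ of Definition~\ref{Ai}. The finiteness argument via hyperbolicity is also correct in spirit.

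However, there is a genuine gap in the passage from $\mathcal N_n(x)$ to $\mu(x)$. With your choice $W=e_1$ you obtain
\[
\frac{1}{\lambda^n}(WM_{c_1}\cdots M_{c_n})_1=\frac{\mathcal N_n(x)}{\lambda^n},
\]
which \emph{converges} to $\mu(x)$ as $n\to\infty$ but is not \emph{equal} to $\mu(x)$ for a fixed code of a fixed length $n$. The theorem asserts an exact identity valid for \emph{every} code of \emph{every} length, so $W=e_1$ cannot be the right vector; indeed your own code-independence remark only covers codes of the same length and says nothing about why $\tfrac{1}{\lambda^n}\mathcal N_n(x)$ should equal $\tfrac{1}{\lambda^m}\mathcal N_m(x)$ for $m\neq n$ (it doesn't). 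The step you are missing is the one carried out in the proof of Theorem~\ref{GoldenThm2} and Proposition~\ref{prop2}: pad the code by prepending zeros, so that $\mathcal N_{n+k}(x)=(e_1 M_0^k M_{c_1}\cdots M_{c_n})_1$, and then pass to the limit in $k$ using $\tfrac{1}{\lambda^k}e_1 M_0^k\to W$, where $W$ is the left Perron eigenvector of $M_0$ (equivalently, $W=(\mu(0),\mu(v_2),\dots,\mu(v_k))$). That eigenvector, not $e_1$, is the $W$ in the statement.

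A smaller point: $\lambda$ is the Perron eigenvalue of $M_0$ alone (equivalently of $\Lambda_1$), not of $M_{-1}+M_0+M_1$ as you suggest at the end.
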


In fact the vector $WM_{c_1}\cdots M_{c_n}$ also holds information on the values of $\mu(y)$ for other values of $y\in X(\beta)$. There is a set of translations $d_1,\cdots, d_k\in\mathbb R$, with $d_1=0$, such that, for $x=\sum_{i=1}^n c_i\beta^{n-i}$,  
\[
\frac{\mu(x+d_i)}{\mu(x)}=\frac{(WM_{c_1}\cdots M_{c_n})_i}{(WM_{c_1}\cdots M_{c_n})_1}.
\]
This suggests that one may be able to use a dynamical system to move through the measure $\mu$ to calculate its values at different points. We can do this, but we need first to replace the dependence of $\mu(x)$ on the coding of $x$ with a dependence on the position of a point $x_c$ corresponding to $x$ in the `contracting space'. To describe this, we must first describe a geometric construction related to $\beta$-expansions in algebraic bases.

Let $\beta$ have Galois conjugates $\beta_2\cdots \beta_d$ of absolute value larger than one and Galois conjugates $\beta_{d+1}\cdots \beta_{d+s}$ of absolute value smaller than one. Define the contracting space $\mathbb K_c$ by $\mathbb K_c=\mathbb F_{d+1}\times \mathbb F_{d+2}\times \cdots \times \mathbb F_{d+s}$ where $\mathbb F_k=\mathbb R$ if $\beta_k\in\mathbb R$, $\mathbb F_k=\mathbb C$ if $\beta_k\in\mathbb C\backslash\mathbb{R}$. Then, for $i\in \{-1,0,1\}$ define the contraction $S_i$ on $\mathbb K_c$ by
\[
S_i(x_{d+1},\cdots, x_{d+s})=(\beta_{d+1}x_{d+1}+i,\cdots, \beta_{d+s}x_{d+s}+i). 
\]
The maps $\{S_{-1},S_0,S_1\}$ form an iterated function system on $\mathbb K_c$ with an attractor that we denote $\mathcal R$. This is a standard construction in numeration/tiling theory, although it is more usual to consider a sub-IFS using only those codes which correspond to greedy $\beta$-expansions \cite{AkiyamaTiling}. To each point $x=\sum_{i=1}^n c_i\beta^{n-i}$ there exists a corresponding point in the contracting space:
\[
x_c=\sum_{i=1}^n c_i(\beta_{d+1}^{n-i}, \beta_{d+2}^{n-i},\cdots, \beta_{d+s}^{n-i})=S_{c_n}\circ \cdots S_{c_1}(0)\in \mathcal R.
\]
It is important to stress that  the point $x_c$ corresponding to $x$ is independent of the coding $c_1,\cdots,c_n$ of $x$, this holds since $\beta_{d+1}\cdots \beta_{d+s}$ are Galois conjugates of $\beta$.

\begin{theorem}\label{Thm3}
Assume that Condition \ref{condition 1} holds. There exists a set $\Delta=(v_1,\cdots v_k)$ of translations such that for any $j\in\{1\cdots k\}$ there is a function $f_j:\mathcal R\to \mathbb R$ such that for any $x\in X(\beta)$ with $x+v_j$ also in $X(\beta)$ we have
\[
\ln\left(\frac{\mu(x+v_j)}{\mu(x)}\right)=f_j(x_c).
\]
Furthermore any $x\in X(\beta)$ can be reached from $0$ by applying a finite number of translations from $\Delta$. There exists a word $w$ and constants $C_1>0$, $C_2\in(0,1)$ such that for any $a_1\cdots a_n\in\{-1,0,1\}^n$ which contains $r$ non-overlapping copies of the word $w$, $f_j$ varies by at most $C_1C_2^{r-1}$ on $S_{a_1}\circ \cdots \circ S_{a_n}(\mathcal R)$.
\end{theorem}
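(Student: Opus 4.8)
The idea is to read Theorem~\ref{Thm2} projectively. For $x=\sum_{i=1}^{n}c_i\beta^{n-i}\in X(\beta)$ put $u(x)=WM_{c_1}\cdots M_{c_n}$ and let $\hat u(x)=u(x)/(u(x))_1$ be its normalisation to first coordinate $1$; by Theorem~\ref{Thm2} one has $\mu(x+d_i)/\mu(x)=(\hat u(x))_i$ whenever $x+d_i\in X(\beta)$, and these entries do not depend on the chosen coding. Appending a symbol $c_{n+1}$ sends $x\mapsto\beta x+c_{n+1}$, $x_c\mapsto S_{c_{n+1}}(x_c)$ and $u(x)\mapsto u(x)M_{c_{n+1}}$, so the contracting coordinate and the state vector are driven by the \emph{same} one-sided action of the alphabet, and the proof is a comparison of the two through the contraction of $M_{-1},M_0,M_1$ acting on the right of the projective simplex. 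Observe too that a hyperbolic $\beta$ that is a root of a $\{-1,0,1\}$ polynomial has a Galois conjugate of modulus $<1$: otherwise the minimal polynomial $m$ of $\beta$ satisfies $|m(0)|\ge 2$, yet $m$ divides every nonzero $\{-1,0,1\}$ polynomial vanishing at $\beta$, so $m(0)$ divides the constant term of such a polynomial, which equals $\pm1$ once the largest power of $x$ is cancelled. Hence $\mathbb Z[\beta]\to\mathbb K_c$, and so $x\mapsto x_c$ on $X(\beta)$, is injective, and the first assertion is really the statement that $x\mapsto\ln(\mu(x+v_j)/\mu(x))$ extends to a function of $x_c$ which is continuous off a small exceptional subset of $\mathcal R$ --- the quantitative form of this being the final sentence.

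\textbf{Translations and connectivity.} Take $\Delta=(v_1,\dots,v_k)$ to be the finite set of first coordinates of the return vectors of the higher-dimensional Delone set $\overline{X}\subset\mathbb R\times\mathbb K_e$ carrying $\overline{\mu}$ (in the Pisot case $\mathbb K_e$ is trivial and these are simply the finitely many difference vectors between nearby points of $X(\beta)$); finiteness uses finite local complexity of $\overline{X}$, part of the construction behind Theorems~\ref{Thm1} and~\ref{Thm2}. Relative denseness of $\overline{X}$ makes the graph on $\overline{X}$ joining two points whose difference is one of these return vectors connected (cover a segment between two points by balls of the covering radius and hop between points of $\overline{X}$ in consecutive balls), and since $x\mapsto(x,x_e)$ is a bijection $X(\beta)\to\overline{X}$ carrying translation by $v_j$ to translation by a fixed return vector, this passes to a connected graph on $X(\beta)$ whose edges are $\Delta$-translations; thus every $x\in X(\beta)$ is reached from $0$ by finitely many such translations. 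Writing $v_j=d_{i(j)}$, Theorem~\ref{Thm2} gives $\ln(\mu(x+v_j)/\mu(x))=\ln(\hat u(x))_{i(j)}$, which is coding-independent, hence by injectivity a genuine function of $x_c$ on $D:=\{x_c:x\in X(\beta),\ x+v_j\in X(\beta)\}$.

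\textbf{The core estimate.} The combinatorial input is a finite word $w=w_1\cdots w_m$ for which the reverse-order product $M_{w_m}M_{w_{m-1}}\cdots M_{w_1}$ is \emph{strictly positive}; producing it is where the structure of the automaton behind Theorems~\ref{Thm1} and~\ref{Thm2}, together with Condition~\ref{condition 1}, is used. Granting this, let $\tau\in(0,1)$ be the Birkhoff contraction coefficient of that positive block and $D_0<\infty$ the Hilbert-metric diameter of the cone into which it maps the nonnegative orthant; recall that for the Hilbert metric $d_H$ every nonnegative matrix acts as a $1$-Lipschitz contraction, a strictly positive one as a $\tau$-contraction, and $|\ln(p_i/p_1)-\ln(q_i/q_1)|\le d_H(p,q)$ for positive $p,q$. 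A point of $S_{a_1}\circ\cdots\circ S_{a_n}(\mathcal R)$ of the form $x_c$ with $x\in X(\beta)$ having a coding whose final block, read in order, is $a_n a_{n-1}\cdots a_1$ satisfies $\hat u(x)=\mathrm{normalise}\!\big(v(x)\,M_{a_n}\cdots M_{a_1}\big)$, where $v(x)$ is a nonnegative nonzero row vector built from the remaining symbols. If $a_1\cdots a_n$ contains $r$ non-overlapping copies of $w$, then $M_{a_n}\cdots M_{a_1}$ contains $r$ non-overlapping copies of the positive block, so the map $v\mapsto\mathrm{normalise}(v\,M_{a_n}\cdots M_{a_1})$ sends everything into the cone after the first copy and contracts by $\tau$ at each of the remaining $r-1$, so its image has $d_H$-diameter at most $D_0\tau^{\,r-1}$. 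The logarithmic inequality then yields $|f_j(r')-f_j(r'')|\le D_0\tau^{\,r-1}$ for such points $r',r''$ in the cylinder, i.e.\ the asserted bound with $C_1=D_0$, $C_2=\tau$. As these points are dense in the cylinder and, shrinking cylinders, the same estimate forces $x_c\mapsto\ln(\hat u(x))_{i(j)}$ to be uniformly continuous on $D$ away from the set of $x_c$ whose addresses fail to recur to $w$ (which Condition~\ref{condition 1} controls), the function extends to the desired $f_j:\mathcal R\to\mathbb R$ and the variation bound holds on the whole cylinder.

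\textbf{Main obstacle.} The projective bookkeeping and the transfer of connectivity from $\overline{X}$ to $X(\beta)$ are routine. The real difficulty is the existence of the word $w$ with a strictly positive matrix: $M_{-1},M_0,M_1$ are not positive and need not even be primitive as a family, so one must extract from the construction in Theorems~\ref{Thm1} and~\ref{Thm2}, and from Condition~\ref{condition 1}, enough about the directed graph they encode --- strong connectivity and aperiodicity of its essential component, hence a strictly positive finite product, together with recurrence of the corresponding word along the addresses that actually occur, so that the exceptional set (addresses avoiding $w$ eventually, where $f_j$ can be only measurably defined) is negligible. This is the weighted analogue of primitivity of a substitution matrix, and it genuinely requires Condition~\ref{condition 1} rather than being automatic.
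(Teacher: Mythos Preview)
Your overall architecture---read Theorem~\ref{Thm2} projectively, use a mixing word to contract in the Hilbert metric, and link cylinders in $\mathcal R$ to suffixes of codings---matches the paper's. But there are two genuine gaps in the execution.

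\textbf{The product is never strictly positive.} You assert that some reverse product $M_{w_m}\cdots M_{w_1}$ is strictly positive, and then invoke standard Birkhoff contraction. In fact no such word exists in general: the set $\Delta$ indexing the rows and columns contains elements which, under a given digit $i$, have no successor in $\Delta$, so the corresponding row of $A_i$ is identically zero, and this persists in all products beginning with $A_i$. What the paper proves (Proposition~\ref{magic wordPisot}) is the weaker statement that there is a word $w$ and sets $I,J$ not containing the index of $0$ such that $(A_w)_{i,j}=0$ if and only if $i\in I$ or $j\in J$: a positive block together with some zero rows and zero columns. The contraction argument then has to be done for this structure, not for a strictly positive matrix; this is Lemma~\ref{MatrixCondition}/Proposition~\ref{ConPro}, which uses the convention $\ln 0-\ln 0=0$ in the projective metric and shows that the first occurrence of $A_w$ collapses any two vectors to finite distance (because it kills the coordinates in $J$ and makes the rest positive), while subsequent occurrences contract by a fixed factor. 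Your ``essential component'' remark hints at this, but the claim of strict positivity is incorrect and the modified Birkhoff argument is not a formality---it is exactly where the zero rows and columns must be tracked.

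\textbf{Condition~\ref{condition 1} is misplaced.} You invoke Condition~\ref{condition 1} to produce the mixing word and to control recurrence. In the paper the mixing word (Proposition~\ref{magic wordPisot}) uses only the definition of $\Delta$ via Lemma~\ref{Matrices and dynamics} and needs no hypothesis. Condition~\ref{condition 1} enters at a different point: it is what guarantees (Proposition~\ref{cylinder gives code Pisot}) that \emph{every} $x\in X(\beta)$ with $x_c\in[a_1,\ldots,a_n]^{\mathrm o}$ actually possesses a coding whose terminal block is $a_n\cdots a_1$. Without this, your core estimate only compares those $x,y$ which happen to have such codings, and you cannot conclude that $f_j$ is well defined on, or has bounded variation over, the whole cylinder. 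Your density/extension step implicitly assumes this link between position in $\mathcal R$ and admissible suffixes, which is precisely the content of Condition~\ref{condition 1}.

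Your connectivity argument via return vectors of the Delone set is different from the paper's, which instead shows algebraically that $\bar\beta^0,\ldots,\bar\beta^{m-1}$ lie in the subgroup generated by $\Delta$; either route is acceptable, though you should check that your ``return vectors'' coincide with the paper's $\Delta$ (defined through future coincidence under the maps $T_i$, not merely through proximity).
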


The final condition on the variation of $f_j$ gives rise to the following continuity properties of $f_j$.
\begin{enumerate}
\item {\bf Continuity almost everywhere:} For any fully supported ergodic measure $\nu$ on $\mathcal R$, each $f_j$ is continuous $\nu$-almost everywhere
\item {\bf Continuity at most lattice points:} For any fully supported measure $m$ on $\{-1,0,1\}$ and any $\epsilon>0$ there exists $n\in\mathbb{N}$ and $D\subseteq\{-1,0,1\}^n$ such that $m^n(D)> 1-\e$ and
  
  \[
  |f_j(x)-f_j(y)|<\e
  \]
  for all $x,y\in X(\beta)$ with $x_c, y_c\in S_{a_1}\circ\cdots \circ S_{a_n}(\mathcal R)$ for any $a_1\cdots a_n\in D$.
  

\end{enumerate}
These latter two continuity properties follow since $\nu$ almost every sequence contains infinitely many copies of the word $w$, and that for any $r$ and any $\epsilon>0$ there exists $n$ such that a proportion at least $1-\epsilon$ of $\{-1,0,1\}$ words of length $n$ contain $r$ non-overlapping occurences of $w$.

We use this theorem extensively in our follow up article. For now, we limit our application of this theorem to the golden mean case, where we show that the values of $\mu(x)$ can be obtained via a cocycle over an interval exchange transformation on $\mathcal R=(-\phi^2,\phi^2)$, see Theorem \ref{GMCocycle}.


In Section \ref{BCSection} we describe some links with the dimension theory of Bernoulli convolutions, which allows us to state some new conjectures about Bernoulli convolutions. In Section \ref{GM} we prove Theorems \ref{Thm1}, \ref{Thm2} and \ref{Thm3} in the special case that $\beta$ is the golden mean. Finally in Section \ref{GeneralSec} we prove these theorems for the general case of hyperbolic $\beta$.

\section{Links to the Dimension Theory of Bernoulli Convolutions}\label{BCSection}
Our interest in the measures $\mu$ measures stems from a link with the study of the dimension and possible absolute continuity of Bernoulli convolutions $\nu_{\beta}$, defined below. We describe here connections with dimension theory for Pisot numbers, links between our work and the question of absolute continuity of $\nu_{\beta}$ for non-Pisot hyperbolic $\beta$ are postponed to a follow up article, in which we generalise \cite{CountingBeta} to give a condition for the absolute continuity of $\nu_{\beta}$ in terms of the growth of $\mu_n([\frac{-1}{\beta-1},\frac{1}{\beta-1}])$, which in turn can be stated in terms of rapid equidistribution to Lebesgue measure of the measures $\mu_n|_{[\frac{-1}{\beta-1},\frac{1}{\beta-1}]}$. We then use the local structure of the measures $\mu_n$ described in Theorem \ref{Thm3} and an analogue of Theorem \ref{GMCocycle} to study this equidistribution.

Given a number $\beta\in(1,2)$, the Bernoulli convolution $\nu_{\beta}$ is the weak$^*$ limit of the measures $\nu_{\beta,n}$ given by
\[
\nu_{\beta,n}=\sum_{a_1\cdots a_n\in\{0,1\}^n} \frac{1}{2^n}\delta_{\sum_{i=1}^na_i\beta^{-i}}
\]
where $\delta_x$ denotes the Dirac probability measure on $x$. The measure $\nu_{\beta}$ is a probability measure on $[0,\frac{1}{\beta-1}]$ and is perhaps the simplest example of a self-similar measure with overlaps. The question of whether $\nu_{\beta}$ is absolutely continuous for some given parameter $\beta$ goes back to Jessen and Wintner \cite{JessenWintner}. Erd\H{o}s showed that $\nu_{\beta}$ is singular when $\beta$ is a Pisot number \cite{ErdosPisot}, and indeed Garsia showed that such Bernoulli convolutions have dimension less than one \cite{GarsiaEntropy}. There has been very substantial progress on the dimension theory of Bernoulli convolutions in the last decade, stemming from the work of Hochman \cite{HochmanInverse}, and in particular it is now known that non-algebraic $\beta$ give rise to Bernoulli convolutions of dimension one \cite{VarjuTranscendental}, whereas for algebraic $\beta$ there are algorithms to determine whether or not $\nu_{\beta}$ has dimension one \cite{VarjuBreuillard1, AFKP}. For a summary of recent research into the dimension theory of Bernoulli Convolutions see \cite{VarjuSummary}. 

There have been many numerical studies into the dimensions of Bernoulli Convolutions associated with Pisot numbers. The evidence we have suggests that for Pisot numbers of large degree the dimension of the corresponding Bernoulli convolution is close to one \cite{AFKP, HKPS, HS1, HS2, KPV}. We formalise this conjecture here.
\begin{conjecture}\label{PisotDimConjecture}
Let $\beta_n$ be a sequence of Pisot numbers in the interval $(1,2)$ and suppose that the degree of $\beta_n$ tends to infinity as $n\to\infty$. Then \[\dim_H(\nu_{\beta_n})\to 1.\]
\end{conjecture}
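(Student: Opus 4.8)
This is a conjecture, not a theorem, so what follows is a line of attack rather than a proof; I expect the key step to be genuinely difficult. The plan is to pass from dimension to entropy, reinterpret the entropy in terms of how evenly the random $\beta$-sums spread over the spectrum, and then exploit the fact that a Pisot number of large degree has most of its Galois conjugates close to the unit circle. For the first reduction: when $\beta$ is Pisot, Garsia's separation lemma gives $\left|\sum_{i=1}^n(a_i-b_i)\beta^{n-i}\right|\ge c(\beta)>0$ whenever this is nonzero, so in particular there are no super‑exponentially close overlaps. Hence, by Hochman's theorem together with the Breuillard--Varj\'u / Varj\'u identification of dimension with entropy, $\dim_H(\nu_\beta)=h(\beta)/\log\beta$, where $h(\beta)$ is the Garsia entropy. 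Since the Pisot numbers are bounded away from $1$ (the smallest being $\approx 1.3247$), $\log\beta_n$ stays in a fixed compact subinterval of $(0,\log 2]$, and the conjecture becomes equivalent to the statement that the entropy gap $\log\beta_n-h(\beta_n)$ tends to $0$.

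Next I would reformulate the gap. Writing $N_n(x)=\#\{a\in\{0,1\}^n:\sum_i a_i\beta^{-i}=x\}$ and $s_n$ for the size of the support of the distribution of $\sum_i a_i\beta^{-i}$, one has $h(\beta)=\log 2-\lim_n\tfrac1n\mathbb{E}[\log N_n]$, and by convexity of $t\log t$, $\tfrac1n\mathbb{E}[\log N_n]\ge\log 2-\tfrac1n\log s_n$. For Pisot $\beta$ the $\{0,1\}$-spectrum is a Delone set, so $s_n\asymp\beta^n$ and $\tfrac1n\log s_n\to\log\beta$; thus the entropy gap closes in the limit precisely when the random point $\sum_i a_i\beta^{-i}$ becomes asymptotically equidistributed among the uniformly‑discrete, relatively‑dense set of values it can take. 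So the conjecture is equivalent to: \emph{uniformly over Pisot $\beta$ of degree at least $d$, the distribution of $\sum_i a_i\beta^{-i}$ equidistributes on its support as $n\to\infty$, uniformly as $d\to\infty$.} This is exactly the sort of statement that the local structure of the measures $\mu_n$ is designed to address.

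The third step is to feed in Theorem \ref{Thm3} (and its two‑sided analogue for the measures $\mu_n$): for Pisot $\beta$ the contracting space $\mathbb{K}_c$ is the entire Galois‑conjugate space, $\mathcal R$ is the compact Rauzy‑type window, and the local behaviour of $\mu$ is governed by the functions $f_j$ on $\mathcal R$ via $\ln(\mu(x+v_j)/\mu(x))=f_j(x_c)$, with variation of $f_j$ controlled by the contraction ratios $|\beta_{d+1}|,\dots,|\beta_{d+s}|$. The number‑theoretic input I would use is this: a Pisot $\beta\in(1,2)$ is necessarily a unit (the constant term of its minimal polynomial is a nonzero integer of modulus $\beta\prod_{i\ge 2}|\beta_i|<2$, hence $\pm1$), so $\prod_{i\ge 2}|\beta_i|=1/\beta$ with every factor $<1$; consequently, for any fixed $\rho<1$ the number of conjugates with $|\beta_i|<\rho$ is fewer than $\log 2/\log(1/\rho)$, a bound independent of the degree $d$. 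Hence for large $d$ almost all conjugates lie in the thin annulus $\rho<|z|<1$: the IFS $\{S_{-1},S_0,S_1\}$ contracts only very weakly in almost every coordinate, $\mathcal R$ is a large ``fat'' window, and one expects the $f_j$ — hence $\mu$, hence the one‑sided distribution — to be nearly constant on their supports, the deviation being controlled by the boundedly many strongly‑contracting directions. Quantifying this flattening and converting it into the equidistribution of the previous paragraph would close the argument.

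The hard part is making the third step quantitative and truly uniform in $d$. Weak contraction in most coordinates does not on its own force equidistribution: one must rule out that the finitely many strongly‑contracting directions, or subtle resonances among the near‑circle conjugates, keep $\mu$ concentrated on a sub‑Delone set; and one must control the implicit constants — the word $w$ and $C_1,C_2$ of Theorem \ref{Thm3}, and the constant in $s_n\asymp\beta^n$ — uniformly over \emph{all} Pisot $\beta$ of a given degree, a uniformity that the present proofs, which fix $\beta$ throughout, do not deliver. A workable argument will presumably need a more robust, $\beta$-uniform version of the equidistribution machinery developed in the follow‑up article, perhaps combined with an averaging over the many near‑circle conjugates in the spirit of the transcendental case of \cite{VarjuTranscendental}.
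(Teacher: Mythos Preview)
This is a conjecture, and the paper does not prove it either. What the paper does is reduce it to a second conjecture: it proves in Theorem~\ref{EQDimLink} that Conjecture~\ref{PisotDimConjecture} follows from Conjecture~\ref{DimensionConjecture}, which asserts that the autocorrelation measures $\mu_{I_{\beta_n}}$ converge to normalised Lebesgue on $I_{\beta_n}$ in Wasserstein distance. The mechanism is a \emph{lower} bound on $H$: Jensen applied to $\mathcal N_n(0)$ gives $H(\beta)\ge\log 4-\log\lambda$ (inequality~\eqref{LambdaLowerBound}); the eigenvector identity of Lemma~\ref{LALemma} then yields $\lambda=\int_{I_\beta}g_\beta\,d\mu_{I_\beta}$ with $\int g_\beta\,d\mathcal L_{I_\beta}=4/\beta$; so Wasserstein closeness of $\mu_{I_\beta}$ to Lebesgue forces $\lambda$ close to $4/\beta$ and hence $H(\beta)$ close to $\log\beta$.

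Your route is a genuinely different reduction. You recast the gap $\log\beta-H(\beta)$ as the failure of $\nu_{\beta,n}$ to be uniform on its support (equivalently, the slack in the bound $H(\beta)\le\lim_n\tfrac1n\log s_n$), so that the conjecture becomes an equidistribution statement for the one-sided measures $\nu_{\beta,n}$ themselves rather than for their autocorrelation $\mu_{I_\beta}$. Both reductions are legitimate heuristics; the paper's has the advantage of being a proved one-way implication about a single limit measure on a fixed interval, whereas yours concerns the whole family $(\nu_{\beta,n})_n$ and the ``precisely when'' is only heuristic --- note that your displayed Jensen inequality is the \emph{upper} bound $H(\beta)\le\log\beta$, so the needed lower bound has to come separately from ``near-uniform implies entropy near maximal'', which must then be made quantitative uniformly in $n$ and $\beta$. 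On the other hand, your third step supplies a mechanism the paper does not: the observation that a Pisot $\beta\in(1,2)$ is a unit, so $\prod_{i\ge2}|\beta_i|=1/\beta$ forces all but boundedly many conjugates into any fixed annulus $\{\rho<|z|<1\}$, making the contracting IFS only weakly contracting in most coordinates for large degree. The paper rests on the numerics of Table~\ref{tab:table1} instead. Your observation could plausibly be aimed at Conjecture~\ref{DimensionConjecture} as well, since $\mu_{I_\beta}$ and the functions $f_j$ of Theorem~\ref{Thm3} are built from that same IFS; but, as you acknowledge, the quantitative uniformity over all Pisot $\beta$ of a given degree is the genuinely hard step in either formulation.
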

We have not seen this conjecture formally stated before, but it seems consistent with the (admittedly fairly limited) numerical evidence that we have.

The rest of this section is devoted to giving another conjecture on the measures $\mu_n$ and showing that this new conjecture would be sufficient to prove Conjecture \ref{PisotDimConjecture}. 

It was proved in Hochman \cite{HochmanInverse} that, for algebraic $\beta$ the dimension of the Bernoulli convolution $\nu_{\beta}$ is given by
\[
\dim_H(\nu_{\beta})=\min\left\{1, \frac{H(\beta)}{\log(\beta)}\right\}.
\] 
Here the Garsia entropy $H(\beta)$ is given by
\[
H(\beta):=\lim_{n\to\infty}\frac{1}{n}H_n(\beta)
\]
where
\[
H_n(\beta)=-\sum_{a_1\cdots a_n\in\{0,1\}^n}\frac{1}{2^n}\log\left(\frac{1}{2^n}\#\{b_1\cdots b_n\in\{0,1\}^n: \sum_{i=1}^n (a_i-b_i)\beta^{n-i}=0\}\right).
\]

As noted in \cite{AFKP}, one can use Jensen's inequality to reverse the order of the summation and the log, to get
\begin{eqnarray*}
H_n(\beta)&\geq& -\log\left(\frac{1}{4^n}\#\{a_1\cdots a_n, b_1\cdots b_n\in\{0,1\}^n: \sum_{i=1}^n (a_i-b_i)\beta^{n-i}=0\}\right)\\
&=&\log(4^n)-\log(\mathcal N_n(0)).
\end{eqnarray*}
In particular, our main theorem, Theorem \ref{Thm1}, introduces a constant $\lambda$ equal to the exponential growth rate of $\mathcal N_n(0)$, using this constant we get
\begin{equation}\label{LambdaLowerBound}
H(\beta)\geq \log(4) - \log \lambda.
\end{equation}

Our contribution here in the Pisot case is to link the question of how close to being equidistributed $\mu$ is to the value of $\lambda$, broadly when $\mu|_{[\frac{-1}{\beta-1},\frac{1}{\beta-1}]}$ is well distributed with respect to Lebesgue measure then Equation \ref{LambdaLowerBound} gives a lower bound for the dimension of $\nu_{\beta}$ which is close to one. Our approach here is more or less that of trying to understand something about the maximal eigenvalue of a matrix by studying the corresponding eigenvector. We use the following elementary lemma from linear algebra.
\begin{lemma}\label{LALemma}
Let $M$ be a $k\times k$ matrix with maximal eigenvalue $\rho$ and associated left eigenvector $V=(v_1,\cdots,v_k)$ normalised so that $\sum_{i=1}^k v_i=1$. Let $r_i:=\sum_{j=1}^k M_{i,j}$ denote the $i$th row sum of $M$. Then 
\[
\rho=\sum_{i=1}^k v_ir_i.
\]
\end{lemma}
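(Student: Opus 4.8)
The plan is to exploit the defining relation $VM=\rho V$ componentwise and then sum. First I would write out the $j$th coordinate of the vector identity $VM = \rho V$: since $V$ is a left eigenvector, $(VM)_j = \sum_{i=1}^k v_i M_{i,j} = \rho v_j$ for every $j\in\{1,\dots,k\}$. This is the only structural input needed; the eigenvalue being \emph{maximal} plays no role in the algebra and is presumably included only because that is the eigenvalue relevant to the application (and, via Perron--Frobenius-type considerations elsewhere, the one for which a normalisation $\sum_i v_i = 1$ is natural).

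Next I would sum these $k$ scalar equations over $j$. The right-hand side gives $\sum_{j=1}^k \rho v_j = \rho \sum_{j=1}^k v_j = \rho$, using the normalisation $\sum_j v_j = 1$. The left-hand side gives $\sum_{j=1}^k \sum_{i=1}^k v_i M_{i,j}$, and since the double sum is finite I may interchange the order of summation to obtain $\sum_{i=1}^k v_i \sum_{j=1}^k M_{i,j} = \sum_{i=1}^k v_i r_i$, by the definition $r_i = \sum_{j=1}^k M_{i,j}$ of the $i$th row sum. Equating the two sides yields $\rho = \sum_{i=1}^k v_i r_i$, which is exactly the claim.

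There is essentially no obstacle here: the statement is a one-line consequence of the eigenvector equation together with the chosen normalisation, and the only thing to be careful about is that the normalisation $\sum_i v_i = 1$ is indeed assumed (so that $\sum_j \rho v_j$ collapses to $\rho$ rather than to $\rho\sum_j v_j$). One very minor point worth a remark is that this normalisation is only possible when $\sum_i v_i \neq 0$; in the intended application $M$ will have nonnegative entries and $V$ can be taken with strictly positive entries by Perron--Frobenius, so $\sum_i v_i > 0$ and the normalisation is legitimate. I would state the lemma's proof in two or three sentences and move on.
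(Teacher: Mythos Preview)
Your proof is correct and is the standard one-line argument. The paper itself does not supply a proof at all, introducing the result only as ``the following elementary lemma from linear algebra'' and moving on; your computation (apply $VM=\rho V$ componentwise, sum over $j$, swap the finite double sum, use $\sum_i v_i=1$) is precisely the routine verification one would fill in, and your side remark that maximality of $\rho$ is irrelevant to the algebra is accurate.
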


Let $\beta$ be a Pisot number and $I_{\beta}:=[\frac{-1}{\beta-1},\frac{1}{\beta-1}]$. Then, as noted before, $\lambda$ counts the (weighted) growth of the number of words in $\{-1,0,1\}^n$ for which $\sum_{i=1}^n c_i\beta^{n-i}=0$, the weighting comes from giving each word weight $2^{m}$ where $m$ is the number of occurences of letter $0$ in the word. Whenever $\sum_{i=1}^n c_i\beta^{n-i}=0$ we have that $\sum_{i=1}^m c_i\beta^{m-i}$ is in the interval $I_{\beta}$, and so is in $X(\beta)\cap I_{\beta}$ which is a finite set $V=\{v_1,\cdots,v_k\}$ thanks to the Garsia Separation Property \cite{GarsiaAC}. We write down a matrix $M_0$ indexed by $\{v_1,\cdots ,v_k\}$ with  \[(M_0)_{i,j}=\left\lbrace \begin{array}{cc} 1 & v_j = \beta v_i \pm 1\\
2 & v_j=\beta v_i\\
0 & \mbox{ otherwise} \end{array}\right. .\]

Then the measure $\mu_{I_{\beta}}:=\frac{1}{\mu(I_{\beta})}\mu|_{I_{\beta}}$ gives mass to $v_j$ equal to the $jth$ entry of the left probability eigenvector of $M_0$ associated with maximal eigenvalue $\lambda$. Furthermore, we can read off the $i$th row sum $r_i$ of $M_0$ (associated to point $v_i\in X(\beta)\cap I_{\beta}$) immediately, since we need only know which of $\beta v_i-1, \beta v_i$ and $\beta v_i+1$ lie in $I_{\beta}$. 

Let the function $g_{\beta}:I_{\beta}\to\{1,2,3,4\}$ be given by
\[
g_{\beta}(x)=\chi_{I_{\beta}}(\beta x-1)+2\chi_{I_{\beta}}(\beta x)+\chi_{I_{\beta}}(\beta x+1). 
\]
Then $r_j=g_{\beta}(v_j)$ and so by Lemma \ref{LALemma} we have
\begin{equation}\label{LambdaRowsum}
\lambda=\sum_{v_j\in V}g_{\beta}(v_j)\mu_{I_{\beta}}(v_j)=\int_{I_{\beta}}g_{\beta}(x)d\mu_{I_{\beta}}(x).
\end{equation}
A short calculation gives that if $\mathcal L_{I_{\beta}}$ denotes normalised Lebesgue measure on $I_{\beta}$ then
\[
\int_{I_{\beta}}g_{\beta}(x)d\mathcal L_{I_{\beta}}(x)=\frac{4}{\beta}.
\]
We have the following theorem.
\begin{theorem}\label{EQDimLink}
Let $\beta_n$ be a sequence of Pisot numbers and suppose that
\[
W_1(\mu_{I_{\beta_n}},\mathcal L_{I_{\beta_n}})\to 0
\]
where $W_1$ denotes the Wasserstein metric on the space of probability measures on the Euclidean line. Then $\dim_H(\nu_{\beta_n})\to 1$.
\end{theorem}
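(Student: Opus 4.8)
The plan is to combine the identity \eqref{LambdaRowsum} with the lower bound \eqref{LambdaLowerBound} via the hypothesis $W_1(\mu_{I_{\beta_n}},\mathcal L_{I_{\beta_n}})\to 0$. First I would observe that the function $g_{\beta}$ is bounded (it takes values in $\{1,2,3,4\}$), but it is not Lipschitz, so one cannot directly apply the Kantorovich--Rubinstein duality to $g_{\beta}$ itself. To get around this I would approximate $g_{\beta}$ from below by a Lipschitz function: the only discontinuities of $g_{\beta}$ occur where one of $\beta x - 1$, $\beta x$, $\beta x + 1$ crosses an endpoint of $I_{\beta}$, i.e. at finitely many points of $I_{\beta_n}$. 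For a small parameter $\eta > 0$, let $g_{\beta}^{\eta}$ be a $(\mathrm{Lip}/\eta)$-Lipschitz function with $g_{\beta}^{\eta} \le g_{\beta}$ everywhere and $g_{\beta}^{\eta} = g_{\beta}$ outside the union of $\eta$-neighbourhoods of those finitely many bad points. Then
\[
\lambda_n = \int g_{\beta_n}\, d\mu_{I_{\beta_n}} \ \ge\ \int g_{\beta_n}^{\eta}\, d\mu_{I_{\beta_n}} \ \ge\ \int g_{\beta_n}^{\eta}\, d\mathcal L_{I_{\beta_n}} - \frac{C}{\eta} W_1(\mu_{I_{\beta_n}},\mathcal L_{I_{\beta_n}}),
\]
using Kantorovich duality in the last step, where $C$ depends on the Lipschitz constant of $g_{\beta_n}^{\eta}$ normalised by $\eta$.

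Next I would control the two error terms. The difference $\int g_{\beta_n}^{\eta}\, d\mathcal L_{I_{\beta_n}} - \int g_{\beta_n}\, d\mathcal L_{I_{\beta_n}}$ is bounded by $3 \cdot (\text{number of bad points}) \cdot (2\eta) / |I_{\beta_n}|$ since $g_{\beta}$ and $g_{\beta}^{\eta}$ differ by at most $3$ and only on a set of Lebesgue measure $O(\eta)$; crucially the number of bad points is at most $6$ independent of $n$, and $|I_{\beta_n}| = \frac{2}{\beta_n - 1}$ is bounded below, so this term is $O(\eta)$ uniformly in $n$. Combined with $\int g_{\beta_n}\, d\mathcal L_{I_{\beta_n}} = \frac{4}{\beta_n}$, we get
\[
\lambda_n \ \ge\ \frac{4}{\beta_n} - C'\eta - \frac{C''}{\eta} W_1(\mu_{I_{\beta_n}},\mathcal L_{I_{\beta_n}})
\]
with $C', C''$ absolute constants. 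Given $\e > 0$, choose $\eta$ with $C'\eta < \e/2$; then for $n$ large enough the hypothesis makes the last term less than $\e/2$, so $\limsup_n \big(\tfrac{4}{\beta_n} - \lambda_n\big) \le \e$, hence $\lambda_n \ge \frac{4}{\beta_n} - o(1)$. Since $\beta_n \in (1,2)$ we have $\frac{4}{\beta_n} > 2$, so in fact $\lambda_n$ is eventually close to $\frac{4}{\beta_n}$ from below (and trivially $\lambda_n \le 4$, but more usefully one should note $\lambda_n \le \frac{4}{\beta_n}$ is not needed).

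Finally I would feed this into the dimension formula. By \eqref{LambdaLowerBound}, $H(\beta_n) \ge \log 4 - \log \lambda_n \ge \log 4 - \log\!\big(\tfrac{4}{\beta_n} - o(1)\big) = \log \beta_n + o(1)$, so that $\frac{H(\beta_n)}{\log \beta_n} \ge 1 + o(1)$, and since $\dim_H(\nu_{\beta_n}) = \min\{1, H(\beta_n)/\log \beta_n\}$ by Hochman's formula, we conclude $\dim_H(\nu_{\beta_n}) \to 1$. I expect the main obstacle to be the non-Lipschitz nature of $g_{\beta}$ and making the Lipschitz-approximation error bounds genuinely uniform in $n$ — this requires knowing that the number of discontinuity points of $g_{\beta_n}$ stays bounded and that $|I_{\beta_n}|$ stays bounded away from $0$, both of which hold since $\beta_n \in (1,2)$; one should also double-check that the left probability eigenvector of $M_0$ is unique and has strictly positive entries (Perron--Frobenius, after checking irreducibility of the relevant matrix) so that $\mu_{I_{\beta_n}}$ is well-defined as claimed before \eqref{LambdaRowsum}.
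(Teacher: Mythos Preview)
Your overall strategy matches the paper's: use \eqref{LambdaRowsum} to control $\lambda(\beta_n)$ via the Wasserstein distance, then plug into \eqref{LambdaLowerBound}. The Lipschitz approximation of the step function $g_\beta$ is exactly the right device, and your uniformity observations (boundedly many discontinuities, $|I_{\beta_n}|$ bounded below since Pisot numbers are bounded away from $1$) are correct.

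However, there is a sign error that breaks the argument. You approximate $g_\beta$ from \emph{below}, obtaining the \emph{lower} bound $\lambda_n \ge \tfrac{4}{\beta_n}-o(1)$. In the final step you then write
\[
H(\beta_n)\ \ge\ \log 4 - \log\lambda_n\ \ge\ \log 4 - \log\!\Big(\tfrac{4}{\beta_n}-o(1)\Big),
\]
but the second inequality would require $\lambda_n \le \tfrac{4}{\beta_n}-o(1)$, which is the opposite of what you proved. A lower bound on $\lambda_n$ only gives an \emph{upper} bound on $\log 4 - \log\lambda_n$, which is useless here. (Your parenthetical remark that ``$\lambda_n\le\tfrac{4}{\beta_n}$ is not needed'' is precisely backwards: an upper bound on $\lambda_n$ is the whole point.)

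The fix is easy: approximate $g_\beta$ from \emph{above} by a Lipschitz function $g_\beta^\eta\ge g_\beta$ agreeing with $g_\beta$ outside $\eta$-neighbourhoods of its jumps. This yields
\[
\lambda_n=\int g_{\beta_n}\,d\mu_{I_{\beta_n}}\ \le\ \int g_{\beta_n}^\eta\,d\mu_{I_{\beta_n}}\ \le\ \int g_{\beta_n}^\eta\,d\mathcal L_{I_{\beta_n}}+\frac{C}{\eta}W_1(\mu_{I_{\beta_n}},\mathcal L_{I_{\beta_n}})\ \le\ \frac{4}{\beta_n}+C'\eta+\frac{C}{\eta}W_1,
\]
hence $\lambda_n\le\tfrac{4}{\beta_n}+o(1)$, and then the chain $H(\beta_n)\ge \log 4-\log\lambda_n\ge \log\beta_n-o(1)$ is valid. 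This is essentially what the paper does: it bounds $|\mu_{I_\beta}(A)-\mathcal L_{I_\beta}(A)|$ for each interval of constancy of $g_\beta$ (equivalent to your two-sided Lipschitz sandwich) to get a uniform upper bound on $\lambda(\beta)-\tfrac{4}{\beta}$ in terms of $W_1$.
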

\begin{proof}
The function $g_{\beta}$ is a step function on $I_{\beta}$ and it is straightforward to give an upper bound for $|\mu_{I_{\beta}}(A)-\mathcal L_{I_{\beta}}(A)|$ for any of the intervals $A$ upon which the step function is constant in terms of the distance between $\mu_{I_{\beta}}$ and $\mathcal L_{I_{\beta}}$. These upper bounds are uniform in $\beta$. This in turn yields uniform upper bounds on $\int_{I_{\beta}}g_{\beta}d\mu_{I_{\beta}}$, and so by equation \ref{LambdaRowsum} we have a uniform upper bound on $\lambda(\beta)-\log(\frac{4}{\beta})$ in terms of $W_1(\mu_{I_{\beta_n}},\mathcal L_{I_{\beta_n}})$. 

Finally, for Pisot $\beta_n$
\[
\dim_H(\nu_{\beta_n})=\frac{H(\beta_n)}{\log(\beta_n)}\geq \frac{\log 4-\log \lambda(\beta_n)}{\log \beta_n}\to \frac{\log 4 - \log\left(\frac{4}{\beta_n}\right)}{\log(\beta_n)}=1.
\] 
as required.
\end{proof}

The matrix $M_0(\beta)$ associated to a Pisot number $\beta$ is very large for $\beta$ of large degree, and so the numerical evidence we have is limited, but the evidence that we have does suggest that the measures $\mu_{I_{\beta_n}}$ are increasingly well equidistributed for sequences $\beta_n$ of Pisot numbers in $(1,2-\epsilon)$ with degree tending to infinity, see Table \ref{tab:table1}. The $\epsilon$ here is to exclude the multinacci family, which has different behaviour\footnote{Many structures related to the multinacci family $\beta_n^n-\beta_n^{n-1}-\cdots-1=0$, including the spectrum of $\beta_n$, are well understood.}. 
\begin{table}[ht]
  \begin{center}

    \begin{tabular}{r|c|c|c|r} 
      \text{Polynomial} & \text{ $\beta$} & \text{Bound} & \text{$W_1(\mu_\beta,\operatorname{Leb})$}& \text{Matrix Size} \\
     
      \hline
      ${\bf x^3-x^2-x-1}$ & 1.8393 & 0.96422 & 0.13925& 7\\
      $x^3-x^2-1$ & 1.4656 & 0.999116 & 0.0547178& 51\\
      $x^3-x-1$ & 1.3247 & 0.99999 & 0.0286671& 181\\
      ${\bf x^4-x^3-x^2-x-1}$ & 1.9276 & 0.973329 & 0.187067& 9\\
      $x^4-x^3-1$ & 1.3803 & 0.999989 & 0.0149032& 1257\\
      ${\bf x^5-x^4-x^3-x^2-x-1}$ & 1.9659 & 0.983565  & 0.222569& 11\\
      $x^5-x^4-x^3-x^2-1$ & 1.8885 & 0.982269 & 0.0803806& 745 \\
      $x^5-x^4-x^3-x^2+1$ & 1.7785 & 0.995758 & 0.0246573& 951 \\
      $x^5-x^4-x^3-1$ & 1.7049 & 0.993043  &0.0356598& 339 \\
      $x^5-x^4-x^3-x-1$ & 1.8124 & 0.982434 & 0.0571201& 351 \\
      $x^5-x^4-x^3+x^2-1$ & 1.4432 & 0.999982 & 0.00782515& 5423\\
      $x^5-x^4-x^2-1$ & 1.5702 & 0.999862 & 0.0195581& 847\\
      $x^5-x^3-x^2-x-1$ & 1.5342 & 0.999833 & 0.00890312& 2651\\
    \end{tabular}\caption{Pisot numbers $\beta\in(1,2)$ of degree less than six, together with the Wasserstein distance to normalised Lebesgue measure. Multinacci numbers, which have somewhat different behaviour, are in bold.}\label{tab:table1}
  \end{center}
\end{table}


Finally, we give our conjecture on the distribution properties of the measures $\mu_{I_{\beta_n}}$. A proof of this conjecture would imply that Conjecture \ref{PisotDimConjecture} is true by Theorem \ref{EQDimLink}. \begin{conjecture}\label{DimensionConjecture}
Let $\epsilon>0$ and let $(\beta_n)$ be a sequence of Pisot numbers in the interval $(1, 2-\epsilon)$ such that the degree of $\beta_n$ tends to infinity as $n$ tends to infinity. Then the distance 
\[
d(\mu_{I_{\beta_n}},\mathcal L_{I_{\beta_n}})\to 0
\]
as $n\to\infty$, and consequently, by Theorem \ref{EQDimLink}, $\dim_H(\nu_{\beta_n})\to 1$.
\end{conjecture}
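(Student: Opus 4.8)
The plan is to recast the conjecture as a statement about the stability of a Perron--Frobenius eigenmeasure under discretisation, and then to isolate the genuinely hard input. As observed above, $\mu_{I_\beta}$ is the normalised left Perron eigenvector of $M_0(\beta)$, which is exactly the weighted transition matrix --- weight $2$ on the digit $0$, weight $1$ on $\pm1$ --- of the dynamics $v\mapsto\beta v+c$, $c\in\{-1,0,1\}$, on the finite set $V=X(\beta)\cap I_\beta$, with mass absorbed whenever an image leaves $I_\beta$ (note that $V$ is mapped into itself whenever the image stays in $I_\beta$, since $X(\beta)$ is closed under appending a digit). The same dynamics on the whole interval is encoded by the transfer operator $\mathcal L_\beta p(y)=\tfrac1\beta\bigl(2p(y/\beta)+p((y-1)/\beta)+p((y+1)/\beta)\bigr)$ on $I_\beta$; since each of the three contractions $y\mapsto(y-c)/\beta$ maps $I_\beta$ into itself, one has $\mathcal L_\beta\mathbf 1=(4/\beta)\mathbf 1$, so the leading eigenvalue is $4/\beta$ and normalised Lebesgue $\mathcal L_{I_\beta}$ is the leading eigenmeasure (equivalently, the quasi-stationary distribution of this continuum system). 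Thus the conjecture asserts precisely that, along Pisot numbers of growing degree, the finite sets $V_n=X(\beta_n)\cap I_{\beta_n}$ become a mesh fine enough in $I_{\beta_n}$ that the leading eigenmeasure of the discretised operator $M_0(\beta_n)$ converges in $W_1$ to the leading eigenmeasure $\mathcal L_{I_{\beta_n}}$ of the continuum operator $\mathcal L_{\beta_n}$; by Theorem~\ref{EQDimLink} this gives $\dim_H(\nu_{\beta_n})\to1$.

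I would attack this in two steps. \emph{Step 1 (equidistribution).} Prove that the normalised counting measure on $V_n$ converges to $\mathcal L_{I_{\beta_n}}$ in $W_1$. The natural tool is the cut-and-project description of $X(\beta)$ from Section~\ref{GeneralSec}: a point $x=\sum c_i\beta^{n-i}$ lies in $V$ exactly when the vector of Galois-conjugate evaluations of the corresponding $\{-1,0,1\}$-polynomial lies in an explicit product of windows, so the distribution of the real coordinate is governed by a Weyl-type equidistribution of elements of $\mathbb Z[\beta]$ in a growing-dimensional box, which one needs uniformly along $(\beta_n)$. \emph{Step 2 (spectral stability).} View $\mathcal L_\beta$ as a Ruelle transfer operator for the affine contractions $y\mapsto(y-c)/\beta$; after normalising by $4/\beta$ it is quasi-compact with a spectral gap on, say, functions of bounded variation, with Lasota--Yorke/gap constants uniform for $\beta$ in the compact interval $[\theta_P,2-\epsilon]$ (with $\theta_P\approx1.3247$ the minimal Pisot number, a lower bound for every Pisot number in the range). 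Hence the normalised iterates $(\tfrac{\beta_n}{4}\mathcal L_{\beta_n})^k\nu$ converge geometrically to $\mathcal L_{I_{\beta_n}}$; comparing these with $(\lambda_n^{-1}M_0(\beta_n))^k$ applied to the uniform measure on $V_n$ --- the two differing by an amount governed, via Step~1 and the $\beta$-Lipschitz property of each $y\mapsto\beta y+c$, by how well $V_n$ fills $I_{\beta_n}$ --- and choosing $k=k(n)\to\infty$ slowly enough sandwiches $\mu_{I_{\beta_n}}$ between the discrete iterate and its continuum limit, forcing $W_1(\mu_{I_{\beta_n}},\mathcal L_{I_{\beta_n}})\to0$. (An alternative to Step~2 would use the cocycle description of $\mu$ in Theorem~\ref{Thm3} to write $\mu_{I_\beta}$ as a cocycle product along paths from $0$ and argue that this cocycle becomes asymptotically trivial as the degree grows, but turning that into a clean estimate seems harder.)

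The real obstacle is Step 1. The Garsia separation constant of a Pisot number $\beta$ of degree $d$ is bounded below only by $\prod_{i\ge2}(1-|\beta_i|)$, which collapses as soon as a conjugate $\beta_i$ approaches the unit circle --- precisely the ``almost non-hyperbolic'' regime in which the cut-and-project window degenerates and any naive equidistribution estimate for $V_n$ breaks down. Excluding this uniformly along an arbitrary sequence of high-degree Pisot numbers seems to require genuinely new Diophantine information about their conjugates, of a flavour reminiscent of (if perhaps weaker than) Lehmer-type problems; failing that, one might prove the conjecture only under an extra hypothesis quantitatively separating the conjugates of $\beta_n$ away from $\{|z|=1\}$ (a strengthening of the multinacci exclusion) or along a subsequence of full density. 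Two secondary points: first, the \emph{unweighted} set $V_n$ might fail to equidistribute even when $\mu_{I_{\beta_n}}\to\mathcal L_{I_{\beta_n}}$ does hold, in which case Step~1 must be replaced by an equidistribution statement for an appropriately weighted counting measure on $V_n$ feeding a correspondingly weighted Ulam-type discretisation in Step~2 --- the same scheme with more bookkeeping; second, because the maps are expanding, the per-step discretisation error in Step~2 grows by a factor $\beta_n$, so the geometric contraction towards the eigenmeasure and this geometric error growth must be balanced carefully, and this is what constrains the admissible growth of $k(n)$. The one point that is genuinely short is the identification of the continuum eigenmeasure as Lebesgue rather than something merely comparable to it: the computation $\mathcal L_\beta\mathbf 1=(4/\beta)\mathbf 1$ together with positivity of $\mathbf 1$ settles it, and incidentally re-derives the value $\int_{I_\beta}g_\beta\,d\mathcal L_{I_\beta}=4/\beta$ used in the proof of Theorem~\ref{EQDimLink}.
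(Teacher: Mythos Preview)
This statement is a \emph{conjecture} in the paper, not a theorem: the paper offers no proof, only the numerical evidence of Table~\ref{tab:table1} and the remark that a proof would imply Conjecture~\ref{PisotDimConjecture} via Theorem~\ref{EQDimLink}. So there is nothing to compare your argument to on the paper's side.

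Your proposal is itself explicitly not a proof but a strategy, and you are candid about this. The reformulation is sound: identifying $\mu_{I_\beta}$ as the Perron eigenmeasure of the discrete operator $M_0(\beta)$ and $\mathcal L_{I_\beta}$ as the Perron eigenmeasure of the continuum transfer operator $\mathcal L_\beta$, with the computation $\mathcal L_\beta\mathbf 1=(4/\beta)\mathbf 1$, is exactly the structure underlying the paper's equation~\eqref{LambdaRowsum} and the identity $\int g_\beta\,d\mathcal L_{I_\beta}=4/\beta$. The two-step plan (equidistribution of the discrete support, then Ulam-type spectral stability) is a natural line of attack.

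You have also correctly located the genuine obstruction. Step~1 --- uniform equidistribution of $X(\beta_n)\cap I_{\beta_n}$ along an arbitrary sequence of Pisot numbers of growing degree --- is not known, and your diagnosis that the difficulty concentrates where Galois conjugates approach the unit circle (so that the Garsia separation and the cut-and-project window both degenerate) is on point. The paper flags essentially the same open problem as Question~2 in its final section. Until that equidistribution input (or a suitable weighted variant) is available, neither your outline nor any other approach in the paper yields a proof; what you have written is a reasonable research programme, not a proof of the conjecture.
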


\section{A First Example: The Golden Mean}\label{GM}

In this section we prove our main theorems for the special case that $\beta$ is equal to the golden mean $\phi$. Throughout we use the maps $T_i:\mathbb R\to\mathbb R$ given by $T_i(x)=\phi x +i$.

Recall that \[X(\phi)=X_{\{-1,0,1\}}(\phi)=\left\{\sum_{i=1}^n c_i\phi^{n-i}:n\in\mathbb N, c_i\in\{-1,0,1\}\right\}\] and that, for $x\in X(\phi)$,
\[
\mathcal N_n(x):=\#\{a_1\cdots a_n, b_1\cdots b_n \in \{0,1\}^n : \sum_{i=1}^n (a_i-b_i)\phi^{n-i}=x\}
\]
We give the special case of Theorem $\ref{Thm1}$ for when $\beta=\phi$.
\begin{theorem}\label{MeasureExistence}
There exists a number $\lambda>0$ such that limit
\[
\lim_{n\to\infty} \frac{1}{\lambda^n}\mathcal N_n(x)=:\mu(x)
\]
exists for each $x\in X(\phi)$.
\end{theorem}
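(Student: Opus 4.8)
The plan is to realise $\mathcal{N}_n(x)$ as a fixed entry of a product of finitely many non-negative transfer matrices and then to read the limit off from Perron--Frobenius theory. I would start from the weighted reformulation
\[
\mathcal{N}_n(x)=\sum_{\substack{c_1\cdots c_n\in\{-1,0,1\}^n\\ \sum_i c_i\phi^{\,n-i}=x}}2^{\#\{i\,:\,c_i=0\}},
\]
obtained by grouping the pairs $(a,b)$ by their difference word $c=a-b$, together with two elementary facts. First, $\mathcal{N}_n(x)\le\mathcal{N}_n(0)$ for every $x$: writing $f_n(y)=\#\{a\in\{0,1\}^n:\sum_i a_i\phi^{\,n-i}=y\}$ one has $\mathcal{N}_n(x)=\sum_y f_n(y)f_n(y-x)\le\sum_y f_n(y)^2=\mathcal{N}_n(0)$ by Cauchy--Schwarz. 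Second, $\mathcal{N}_{n+k}(0)\ge\mathcal{N}_n(0)\mathcal{N}_k(0)$ by concatenating admissible words, so by Fekete's lemma $\lambda:=\lim_n\mathcal{N}_n(0)^{1/n}=\sup_n\mathcal{N}_n(0)^{1/n}$ exists and $\mathcal{N}_n(0)\le\lambda^n$; combined with the first fact this gives the a priori bound $\mathcal{N}_n(x)\le\lambda^n$ for all $x$ and $n$. This $\lambda$ will be the constant in the theorem.

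Next I would build the matrices. Fix one length-$n$ coding $c_1\cdots c_n$ of $x$; every other length-$n$ coding equals $(c_i+e_i)_i$ where $e$ is a word over $\{-2,\dots,2\}$ with $\sum_i e_i\phi^{\,n-i}=0$ and $c_i+e_i\in\{-1,0,1\}$ for all $i$. For such $e$ the partial sums $\sigma_k=\sum_{i\le k}e_i\phi^{\,k-i}$ satisfy $\phi^{\,n-k}\sigma_k=-\sum_{i>k}e_i\phi^{\,n-i}$, whence $|\sigma_k|<\tfrac{2}{\phi-1}=2\phi$ for all $k$. Since $\phi$ is a Pisot number, the Garsia separation property makes $V:=X_{\{-2,\dots,2\}}(\phi)\cap(-2\phi,2\phi)$ a finite set; enumerate it as $v_1=0,v_2,\dots,v_k$. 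For $c\in\{-1,0,1\}$ define the non-negative $k\times k$ matrix $M_c$ with $(M_c)_{i,j}=\sum w(c,e)$, the sum being over those $e\in\{-2,\dots,2\}$ with $v_j=\phi v_i+e$ and $c+e\in\{-1,0,1\}$, where $w(c,e)=2$ if $c+e=0$ and $w(c,e)=1$ otherwise. Because all partial sums of admissible difference words stay in $V$, a path-counting argument gives, with $W=(1,0,\dots,0)$,
\[
\mathcal{N}_n(x)=(WM_{c_1}\cdots M_{c_n})_1 ,
\]
the value being independent of the chosen coding of $x$ (changing codings merely relabels the admissible difference words).

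Finally I would extract the limit. Taking $x=0$ with the all-zero coding gives $\mathcal{N}_n(0)=(M_0^{\,n})_{1,1}$; since $(M_0)_{1,1}=2$, the strongly connected component $C_0$ of the vertex $0$ in the graph of $M_0$ carries a self-loop and is therefore aperiodic, and the growth rate $\mathcal{N}_n(0)^{1/n}\to\lambda$ forces the Perron eigenvalue of the restriction of $M_0$ to $C_0$ to equal $\lambda$; Perron--Frobenius for this primitive block then yields $\mathcal{N}_n(0)/\lambda^n\to\mu(0)\in(0,\infty)$. For a general $x\in X(\phi)$ with minimal coding $c_1\cdots c_m$, padding with leading zeros gives $\mathcal{N}_n(x)=(WM_0^{\,n-m}M_{c_1}\cdots M_{c_m})_1$ for all $n\ge m$, so it remains to show that $WM_0^{\,N}\vec{y}/\lambda^{N}$ converges for the fixed non-negative vector $\vec{y}=M_{c_1}\cdots M_{c_m}W^{\mathsf T}$. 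Here lies the main obstacle: $M_0$ is reducible, so a priori this quantity could carry a polynomial factor (two dominant communicating classes in a chain) or oscillate (a periodic dominant class). I would exclude both using the a priori bound $\mathcal{N}_n(x)\le\lambda^n$ from the first paragraph together with an explicit inspection of $V$ and of the graph of $M_0$ for $\phi$, showing that $C_0$ is the only communicating class of Perron value $\lambda$ relevant to the terminal states appearing in $\vec{y}$; decomposing $WM_0^{\,N}$ along $C_0$ and the classes downstream of it then gives $\mathcal{N}_n(x)/\lambda^n\to\mu(x)$ for every $x\in X(\phi)$, as required.
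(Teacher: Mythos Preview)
Your approach is essentially the paper's: build the transfer matrices $M_{-1},M_0,M_1$ on the finite state set $V$, realise $\mathcal N_n(x)$ as the $(1,1)$ entry of a matrix product, pad a coding of $x$ by leading zeros so that $\mathcal N_{n+k}(x)=(e_1M_0^kM_{c_1}\cdots M_{c_m})_1$, and then extract the limit from Perron--Frobenius theory applied to $M_0$. The paper's golden-mean proof simply asserts that $M_0$ is primitive (verifiable by direct computation of a small power), so your reducibility worry never arises there and $\tfrac{1}{\lambda^k}e_1M_0^k$ converges to the Perron left eigenvector $W$ immediately.

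Your extra ingredients---the Cauchy--Schwarz bound $\mathcal N_n(x)\le\mathcal N_n(0)$ and the Fekete definition of $\lambda$---are exactly what the paper deploys in the \emph{general} hyperbolic case, where primitivity of the relevant matrix need not hold. There the Cauchy--Schwarz bound (the paper's Lemma on maximality of the centre) feeds into a reducible Perron--Frobenius lemma proved in the appendix, which shows that $e_1M_0^n/\lambda^n$ converges: the entrywise bound $(e_1 M_0^n)_j\le (e_1 M_0^n)_1\le\lambda^n$ forces every strongly connected component downstream of $C_0$ to have spectral radius strictly below $\lambda$ (ruling out your ``polynomial factor'' scenario), and aperiodicity of $C_0$ via the self-loop then gives convergence. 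So your outline of the reducible case is right, and in fact the a~priori bound alone already does the work---no separate ``explicit inspection of $V$'' is needed.
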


Here $\lambda$ is easily computed as the maximal eigenvalue of a finite matrix $M_0$ defined below. This theorem will be proved as part of the proof of Theorem \ref{GoldenThm2}.

There are several ways to describe the measure $\mu$. One could construct an infinite transition matrix corresponding to dynamics on $X(\phi)$ induced by the maps $T_0, T_1, T_{-1}$ such that the values of $\mu(x)$ correspond to entries of the eigenvector corresponding to the maximal eigenvalue. In particular, for any finite $K$ we can describe $\mu|_{X(\phi)\cap[-K,K]}$ by reading off the values of an eigenvector of a finite matrix. We give instead a harder construction which allows us to see local structure in the measure $\mu$.

\begin{lemma}\label{MatrixCountingLemma}
There exist matrices $M_0, M_1, M_{-1}$, each of dimensions $17\times 17$ such that for any $x=\sum_{i=1}^n c_i\phi^{n-i}\in X(\phi)$ we have
\[
\mathcal N_n(x)=(M_{c_1}\cdots M_{c_n})_{1,1}
\]
\end{lemma}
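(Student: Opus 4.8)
The plan is to realize $\mathcal N_n(x)$ as counting paths in a labelled graph, and then encode that graph by transition matrices. The key observation is that if $\sum_{i=1}^n c_i\phi^{n-i}=x$ with $c_i=a_i-b_i\in\{-1,0,1\}$ (each $a_i,b_i\in\{0,1\}$), then for every prefix length $m\le n$ the partial sum $y_m:=\sum_{i=1}^m c_i\phi^{m-i}$ lies in the interval $I_\phi=[\frac{-1}{\phi-1},\frac{1}{\phi-1}]=(-\phi^2,\phi^2)$ whenever the whole word sums to something of bounded size — more precisely, we need to track not just $y_m$ but the "error" $x\phi^{-(n-m)}-y_m$, which must stay bounded since the remaining digits $c_{m+1}\cdots c_n$ contribute at most $\sum_{j\ge 1}\phi^{-j}=\phi$ in absolute value after rescaling. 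So the relevant state at time $m$ is the value $y_m$ together with enough information about how $x$ sits relative to it; but because $\phi$ is Pisot and by the Garsia separation property, the set of values of $y_m$ that can ever occur along a valid path (for \emph{any} $x$ and \emph{any} $n$) is a finite subset $V=\{v_1,\dots,v_k\}$ of $X(\phi)$. First I would identify this finite state set $V$ explicitly and check $|V|=17$ by a direct computation: these are the points of $X(\phi)$ in the window determined by the reachability constraint, and one tracks the Galois-conjugate (contracted) coordinate to cut the set down to finitely many states.

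Next I would define the matrices: for $i\in\{-1,0,1\}$, set $(M_i)_{p,q}$ to be the number of pairs $(a,b)\in\{0,1\}^2$ with $a-b=i$ that effect the transition $v_p\mapsto \phi v_p + i = v_q$ — that is, $(M_i)_{p,q}=2$ if $i=0$ and $v_q=\phi v_p$, $(M_i)_{p,q}=1$ if $i=\pm1$ and $v_q=\phi v_p+i$, and $0$ otherwise. (The weight $2$ for $i=0$ records that both $(0,0)$ and $(1,1)$ give difference $0$.) One must check that all these transitions stay within $V$, i.e.\ that $V$ is forward-closed under the partially defined maps $x\mapsto \phi x+i$ restricted to the reachable window; this is where the careful choice of $V$ matters. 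Then, starting from the state $v_1=0$, a product $M_{c_1}\cdots M_{c_n}$ counts weighted paths $0=v_{p_0}\to v_{p_1}\to\cdots\to v_{p_n}$ following the digit pattern $c_1,\dots,c_n$, and $(M_{c_1}\cdots M_{c_n})_{1,q}$ equals the number of digit strings $(a_i-b_i)_{i=1}^n$ realizing $\sum c_i\phi^{n-i}=v_q$ starting from $0$. Taking $q$ to be the index with $v_q=x$ gives $(M_{c_1}\cdots M_{c_n})_{1,1}$ once we index so that the target state $x$ corresponds to coordinate $1$ — or, more cleanly, I would set up the indexing/bookkeeping so that the first coordinate always tracks "current partial sum equals the intended target rescaled appropriately," which is the standard trick that makes $(M_{c_1}\cdots M_{c_n})_{1,1}$ the right count independent of which coding of $x$ we picked.

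The main obstacle is verifying that $17$ states genuinely suffice — i.e.\ pinning down the exact reachability window and proving it is closed under all the maps $T_i$ that can legally be applied. This requires using that $\phi$ is Pisot (so $X(\phi)\cap I_\phi$ is finite by Garsia) \emph{and} a quantitative bound on how far a partial sum $y_m$ can stray from the target while still being completable, which in turn uses the contraction by $\phi^{-1}=\phi-1$ of the tail. Concretely one argues: if $\sum_{i=1}^n c_i\phi^{n-i}=x$ then $|\,y_m - x\phi^{-(n-m)}\,| = |\sum_{i=m+1}^n c_i\phi^{m-i}| \le \sum_{j=1}^{n-m}\phi^{-j} < \phi$, and since $x\phi^{-(n-m)}$ and $y_m$ are both in $X(\phi)$ with bounded conjugate, finiteness follows; an explicit enumeration then yields the number $17$. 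The remaining verification — that the matrix product correctly counts, and is independent of the chosen coding $c_1\cdots c_n$ of $x$ (which it is, because any two codings of the same $x\in X(\phi)$ lead to the same terminal state, as the terminal state is determined by $x$ alone) — is then a routine induction on $n$.
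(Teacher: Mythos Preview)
Your proposal has a genuine gap: with the matrices you define, the product $M_{c_1}\cdots M_{c_n}$ does \emph{not} count what you need. Each of your $M_i$ has at most one nonzero entry per row (at the unique $q$ with $v_q=\phi v_p+i$, if it exists), so starting from $v_1=0$ the product simply follows the single deterministic path determined by the fixed coding $c_1\cdots c_n$, arriving at $x$ with weight $2^{\#\{i:c_i=0\}}$. It never explores the other codings $d_1\cdots d_n$ of $x$, and $(M_{c_1}\cdots M_{c_n})_{1,1}$ vanishes unless $x=0$. Relatedly, your claim that the partial sums $y_m$ of the variable word range over a fixed finite set $V$ is false: your own bound shows only that $|y_m-x\phi^{-(n-m)}|<\phi$, and since $x$ ranges over the unbounded set $X(\phi)$ so does $y_m$; the set of all such $y_m$ is in fact all of $X(\phi)$.

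The paper's fix---which your ``more cleanly'' remark gestures towards but never carries out---is to track the \emph{difference} $z_m=\sum_{i\le m}(c_i-d_i)\phi^{m-i}$ between the fixed coding $c$ of $x$ and the variable one $d=a-b$. Since both code $x$, one has $z_0=z_n=0$, and since $c_i-d_i\in\{-2,\ldots,2\}$ the orbit $z_{m+1}=\phi z_m+(c_{m+1}-d_{m+1})$ can never leave $(-2\phi,2\phi)$ and still return to $0$; Garsia separation then gives a finite reachable set $V$, and direct enumeration yields $|V|=17$. The matrix $M_c$ records, for each \emph{known} digit $c$ of the fixed coding, all three possible transitions $z\mapsto\phi z+(c-d)$ with $d\in\{-1,0,1\}$ (weighted by $2$ when $d=0$), so each $M_c$ has up to three nonzero entries per row, and $(M_{c_1}\cdots M_{c_n})_{1,1}$ genuinely counts the weighted number of $d$-words with $z_n=0$, which is $\mathcal N_n(x)$.
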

\begin{proof}
This proof is similar to the proof of Lemma 3.1 in \cite{AFKP}, we are just using a larger digit set.

If $x=\sum_{i=1}^n c_i\phi^{n-i}$ for some word $c_1\cdots c_n\in\{-1,0,1\}^n$ then we start by tracking words $d_1\cdots d_n\in\{-1,0,1\}^n$ such that
\[
\sum_{i=1}^n c_i\phi^{n-i}=\sum_{i=1} d_i\phi^{n-i},
\]
i.e.
\begin{equation}\label{CDEquality}
\sum_{i=1}^n (c_i-d_i)\phi^{n-i}=0.
\end{equation}
Here $d_i$ represents a difference $a_i-b_i$  where $a_i,b_i\in\{0,1\}$, and so when counting words we want to double count the case $d_i=0$ since it corresponds both to $a_i=b_i=1$ and $a_i=b_i=0$. This accounts for the 2 in the definition of the matrices $M_0, M_1, M_{-1}$.

Now the equality \ref{CDEquality} is equivalent to
\begin{equation}\label{CDEquality2}
T_{c_n-d_n}\circ \cdots \circ T_{c_1-d_1}(0)=0,
\end{equation}
where each $c_i-d_i\in\{-2,-1,0,1,2\}$. The maps $T_i$ are expanding, and in particular if $x\geq 2\phi$ then $T_i(x)\geq 2\phi$, and if $x\leq-2\phi$ then $T_i(x)\leq-2\phi$, for any $i\in\{-2,-1,0,1,2\}$. Thus if equation \ref{CDEquality} holds then for each $m\leq n$ we have
\[
T_{c_m-d_m}\circ \cdots \circ T_{c_1-d_1}(0)\in(-2\phi,2\phi).
\]
By the Garsia separation lemma, or by direct calculation, one can show that there are a finite number of points of the form $T_{c_m-d_m}\circ \cdots \circ T_{c_1-d_1}(0)$ which lie in $(-2\phi,2\phi)$ when $c_i, d_i\in\{-1,0,1\}$. In fact there are 17 such points, we call the set of such possible values $V=\{v_1,\cdots, v_{17}\}$ with $v_1=0$. 

Now in general the difference $c_i-d_i$ can take values in $\{-2,-1,0,1,2\}$, but if we know the value of $c_i$ then $c_i-d_i$ can only take three of these values, if $c_i=1$ then $c_i-d_i$ can take values $0$ $1$ or $2$ for example. 

Let $M_1$ be the $17\times 17$ matrix with rows and columns indexed by elements of $V$, with
\[
(M_1)_{ij}=\left\lbrace\begin{array}{cc}1 & v_j=T_0 (v_i) \text{ or } v_j=T_{-2} (v_i)\\
2 & v_j=T_{-1} (v_i)\\
0 & \text{ otherwise } \end{array}\right. 
\]
This is the transition matrix for the maps $T_{c_i-d_i}$ where we know $c_i=1$ and $d_i\in\{-1,0,1\}$, the values $1$ and $2$ occur because we have one way of letting $d_i=a_i-b_i$ equal $1 $ or $-1$ but two ways of letting $d_i=0$.

Similarly, let $M_{-1}$ be the matrix with rows and columns indexed by elements of $V$, with
\[
(M_{-1})_{ij}=\left\lbrace\begin{array}{cc}1 & v_i=T_0 (v_j) \text{ or } v_i=T_{2} (v_j)\\
2 & v_i=T_{1} (v_j)\\
0 & \text{ otherwise } \end{array}\right.
\]
and let $M_{0}$ be the matrix with rows and columns indexed by elements of $V$, with
\[
(M_{0})_{ij}=\left\lbrace\begin{array}{cc}1 & v_i=T_1 (v_j) \text{ or } v_i=T_{-1} (v_j)\\
2 & v_i=T_{0} (v_j)\\
0 & \text{ otherwise } \end{array}\right. .
\]
Then given $c_1,\cdots c_n\in\{-1,0,1\}^n$, the $(i,j)$th term of the matrix $M_{c_n}\cdots M_{c_1}$ represents the number of $d_1\cdots d_n\in\{-1,0,1\}$ for which
\begin{equation}\label{CD3}
T_{c_n-d_n}\circ\cdots T_{c_1-d_1}(v_i)=v_j. 
\end{equation}

Again here when we refer to the `number' of $d_1\cdots d_n$ we are double counting when $d_i=0$ because we have two ways of putting $a_i-b_i=0$. 

Thus in order to count equalities of the form (\ref{CDEquality2}), we need to use (\ref{CD3}) with $v_i=v_j=v_{1}=0. $ We conclude that the number of $a_1\cdots a_n, b_1\cdots b_n$ such that $\sum_{i=1}^n (a_i-b_i)\phi^{n-i}=x$ is given by the top left entry of the matrix $M_{c_n}\cdots M_{c_1}$, where $c_1\cdots c_n$ is any $\{-1,0,1\}$ code for which $x=\sum_{i=1}^n c_i\phi^{n-i}$.

\end{proof}
We now state and prove Theorem \ref{Thm2} for the special case that $\beta$ is equal to $\phi$.
\begin{theorem}\label{GoldenThm2}
Let $W$ be the left eigenvector of $M_0$ corresponding to the maximal eigenvalue $\lambda$. Then for any $x=\sum_{i=1}^n c_i\phi^{n-i}\in X(\phi)$ we have
\[
\mu(x)=\frac{1}{\lambda^n}(WM_{c_1}M_{c_2}\cdots M_{c_n})_1,
\]
that is, $\lambda^n\mu(x)$ is the first entry in the $1\times 17$ vector $WM_{c_1}\cdots M_{c_n}$. 
\end{theorem}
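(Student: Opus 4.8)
The plan is to combine Lemma~\ref{MatrixCountingLemma} with the Perron--Frobenius theory for the matrix $M_0$. By Lemma~\ref{MatrixCountingLemma} we know $\mathcal N_n(x) = (M_{c_1}\cdots M_{c_n})_{1,1}$ for any code $c_1\cdots c_n$ of $x$; the claim to prove is that $\tfrac{1}{\lambda^n}(WM_{c_1}\cdots M_{c_n})_1$ has the same limiting behaviour, i.e. that it equals $\tfrac{1}{\lambda^n}\mathcal N_n(x)$ up to an error that vanishes, and simultaneously that this quantity converges. The first thing I would do is establish the structural facts about $M_0$: it is the transition matrix of the IFS $\{T_{-1},T_0,T_1\}$ restricted to the finite set $V \subset (-2\phi,2\phi)$ with $v_1 = 0$, with a doubling weight on the $T_0$ transitions. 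I would argue that $M_0$ is primitive (irreducible and aperiodic) on the relevant communicating class containing $v_1 = 0$ --- this needs checking, e.g. one shows every $v_i\in V$ can be reached from $0$ and can return to $0$ under the $T_j$ maps, and that $T_0$ fixes $0$ giving a loop of length one which kills periodicity. Primitivity gives a simple maximal eigenvalue $\lambda > 0$ with strictly positive left eigenvector $W$ (normalised, say, so $(W)_1 > 0$) and strictly positive right eigenvector $U$, and the spectral gap estimate $M_0^n = \lambda^n U W^{\mathsf T}/(W^{\mathsf T}U) + O((\lambda')^n)$ with $|\lambda'| < \lambda$.

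Next I would set up the key bookkeeping identity. Fix $x = \sum_{i=1}^n c_i\phi^{n-i}$. The vector $e_1^{\mathsf T} M_{c_1}\cdots M_{c_n}$ has $j$-th entry equal to the (doubly-counted) number of words $d_1\cdots d_n$ with $T_{c_n - d_n}\circ\cdots\circ T_{c_1 - d_1}(0) = v_j$, i.e. the number of representations $\sum d_i\phi^{n-i} = x$ whose "partial sums stay in $(-2\phi,2\phi)$" end at $v_j$; summing the first entry recovers $\mathcal N_n(x) = (M_{c_1}\cdots M_{c_n})_{1,1}$. Now extend the code by prepending zeros: for $m \geq 1$, $0^m c_1\cdots c_n$ is also a code of $x$ (since $T_0(0) = 0$, adding leading zeros does not change the point, only shifts indices; more precisely $\sum_{i=1}^{n+m}c'_i\phi^{n+m-i} = x\cdot\phi^0\cdot$ wait --- I must be careful, prepending zeros multiplies by $\phi^m$). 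The cleaner move is to \emph{append} zeros: $c_1\cdots c_n 0^m$ is a code of $x\phi^m$, not of $x$, so that is also not directly what I want. Instead I will use that $\mathcal N_n(x) = (M_{c_1}\cdots M_{c_n})_{1,1} = (e_1^{\mathsf T}M_{c_1}\cdots M_{c_n})_1$ and separately analyse $\lim_n \lambda^{-n}\mathcal N_n(x)$ by iterating the recursion at the \emph{front}: writing $x = \sum c_i\phi^{n-i}$, the vector $v^{(n)} := e_1^{\mathsf T}M_{c_1}\cdots M_{c_n}$ satisfies, once we know $\mu(x)$ exists, the relation $\lambda^{-n}v^{(n)} \to$ (something) --- so the real content is to show $\lambda^{-n} v^{(n)}$ converges and that $W$ is precisely the left-limit normalisation making $(WM_{c_1}\cdots M_{c_n})_1 = \mathcal N_n(x)$ exactly, not just asymptotically.

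Here is the honest resolution I would pursue, which I believe is the intended argument: one does \emph{not} claim $(WM_{c_1}\cdots M_{c_n})_1 = \mathcal N_n(x)$ for fixed $n$; rather one claims both sides, divided by $\lambda^n$, have the same limit, and \emph{defines} $\mu(x)$ to be that common limit. To see $\lambda^{-n}(WM_{c_1}\cdots M_{c_n})_1$ converges: since $W$ is the left $\lambda$-eigenvector of $M_0$, and $v_1 = 0$ is fixed by $T_0$ with $M_0$ contributing the $\lambda$-eigendirection, we have $W M_0 = \lambda W$, so prepending a $0$ to the code multiplies $WM_{c_1}\cdots M_{c_n}$ by $\lambda$ on the left --- but prepending a $0$ also does not change which point of $X(\phi)$ we are coding only if we are careful about the normalisation; in fact $WM_0 M_{c_1}\cdots M_{c_n} = \lambda\, WM_{c_1}\cdots M_{c_n}$, and the code $0c_1\cdots c_n$ represents the point $\phi x$... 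This sign/scaling subtlety is exactly the main obstacle, and I would resolve it by instead proving convergence directly: show $\lambda^{-n}\mathcal N_n(x) = \lambda^{-n}(M_{c_1}\cdots M_{c_n})_{1,1}$ converges by writing $M_{c_1}\cdots M_{c_n} = M_{c_1}\cdots M_{c_k}\cdot M_{c_{k+1}}\cdots M_{c_n}$ and observing that for the block $M_{c_{k+1}}\cdots M_{c_n}$ the column sums (or the action on the fixed right eigenvector direction) stabilise because each $M_{c_i}$ has a controlled relationship to $M_0$ --- specifically each $M_j$ differs from $M_0$ by a bounded-rank / bounded-norm perturbation supported near the "boundary" points of $V$, and deep in the interior the dynamics is governed by $M_0$.

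Then I would close the argument as follows. Having shown (via primitivity of $M_0$ and the spectral gap) that $W$ is characterised up to scaling by $WM_0 = \lambda W$ with $W > 0$, and having shown $\mu(x) := \lim_n \lambda^{-n}\mathcal N_n(x)$ exists and lies in $(0,\infty)$ (this is Theorem~\ref{MeasureExistence}, to be proved jointly), I would verify the formula by the following consistency check: the quantity $\Phi_j(c_1\cdots c_n) := \lambda^{-n}(WM_{c_1}\cdots M_{c_n})_j$ is well-defined and, by $WM_0 = \lambda W$, is invariant under prepending $0$'s to the code. Since any two codes of the same $x\in X(\phi)$ differ (after prepending zeros to equalise length) by the relation $\sum(c_i - d_i)\phi^{n-i} = 0$, I would show $\Phi_j$ depends only on $x$ (and $j$), not on the code --- this uses that $M_0, M_1, M_{-1}$ are consistent transition matrices for the IFS, i.e. the same argument that made $\mathcal N_n(x)$ code-independent in Lemma~\ref{MatrixCountingLemma}. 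Finally, comparing $\Phi_1$ with $\mu$: by the renewal/recurrence structure of the IFS at $v_1 = 0$, one shows $\Phi_1(c_1\cdots c_n) = \lambda^{-n}\sum_j (WM_{c_1}\cdots M_{c_n})_j \cdot [\,\text{return weight from }v_j\text{ to }0\,]$ in the limit collapses to $\lim_n\lambda^{-n}\mathcal N_n(x)$ because the left eigenvector $W$ is exactly the vector of asymptotic return-weights (this is where Lemma~\ref{LALemma}-style reasoning about eigenvectors encoding row/column sums enters). The main obstacle, to restate, is precisely this last identification --- showing that the left-eigenvector weighting $W$ is the correct one so that $(WM_{c_1}\cdots M_{c_n})_1$ reproduces $\lambda^n\mu(x)$ rather than some other scalar multiple; I expect this to follow by evaluating both sides on the code $0^n$ (for which $x = 0$, $\mathcal N_n(0) = (M_0^n)_{1,1} \sim c\lambda^n$, and $(WM_0^n)_1 = \lambda^n W_1$) to pin down the normalisation of $W$, and then propagating via the code-independence of $\Phi_1$.
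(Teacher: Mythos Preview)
You have all the right ingredients but you talk yourself out of the key step. Your first instinct---prepend zeros---is correct, and your objection to it is a miscalculation. If $x=\sum_{i=1}^n c_i\phi^{n-i}=T_{c_n}\circ\cdots\circ T_{c_1}(0)$, then the code $0^k c_1\cdots c_n$ of length $n+k$ represents
\[
T_{c_n}\circ\cdots\circ T_{c_1}\circ T_0^{\,k}(0)=T_{c_n}\circ\cdots\circ T_{c_1}(0)=x,
\]
since $T_0(0)=0$. Prepending zeros does \emph{not} multiply by $\phi^k$; appending zeros does (exactly as you then note). So $0^k c_1\cdots c_n$ is a length-$(n+k)$ code of the \emph{same} point $x$, and Lemma~\ref{MatrixCountingLemma} gives
\[
\mathcal N_{n+k}(x)=(M_0^{\,k}M_{c_1}\cdots M_{c_n})_{1,1}=e_1^{\mathsf T}M_0^{\,k}M_{c_1}\cdots M_{c_n}\,e_1.
\]
Now primitivity of $M_0$ gives $\lambda^{-k}e_1^{\mathsf T}M_0^{\,k}\to W$ for the (specifically normalised) left eigenvector $W$, and hence
\[
\mu(x)=\lim_{k\to\infty}\frac{1}{\lambda^{n+k}}\mathcal N_{n+k}(x)=\frac{1}{\lambda^n}\,W M_{c_1}\cdots M_{c_n}\,e_1.
\]
This is the paper's proof, and it is exactly the argument you sketch in your second paragraph before the ``wait''.

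Everything after that---the perturbation of $M_j$ from $M_0$, the functions $\Phi_j$, the renewal/return-weight interpretation, pinning down the normalisation on $0^n$---is unnecessary machinery introduced to work around a non-problem. Your worry about the normalisation of $W$ is legitimate in the abstract, but it is resolved automatically: $W$ is not an arbitrary left eigenvector but precisely the limit $\lim_k \lambda^{-k}e_1^{\mathsf T}M_0^{\,k}$, which fixes its scale. Once you accept the prepending step, Theorem~\ref{MeasureExistence} and the formula in Theorem~\ref{GoldenThm2} fall out together in three lines.
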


\begin{proof}
In the previous lemma we showed how to count the number of words $a_1,\cdots a_n, b_1\cdots b_n$ with $\sum_{i=1}^n (a_i-b_i)\phi^i=x$, given knowledge of one code $c_1\cdots c_n\in\{-1,0,1\}^n$ such that \begin{equation}\label{cEq}x=\sum_{i=1}^n c_i\phi^{n-i}.\end{equation}

Here it was important that the length of the word $c_1\cdots c_n$ coding $x$ corresponded with the $\mathcal N_n$ which we want to calculate. But if equation $\ref{cEq}$ holds then it is also true that
\[
x=\sum_{i=1}^n c_i\phi^{n-i} + 0\phi^n + 0\phi^{n+1}+\cdots + 0\phi^{n+(k-1)}.
\]
So again using Lemma \ref{MatrixCountingLemma} we see that
\begin{eqnarray*}
\mathcal N_{n+k}(x)&=&(M_0^kM_{c_1}\cdots M_{c_n})_{1,1}\\
&= & (1~0~0\cdots) M_0^k M_{c_1}\cdots M_{c_n} \left(\begin{array}{c}1\\0\\0\\\vdots\end{array}\right). 
\end{eqnarray*}
If $\lambda$ is the maximal eigenvalue of $M_0$ then, since $M_0$ is primitive, there exists a corresponding eigenvector $W$ such that
\[
\frac{1}{\lambda^k}(1~0~0\cdots) M_0^k\to W
\]
Putting the previous equations together gives that if $x=\sum_{i=1}^n c_i\phi^{n-i}$ then
\begin{eqnarray*}
\mu(x)&=&\lim_{k\to\infty} \frac{1}{\lambda^{n+k}}\mathcal N_{n+k}(x)\\
&=&\lim_{k\to\infty}\frac{1}{\lambda^{k}}\frac{1}{\lambda^n}(1~0~0\cdots) M_0^k M_{c_1}\cdots M_{c_n} \left(\begin{array}{c}1\\0\\0\\\vdots\end{array}\right)\\
&=& \frac{1}{\lambda^n}WM_{c_1}\cdots M_{c_n} \left(\begin{array}{c}1\\0\\0\\\vdots\end{array}\right).
\end{eqnarray*}
\end{proof}
It is also important to note that if $x=\sum_{i=1}^n c_i\phi^{n-i}$ then the vector $\frac{1}{\lambda^n}WM_{c_1}\cdots M_{c_n}$ doesn't just hold information on $\mu(x)$, which is the first entry, but also holds information on the values of $\mu$ at other elements of $X(\phi)$. 
\begin{lemma}\label{vk}
For $v_k$ the $kth$ element of $V$ we have
\[
\mu(x+v_k)=\frac{1}{\lambda^n}(WM_{c_1}M_{c_2}\cdots M_{c_n})_k,
\]
that is, $\lambda^n\mu(x+v_k)$ is the $k$th entry in the $1\times 17$ vector $WM_{c_1}\cdots M_{c_n}$. 
\end{lemma}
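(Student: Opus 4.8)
The plan is to re-run the argument of Theorem~\ref{GoldenThm2} while tracking the whole vector of automaton states rather than just its first coordinate. Recall from the proof of Lemma~\ref{MatrixCountingLemma} that the products of $M_0,M_1,M_{-1}$ encode a finite automaton on the state set $V=\{v_1,\dots,v_{17}\}$: for a fixed code $c_1\cdots c_n$ of $x$, the relevant entry of the matrix product counts (with the usual weight $2$ for each occurrence of the digit $0$) the difference-words $d_1\cdots d_n\in\{-1,0,1\}^n$ whose partial orbit $S_\ell:=T_{c_\ell-d_\ell}\circ\cdots\circ T_{c_1-d_1}(0)=\sum_{i=1}^\ell(c_i-d_i)\phi^{\ell-i}$ runs from the state $v_1=0$ to a prescribed final state. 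Taking the final state to be $v_1=0$ recovers $\mathcal N_n(x)$; taking it to be $v_k$ forces $\sum_{i=1}^n d_i\phi^{n-i}=x-v_k$, so the corresponding entry counts the length-$n$ representations of $x-v_k$. Since the reachable set $V$ is symmetric about $0$ (the involution $d_i\mapsto -d_i$ preserves it, with $v_1=0$ fixed), relabelling $v_k\leftrightarrow -v_k$ lets us read this equally as a count of representations of $x+v_k$, which is the labelling used in the statement.

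With this in hand I would record the exact analogue of the key identity from Theorem~\ref{GoldenThm2}: padding the code of $x$ by $m$ leading zeros (i.e. using $x=\sum_{i=1}^nc_i\phi^{n-i}+0\phi^n+\cdots+0\phi^{n+m-1}$) and applying Lemma~\ref{MatrixCountingLemma} with the $k$-th target state gives
\[
\mathcal N_{n+m}(x+v_k)=\bigl(M_0^m M_{c_1}\cdots M_{c_n}\bigr)_{1,k}=(1~0~\cdots~0)\,M_0^m M_{c_1}\cdots M_{c_n}\,e_k ,
\]
where $e_k$ is the $k$-th standard basis column vector. The point that the automaton captures \emph{all} such representations, not only those whose partial orbit stays bounded, is inherited verbatim from Lemma~\ref{MatrixCountingLemma}: if $|S_\ell|\ge 2\phi$ for some $\ell$ then $|S_{\ell'}|\ge 2\phi$ for every $\ell'\ge\ell$ (the $T_i$ expand outside $[-2\phi,2\phi]$), so the final state could not lie in $V\subset(-2\phi,2\phi)$ — whereas here the final state is $v_k\in V$. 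Now, exactly as in Theorem~\ref{GoldenThm2}, primitivity of $M_0$ yields $\tfrac1{\lambda^m}(1~0~\cdots~0)M_0^m\to W$, and hence
\[
\mu(x+v_k)=\lim_{m\to\infty}\frac1{\lambda^{n+m}}\mathcal N_{n+m}(x+v_k)=\frac1{\lambda^n}\,WM_{c_1}\cdots M_{c_n}\,e_k=\frac1{\lambda^n}\bigl(WM_{c_1}\cdots M_{c_n}\bigr)_k ,
\]
which is the claim; the case $k=1$ is Theorem~\ref{GoldenThm2}.

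Given Theorem~\ref{GoldenThm2} and Lemma~\ref{MatrixCountingLemma}, the analytic content is essentially nil — the limit and the boundedness argument are identical to those already carried out. The only genuinely delicate step, and the one I would treat carefully, is the combinatorial bookkeeping of the first and second paragraphs: namely verifying that the $k$-th coordinate of the state vector corresponds to representations of $x+v_k$ \emph{with the correct sign}. I would settle this by tracking a single pair — the automaton state $S$ just after the $m$ padded zeros and the state $S_{n+m}$ after the full word — to check directly that a difference-word of length $n+m$ represents $x\pm v_k$ precisely when $S_{n+m}=v_k$, the sign being pinned down by the orientation convention fixed in Lemma~\ref{MatrixCountingLemma}, and then absorbing any residual sign into the symmetry $v_k\leftrightarrow -v_k$ of $V$. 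No idea beyond those of Theorem~\ref{GoldenThm2} is needed.
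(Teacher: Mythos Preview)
Your proposal is correct and takes essentially the same approach as the paper: the paper's own proof is a single line pointing to the proof of Theorem~\ref{GoldenThm2} together with equation~(\ref{CD3}), which is precisely the combinatorial identification of the $k$th coordinate that you spell out in detail. Your sign/symmetry detour is unnecessary---tracing the matrix definitions (or, equivalently, the general-case computation in Lemma~\ref{AiTransitions}) shows the state tracks $y-x$ and hence gives $x+v_k$ directly---but it is harmless.
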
 
\begin{proof}
This follows directly from the proof of the previous lemma and equation \ref{CD3}.
\end{proof}
This allows us to start to discuss local structure for $\mu$. We want to describe how one can use dynamics to move through the measure $\mu$ and write down the set of pairs $\{(x,\mu(x)):x\in X(\phi)\}$. To do this, we must first recall the cut and project structure of the set $X(\phi)$.

\subsection{The Structure of $X(\phi)$}

The work of this subsection is well known to experts. We first show that set $X(\phi)$ can be dynamically generated. One can move from a level-$n$ sum to a level-$(n+1)$ sum in the construction of $X(\phi)$ by observing that
\[
\sum_{i=1}^{n+1} c_i\phi^{n+1-i}=\phi\left(\sum_{i=1}^n c_i\phi^{n-i}\right)+c_{n+1}.
\]
Thus with $T_i(x):=\phi x+i$ as before we see that
\begin{equation}\label{Tform}
X(\phi)=\{T_{c_n}\circ\cdots \circ T_{c_1}(0):n\in\mathbb N, c_i\in\{-1,0,1\}\}. 
\end{equation}
As $\phi^2=\phi+1$ we can consider multiplication by $\phi$ in terms of its action on numbers of the form $z_1\phi+z_0$. We let $\pi_e:\mathbb Z^2\to \mathbb R$ be given by \[\pi_e\left(\begin{array}{c}z_1\\z_0\end{array}\right):=z_1\phi+z_0\]
and $\pi_c:\mathbb Z^2\to\mathbb R$ be given by
\[
\pi_c\left(\begin{array}{c}z_1\\z_0\end{array}\right):=\frac{-1}{\phi}z_1+z_0.
\]
We will later refer to $\pi_e$ as projection in the expanding direction and $\pi_c$ as projection in the contracting direction. Note that $\pi_e:\mathbb Z^2\to \mathbb R$  and $\pi_c:\mathbb Z^2\to\mathbb R$ are injective (if they were not then $x^2-x-1$ would not be the minimal polynomial of $\phi$).

Then
\[
\phi\left(\pi_e\left(\begin{array}{c}z_1\\z_0\end{array}\right)\right)= z_1\phi^2+z_0\phi= (z_1+z_0)\phi+z_1=\pi_e\left(\left(\begin{array}{cc}1& 1\\1& 0\end{array}\right)\left(\begin{array}{c}z_1 \\ z_0\end{array}\right)\right)
\] 
and so $T_i:X(\phi)\to X(\phi)$ lifts to a map $\tilde T_i:\mathbb Z^2\to\mathbb Z^2$ given by
\[
\tilde T_i\left(\begin{array}{c}z_1\\z_0\end{array}\right)=\left(\begin{array}{cc}1& 1\\1& 0\end{array}\right)\left(\begin{array}{c}z_1 \\ z_0\end{array}\right)+\left(\begin{array}{c}0\\i\end{array}\right).
\]
We let 
\[
\tilde X(\phi):=\left\lbrace\tilde T_{c_n}\circ\cdots\circ \tilde T_{c_1}\left(\begin{array}{c}0\\0\end{array}\right):n\in\mathbb N, c_i\in\{-1,0,1\}\right\rbrace
\]
and have the relation $X(\phi)=\pi_e(\tilde X(\phi))$.

One can study the structure of $X(\phi)$ directly on the real line, this was done for example in \cite{FengWen} where the substitution structure of $X(\phi)$ was described. However, some properties of $X(\phi)$ are easier to see if we first study the structure of $\tilde X(\phi)$. For example, from equation (\ref{Tform}) we see that the uniformly discrete set $X(\phi)$ is a subset of the dense set $\{z_1\phi+z_0:z_1,z_0\in\mathbb Z\}$, but it is not immediately apparent which values of $(z_1,z_0)$ correspond to points in $X(\phi)$.

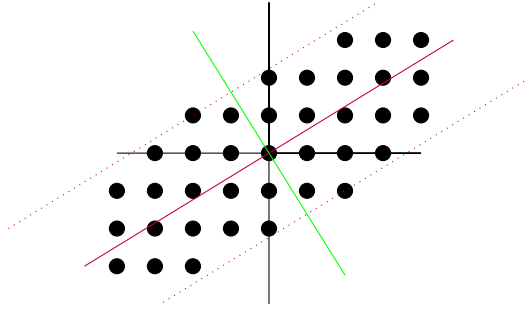
\begin{figure}\label{GoldenLift}
\centering
\begin{tikzpicture}[scale=0.5]
\draw(-4,0)--(0,0)--(0,4)--(0,0)--(4,0)--(0,0)--(0,-4);
\node[draw,circle,inner sep=2pt,fill] at (0,0){};
\node[draw,circle,inner sep=2pt,fill] at (0,1){};
\node[draw,circle,inner sep=2pt,fill] at (0,-1){};
\node[draw,circle,inner sep=2pt,fill] at (-1,-1){};
\node[draw,circle,inner sep=2pt,fill] at (-1,0){};
\node[draw,circle,inner sep=2pt,fill] at (-1,1){};
\node[draw,circle,inner sep=2pt,fill] at (1,-1){};
\node[draw,circle,inner sep=2pt,fill] at (1,0){};
\node[draw,circle,inner sep=2pt,fill] at (1,1){};
\node[draw,circle,inner sep=2pt,fill] at (1,2){};
\node[draw,circle,inner sep=2pt,fill] at (2,1){};
\node[draw,circle,inner sep=2pt,fill] at (2,0){};
\node[draw,circle,inner sep=2pt,fill] at (2,2){};
\node[draw,circle,inner sep=2pt,fill] at (0,2){};
\node[draw,circle,inner sep=2pt,fill] at (-1,-2){};
\node[draw,circle,inner sep=2pt,fill] at (-2,-1){};
\node[draw,circle,inner sep=2pt,fill] at (-2,0){};
\node[draw,circle,inner sep=2pt,fill] at (-2,-2){};
\node[draw,circle,inner sep=2pt,fill] at (0,-2){};
\node[draw,circle,inner sep=2pt,fill] at (3,1){};
\node[draw,circle,inner sep=2pt,fill] at (3,2){};
\node[draw,circle,inner sep=2pt,fill] at (3,0){};
\node[draw,circle,inner sep=2pt,fill] at (3,3){};
\node[draw,circle,inner sep=2pt,fill] at (2,3){};
\node[draw,circle,inner sep=2pt,fill] at (4,2){};
\node[draw,circle,inner sep=2pt,fill] at (4,3){};
\node[draw,circle,inner sep=2pt,fill] at (4,1){};
\node[draw,circle,inner sep=2pt,fill] at (2,-1){};
\node[draw,circle,inner sep=2pt,fill] at (-3,-1){};
\node[draw,circle,inner sep=2pt,fill] at (-3,-2){};
\node[draw,circle,inner sep=2pt,fill] at (-3,0){};
\node[draw,circle,inner sep=2pt,fill] at (-3,-3){};
\node[draw,circle,inner sep=2pt,fill] at (-2,-3){};
\node[draw,circle,inner sep=2pt,fill] at (-4,-2){};
\node[draw,circle,inner sep=2pt,fill] at (-4,-3){};
\node[draw,circle,inner sep=2pt,fill] at (-4,-1){};
\node[draw,circle,inner sep=2pt,fill] at (-2,1){};
\draw[green](-2, 3.236)--(2, -3.236);
\draw[purple](4.854,3)--(-4.854,-3);
\draw[dotted,purple] (-6.854,-2)--(2.854,4);
\draw[dotted,purple] (6.854,2)--(-2.854,-4);
\end{tikzpicture}
\caption{The set $\tilde X(\phi)$ around the origin, with expanding and contracting eigenvectors shown}
\end{figure}

Lifting to $\tilde X(\phi)$ the structure becomes clear. The matrix $\left(\begin{array}{cc}1& 1\\1& 0\end{array}\right)$ has one expanding eigenvector and one contracting eigenvector, and the maps $\tilde T_i$ can be described in terms of their action on points written in terms of these eigenvectors.  


Note that if $\pi_c\left(\begin{array}{c}z_1\\z_0\end{array}\right)=x$ then \[\pi_c(\tilde T_i\left(\begin{array}{c}z_1\\z_0\end{array}\right))=\frac{-x}{\phi}+i=:S_i(x).\] Then the system $\{S_0, S_1, S_{-1}\}$ is a contracting iterated function system with attractor $[-\phi^2,\phi^2]$, and so for any point $\left(\begin{array}{c}z_1\\z_0\end{array}\right)=\tilde T_{a_n}\circ \cdots \tilde T_{a_1}\left(\begin{array}{c}0\\0\end{array}\right)\in X(\phi)$ we have $\pi_c\left(\begin{array}{c}z_1\\z_0\end{array}\right)= S_{a_n}\circ \cdots S_{a_1}(0)\in (-\phi^2,\phi^2)$. The converse is also true and is contained in the following lemma.

\begin{lemma}\label{XPhiStructure}
The set $\tilde X(\phi)$ consists of all pairs $\left(\begin{array}{c}z_1\\z_0\end{array}\right)\in\mathbb Z^2$ for which $\pi_c\left(\begin{array}{c}z_1\\z_0\end{array}\right)$  lies in the interval $(-\phi^2,\phi^2)$. 

Furthermore, if $\pi_c\left(\begin{array}{c}z_1\\z_0\end{array}\right)\in S_{d_1}\circ \cdots S_{d_k}(-\phi^2,\phi^2)$ for some $d_1,\cdots, d_k\in\{-1,0,1\}^k$ then for all sufficiently large $n$ there exists a word $c_1\cdots c_{n+k}\in\{-1,0,1\}^{n+k}$ with $c_{n+k}\cdots c_1=d_1\cdots d_k$ and such that 
\[
\left(\begin{array}{c}z_1\\z_0\end{array}\right)=\tilde T_{c_{n+k}}\circ \cdots \circ T_{c_1}\left(\begin{array}{c}0\\0\end{array}\right)
\]
\end{lemma}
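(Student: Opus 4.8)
# Proof Proposal for Lemma \ref{XPhiStructure}

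The plan is to prove both directions of the set equality and then upgrade to the coding statement. For the forward inclusion, I already observed in the text that any $\left(\begin{smallmatrix} z_1\\z_0\end{smallmatrix}\right)\in\tilde X(\phi)$ satisfies $\pi_c\left(\begin{smallmatrix} z_1\\z_0\end{smallmatrix}\right)=S_{a_n}\circ\cdots\circ S_{a_1}(0)$, and since $\{S_{-1},S_0,S_1\}$ is a contracting IFS whose attractor is the interval $[-\phi^2,\phi^2]$ (which one checks directly: $S_i([-\phi^2,\phi^2]) = [-\phi^2 \cdot (1/\phi) + i,\ \phi^2\cdot(1/\phi)+i] \subseteq [-\phi^2,\phi^2]$ for $i\in\{-1,0,1\}$, with the union over $i$ covering the whole interval), every such image lies in $(-\phi^2,\phi^2)$ — strict because $0$ is in the interior and the maps are contractions. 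For the reverse inclusion, the key step is: given $\left(\begin{smallmatrix} z_1\\z_0\end{smallmatrix}\right)\in\mathbb Z^2$ with $\pi_c$-image in $(-\phi^2,\phi^2)$, I want to realise it as $\tilde T_{c_n}\circ\cdots\circ\tilde T_{c_1}(0)$. I would run the dynamics \emph{backwards}: since $\left(\begin{smallmatrix}1&1\\1&0\end{smallmatrix}\right)$ is invertible over $\mathbb Z$ with inverse $\left(\begin{smallmatrix}0&1\\1&-1\end{smallmatrix}\right)$, define $\tilde T_i^{-1}\left(\begin{smallmatrix} z_1\\z_0\end{smallmatrix}\right) = \left(\begin{smallmatrix}0&1\\1&-1\end{smallmatrix}\right)\left(\left(\begin{smallmatrix} z_1\\z_0\end{smallmatrix}\right) - \left(\begin{smallmatrix}0\\i\end{smallmatrix}\right)\right)$, which again lands in $\mathbb Z^2$. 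On the contracting side this corresponds to the expanding map $S_i^{-1}(x) = -\phi(x-i)$. The point $\pi_c$-image starts in $(-\phi^2,\phi^2)$; I must choose, at each step, an $i\in\{-1,0,1\}$ so that after applying $S_i^{-1}$ we stay in $(-\phi^2,\phi^2)$ — this is possible precisely because $\bigcup_i S_i((-\phi^2,\phi^2)) \supseteq (-\phi^2,\phi^2)$ (equivalently $\bigcup_i S_i([-\phi^2,\phi^2]) = [-\phi^2,\phi^2]$), so every point in the interval is $S_i$ of some point in the interval, i.e. has a valid $S_i^{-1}$-preimage inside. Meanwhile on the expanding side $\pi_e$ of the backward orbit is $(1/\phi)^n$ times bounded data (since $\tilde T_i^{-1}$ contracts the expanding eigendirection), so after finitely many steps the $\mathbb Z^2$ point $\tilde T_{c_1}^{-1}\circ\cdots\circ\tilde T_{c_n}^{-1}\left(\begin{smallmatrix} z_1\\z_0\end{smallmatrix}\right)$ has $\pi_e$-image in a tiny neighbourhood of $0$ and $\pi_c$-image in the bounded interval $(-\phi^2,\phi^2)$; since both projections are injective on $\mathbb Z^2$ and this bounded region contains only finitely many lattice points, and the backward orbit must eventually cycle among them, one shows it actually reaches $\left(\begin{smallmatrix}0\\0\end{smallmatrix}\right)$ (the only lattice point with $\pi_e$-image arbitrarily close to $0$, hence equal to $0$, hence $\pi_c$-image $0$ too). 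Reading this chain of backward moves forwards gives the desired coding.

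For the "furthermore" part: suppose $\pi_c\left(\begin{smallmatrix} z_1\\z_0\end{smallmatrix}\right)\in S_{d_1}\circ\cdots\circ S_{d_k}(-\phi^2,\phi^2)$. Apply $S_{d_k}^{-1},\dots,S_{d_1}^{-1}$ in turn (these are exactly the first $k$ backward steps, forced by the hypothesis since $S_{d_j}^{-1}$ maps $S_{d_1}\circ\cdots\circ S_{d_k}(-\phi^2,\phi^2)$ appropriately), landing at a lattice point $\left(\begin{smallmatrix} z_1'\\z_0'\end{smallmatrix}\right) = \tilde T_{d_1}^{-1}\circ\cdots\circ\tilde T_{d_k}^{-1}\left(\begin{smallmatrix} z_1\\z_0\end{smallmatrix}\right)$ whose $\pi_c$-image is back in $(-\phi^2,\phi^2)$. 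By the set equality just proved, this point lies in $\tilde X(\phi)$, so there is a word $e_1\cdots e_m$ with $\left(\begin{smallmatrix} z_1'\\z_0'\end{smallmatrix}\right) = \tilde T_{e_m}\circ\cdots\circ\tilde T_{e_1}(0)$; padding with leading zeros (using $\tilde T_0$, which fixes $0$ only if — careful — actually $\tilde T_0(0)=0$, so prepending $0$'s is harmless) we may take $m = n$ as large as we like. Then $\left(\begin{smallmatrix} z_1\\z_0\end{smallmatrix}\right) = \tilde T_{d_k}\circ\cdots\circ\tilde T_{d_1}\circ\tilde T_{e_n}\circ\cdots\circ\tilde T_{e_1}(0)$, and setting $c_1\cdots c_{n+k} := e_n\cdots e_1 d_1\cdots d_k$ (so that $c_{n+k}\cdots c_1 = d_1\cdots d_k$ after the prefix, matching the stated convention) gives the claim. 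I should double-check the index-reversal convention in the statement against the composition order $\tilde T_{c_{n+k}}\circ\cdots\circ\tilde T_{c_1}$ and adjust the relabelling accordingly; this is bookkeeping rather than mathematics.

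The main obstacle I anticipate is the "eventually reaches $0$" step in the reverse inclusion: I need to argue that the backward orbit, which I have confined to the finite set of lattice points with $\pi_c$-image in $(-\phi^2,\phi^2)$ and $\pi_e$-image going geometrically to $0$, genuinely terminates at the origin rather than merely entering a finite cycle or a point near but not at $0$. The clean way is: $\pi_e\big(\tilde T_{c_1}^{-1}\circ\cdots\circ\tilde T_{c_n}^{-1}\left(\begin{smallmatrix} z_1\\z_0\end{smallmatrix}\right)\big)$ is exactly $(-1/\phi)^n \pi_e\left(\begin{smallmatrix} z_1\\z_0\end{smallmatrix}\right) + (\text{bounded geometric tail from the } c_j)$, which tends to $0$; but this quantity is always the $\pi_e$-image of a lattice point, and the lattice points involved form a finite set, so for $n$ large the only possibility is that $\pi_e$ of the point is $0$, forcing the point to be $\left(\begin{smallmatrix}0\\0\end{smallmatrix}\right)$ by injectivity of $\pi_e$ on $\mathbb Z^2$. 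One minor subtlety to handle carefully: when I apply a backward step I must verify the preimage I select is unique or at least that \emph{some} valid choice keeps $\pi_c$ inside $(-\phi^2,\phi^2)$; the covering property $\bigcup_i S_i([-\phi^2,\phi^2]) = [-\phi^2,\phi^2]$ guarantees existence, and since the three images $S_{-1},S_0,S_1$ of $[-\phi^2,\phi^2]$ overlap only at endpoints while our points are strictly interior, the choice is in fact unique — worth stating to make the backward dynamics well-defined.
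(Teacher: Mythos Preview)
Your argument has a genuine gap at the step ``the backward orbit reaches the origin''. You claim that $\pi_e$ of the backward orbit tends to $0$, writing it as $(1/\phi)^n\pi_e(z) + (\text{bounded geometric tail})$ and asserting this tends to $0$. But the backward map on $\pi_e$ is the \emph{affine} contraction $x\mapsto (x-c)/\phi$, so after $n$ steps one has
\[
\pi_e(z_n)=\phi^{-n}\pi_e(z)-\sum_{j=1}^{n} c_j\,\phi^{-(n-j+1)},
\]
and while the first term goes to $0$, the tail is a bounded sum that converges to some value in $[-\phi,\phi]$, not to $0$. Concretely, the lattice point $w=\left(\begin{smallmatrix}-1\\0\end{smallmatrix}\right)$ has $\pi_c(w)=1/\phi\in(-\phi^2,\phi^2)$ and is \emph{fixed} by $\tilde T_1$; since $1/\phi\in S_1([-\phi^2,\phi^2])$, the backward choice $i=1$ is valid at every step, and a backward orbit starting at $w$ can stay at $w$ forever without reaching the origin. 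So neither ``$\pi_e\to 0$'' nor ``eventually in a finite set forces $\pi_e=0$'' is correct.

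This is precisely why the paper's proof does something you are trying to avoid: once the backward orbit has $\pi_c\in(-\phi^2,\phi^2)$ and $\pi_e\in[-\phi,\phi]$, there are only finitely many candidate lattice points, and the paper verifies \emph{by a finite direct calculation} that from each of them some choice of backward word reaches $\left(\begin{smallmatrix}0\\0\end{smallmatrix}\right)$. The paper even stresses immediately after the proof that this finite check is not automatic and must be done separately for each $\beta$. Your proposal needs to incorporate this check (or some equivalent verification); there is no general argument that circumvents it. A minor side remark: the images $S_i([-\phi^2,\phi^2])=[-\phi+i,\phi+i]$ overlap on intervals of positive length, not only at endpoints, so the backward step is genuinely non-unique --- this is not fatal, but your uniqueness claim should be dropped.
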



\begin{proof}
One inclusion was proved in the paragraph before the statement of this lemma.

Now let $(z_1,z_0)\in\mathbb Z^2$ have $\pi_c(z_1,z_0)\in (-\phi^2,\phi^2)$. We wish to find a word $c_1\cdots c_n$ such that \[\left(\begin{array}{c}z_1\\z_0\end{array}\right)=\tilde T_{c_n}\circ \cdots \tilde T_{c_1}\left(\begin{array}{c}0\\0\end{array}\right),\] or equivalently
\begin{equation}\label{return}
\left(\begin{array}{c}0\\0\end{array}\right)=\tilde T_{c_1}^{-1}\circ\cdots \tilde T_{c_n}^{-1}\left(\begin{array}{c}z_1\\z_0\end{array}\right).
\end{equation}

We first observe that for any $\left(\begin{array}{c}z_1\\z_0\end{array}\right)$ with $\pi_c\left(\begin{array}{c}z_1\\z_0\end{array}\right)\in(-\phi^2,\phi^2)$ and $\pi_e\left(\begin{array}{c}z_1\\z_0\end{array}\right)\in[-\phi,\phi]$ one can find words $c_1\cdots c_n$ such that Equation \ref{return} holds. Since there are only finitely many pairs $\left(\begin{array}{c}z_1\\z_0\end{array}\right)$ in this bounded region one can check this observation with a finite calculation.

Now let $\left(\begin{array}{c}z_1\\z_0\end{array}\right)$ have $\pi_c\left(\begin{array}{c}z_1\\z_0\end{array}\right)\in(-\phi^2,\phi^2)$, but place no restriction on $\pi_e\left(\begin{array}{c}z_1\\z_0\end{array}\right)\in(-\phi,\phi)$. By the IFS construction of the contracting interval, we can choose arbitrarily long words  $i_1\cdots i_n\in\{-1,0,1\}$ such that $\tilde T_{i_n}^{-1}\circ\cdots \tilde T_{i_1}^{-1}(\left(\begin{array}{c}z_1\\z_0\end{array}\right))$ still has contracting coordinate in the interval $(-\phi^2,\phi^2)$. But since inverse maps $\tilde T_i^{-1}$ contract the expanding direction, the expanding coordinate will eventually lie in $[-\phi,\phi]$, and by the previous paragraph we know that we can return to $\left(\begin{array}{c}0\\0\end{array}\right)$. Finally we not that if we had $\pi_c\left(\begin{array}{c}z_1\\z_0\end{array}\right)\in S_{d_1}\circ \cdots S_{d_k}(-\phi^2,\phi^2)$ then we can choose the word $i_1\cdots i_n$ to start with $d_1\cdots d_k$.

\end{proof}

It is worth stressing that the first three quarters of the preceeding proof generalises easily to any algebraic integer $\beta$, but the finite check that any integer pair suitably close\footnote{For hyperbolic non-Pisot $\beta$ we will also require that expansions of Galois conjugates are close to the origin, see section \ref{GeneralSec}.} to the origin can return to the origin under the maps $\tilde T_i^{-1}$ needs verifying for each $\beta$ and we don't know that it is always true. 

One interesting consequence of Lemma \ref{XPhiStructure} is that in order to understand the distance from some point $\tilde T_{c_n}\circ \cdots \tilde T_{c_1}\left(\begin{array}{c}0\\0\end{array}\right)$ to its close neighbours in $\tilde X(\phi)$, we need only to know about $\pi_c( \tilde T_{c_n}\circ \cdots \tilde T_{c_1}\left(\begin{array}{c}0\\0\end{array}\right))$. 

Given $x\in X(\phi)$ let $\tilde x$ denote the corresponding point in $\tilde X(\phi)$ and let $x_c=\pi_c(\tilde x)$. For $K\in\mathbb R$ let $x\in X(\phi)$. Call the set 
\[
(X(\phi)-x)\cap[-K,K]=\{y-x: y\in X(\phi), y-x\in[-K,K]\}
\]
the $K$-neighbourhood of $x$. 

\begin{lemma}\label{XPhiLocalStructure}[Local Structure for $X(\phi)$]
For any $K>0$ there exists a finite partition of $(-\phi^2,\phi^2)$ such that the $K$-neighbourhood of any $x\in X(\phi)$ depends only upon which partition element of $(-\phi^2,\phi^2)$ $x_c$ lies in.
\end{lemma}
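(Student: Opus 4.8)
The plan is to exploit the description of $\tilde X(\phi)$ given in Lemma \ref{XPhiStructure}: a pair $(z_1,z_0)\in\mathbb Z^2$ lies in $\tilde X(\phi)$ precisely when $\pi_c(z_1,z_0)\in(-\phi^2,\phi^2)$. Translation of $X(\phi)$ by $x$ lifts to translation of $\tilde X(\phi)$ by $\tilde x$, and $\pi_c$ is linear, so $\pi_c(\tilde y-\tilde x)=y_c-x_c$. Thus a point $y\in X(\phi)$ satisfies $y-x\in X(\phi)-X(\phi)$ with lift $\tilde y-\tilde x$, and the real-line position $y-x=\pi_e(\tilde y-\tilde x)$ together with its contracting coordinate $y_c-x_c$ determine which lattice point $\tilde y-\tilde x$ is (since $\pi_e\times\pi_c$ is injective on $\mathbb Z^2$). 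The key observation is therefore that the $K$-neighbourhood of $x$, as a subset of $\mathbb R$, is the $\pi_e$-image of
\[
\{w\in\mathbb Z^2: \pi_c(w)\in(-\phi^2,\phi^2)-x_c,\ \pi_e(w)\in[-K,K]\},
\]
which depends on $x$ only through the value $x_c$, and in fact only through which of finitely many sub-intervals of $(-\phi^2,\phi^2)$ the number $x_c$ lies in.

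First I would make the finite partition explicit. For each lattice point $w\in\mathbb Z^2$ with $\pi_e(w)\in[-K,K]$ — there are finitely many, since $\pi_e$ restricted to any horizontal strip of $\mathbb Z^2$ is proper, but one must first bound $|z_1|$: points of $\tilde X(\phi)$ have $\pi_c$-coordinate bounded, and together with $\pi_e(w)\in[-K,K]$ this bounds $w$ to a finite set, call it $F_K$ — the condition ``$w$ contributes to the $K$-neighbourhood of $x$'' is $\pi_c(w)+x_c\in(-\phi^2,\phi^2)$, i.e. $x_c\in(-\phi^2-\pi_c(w),\phi^2-\pi_c(w))$. These open intervals, as $w$ ranges over $F_K$, have finitely many endpoints; let $\mathcal P$ be the partition of $(-\phi^2,\phi^2)$ generated by all these endpoints. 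On each partition element the set of $w\in F_K$ for which the membership condition holds is constant, hence so is the $K$-neighbourhood $\{\pi_e(w):w\in F_K,\ \pi_c(w)+x_c\in(-\phi^2,\phi^2)\}$. That is the whole content of the lemma.

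The one genuine subtlety — and the step I expect to need the most care — is justifying that the relevant set of lattice points $F_K$ is finite and, more importantly, that it \emph{contains} every contribution to the $K$-neighbourhood: one must rule out ``spurious'' near-neighbours, i.e. lattice points $w$ with $\pi_e(w)\in[-K,K]$ but $|z_1|$ (equivalently $|\pi_c(w)|$) large. This is exactly where one uses that $\pi_c(\tilde y)$ and $\pi_c(\tilde x)$ both lie in the bounded interval $(-\phi^2,\phi^2)$, so $\pi_c(w)=\pi_c(\tilde y)-\pi_c(\tilde x)\in(-2\phi^2,2\phi^2)$; combined with $\pi_e(w)\in[-K,K]$ and the injectivity of $\pi_e\times\pi_c$ on $\mathbb Z^2$ (noted before Lemma \ref{XPhiStructure}, since $x^2-x-1$ is the minimal polynomial of $\phi$), this pins $w$ to a finite set depending only on $K$. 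After that the argument is the elementary interval-endpoint bookkeeping above. A remark worth adding is that this is precisely the standard fact that a cut-and-project set has finite local complexity, here made concrete via the one-dimensional window $(-\phi^2,\phi^2)$.
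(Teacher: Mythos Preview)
Your proof is correct and follows essentially the same approach as the paper: both use Lemma \ref{XPhiStructure} to translate the question into which integer translations $w\in\mathbb Z^2$ keep $\pi_c(\tilde x)+\pi_c(w)$ in the contracting window $(-\phi^2,\phi^2)$ while satisfying $\pi_e(w)\in[-K,K]$, and then observe that this depends on $x_c$ only through a finite partition. Your write-up is in fact more detailed than the paper's sketch, making explicit the finiteness of $F_K$ and the construction of the partition from the interval endpoints, and correctly identifies this as the finite local complexity of the cut-and-project set.
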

\begin{proof}
This follows from the analagous statement for $\tilde X(\phi)$, which has a fairly direct proof following Lemma \ref{XPhiStructure}, since one needs only to consider which translations in $\mathbb Z^2$ can be performed without leaving the contracting window or moving by a distance of more than $K$ in the expanding direction. 
\end{proof}


Finally, we outline how to use dynamics to describe the odometer map which maps $x\in X(\phi)$ to $\min\{y\in X(\phi):y>x\}$.

Let $d:X(\phi)\to \mathbb R^+$ denote the distance from $x\in X(\phi)$ to $\min\{y\in X(\phi):y>x\}$. That is, let $d$ be defined by
\[
d(x)=\min\{y\in X(\phi):y>x\}-x.
\]

\begin{prop}\label{XEvolution}
The odometer map $x\to x+d(x)$ on $X(\phi)$ lifts to the skew-product map $O:X(\phi)\times X_c(\phi)\to X(\phi)\times X_c(\phi)$ by
\[
\tilde d(x,x_c)=\left \lbrace \begin{array}{cc} (x+2\phi-3, x_c-\frac{2}{\phi}-3) & x_c\in[\phi,\phi^2 ]\\
(x+\phi-1, x_c-1-\frac{1}{\phi}) & x_c \in (0,\phi)\\
(x+2-\phi, x_c+2+\frac{1}{\phi}) & x_c\in [-\phi^2,0 ] \end{array} \right.
\]
\end{prop}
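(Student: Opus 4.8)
The plan is to reduce the computation of $d(x)$ to a statement about the contracting coordinate $x_c$ alone — using the lattice description of $\tilde X(\phi)$ in Lemma~\ref{XPhiStructure} — and then to close with a finite enumeration; the argument is essentially the three-gap (three-distance) theorem for the model set $X(\phi)$ made explicit. The first step is to record how a translation acts in the contracting coordinate. Since $\pi_e$ is an additive bijection of $\mathbb Z^2$ onto $\mathbb Z[\phi]$, for $t=a\phi+b\in\mathbb Z[\phi]$ with $a,b\in\mathbb Z$ translating $x\in X(\phi)$ by $t$ lifts to translating $\tilde x$ by $(a,b)^{T}$ in $\mathbb Z^2$, which changes $x_c=\pi_c(\tilde x)$ by
\[
\pi_c\!\left(\begin{array}{c}a\\b\end{array}\right)=-\frac{a}{\phi}+b=t-a\left(\phi+\tfrac1\phi\right)=t-a\sqrt5 .
\]
By Lemma~\ref{XPhiStructure}, $x+t$ again lies in $X(\phi)$ precisely when $x_c+\pi_c(a,b)\in(-\phi^2,\phi^2)$, so (the minimum being attained because $X(\phi)$ is uniformly discrete)
\[
d(x)=\min\bigl\{t\in\mathbb Z[\phi]:t>0,\ x_c+\pi_c(a,b)\in(-\phi^2,\phi^2)\bigr\},
\]
a quantity depending on $x_c$ only (this is the instance of Lemma~\ref{XPhiLocalStructure} that we use). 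It then suffices to partition $(-\phi^2,\phi^2)$ by the value of this minimum and to read off the corresponding contracting shift.

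Next I would pin down the admissible steps. Since $x_c$ and $x_c+\pi_c(a,b)$ both lie in an interval of length $2\phi^2$, an admissible $t$ has $|\pi_c(a,b)|<2\phi^2$; combined with $\pi_c(a,b)=t-a\sqrt5$ and the restriction $t<1$ (legitimate a posteriori, since one of the three numbers below is admissible for every $x_c$ and all three are $<1$), this forces $|a|\le2$. Checking $a\in\{-2,-1,0,1,2\}$ shows that the only elements of $\mathbb Z[\phi]\cap(0,1)$ with $|\pi_c|<2\phi^2$ are, in increasing order,
\[
(2,-3)\ \leftrightarrow\ 2\phi-3,\qquad (-1,2)\ \leftrightarrow\ 2-\phi,\qquad (1,-1)\ \leftrightarrow\ \phi-1,
\]
with contracting shifts $-\tfrac2\phi-3$, $\ 2+\tfrac1\phi$ and $\ -1-\tfrac1\phi$ respectively (the only remaining element of $\mathbb Z[\phi]\cap(0,1)$ with $|\pi_c|\le2\phi^2$ is $4-2\phi$, for which $|\pi_c|=2\phi^2$ exactly, and which exceeds $\phi-1$, so it never equals $d$). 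Hence $d(x)\in\{2\phi-3,\,2-\phi,\,\phi-1\}$ for every $x$.

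Finally I would compute, for each of the three, the set of $x_c$ for which the translate remains in the window: writing $x_c+\pi_c(a,b)\in(-\phi^2,\phi^2)$ and discarding the inequality automatic on $(-\phi^2,\phi^2)$ gives that $2\phi-3$ is admissible iff $x_c>\phi$, that $2-\phi$ is admissible iff $x_c<0$, and that $\phi-1$ is admissible iff $x_c>-1$. These conditions cover $(-\phi^2,\phi^2)$, and taking the smallest admissible $t$ at each point yields $d(x)=2\phi-3$ on $[\phi,\phi^2]$, $d(x)=\phi-1$ on $(0,\phi)$ and $d(x)=2-\phi$ on $[-\phi^2,0]$; pairing each with its contracting shift from the previous step produces exactly the three cases defining $O$.

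I expect the only real difficulty to be bookkeeping. The delicate point is making the enumeration watertight — in particular, ruling out every positive element of $\mathbb Z[\phi]$ below $\phi-1$ other than $2\phi-3$ and $2-\phi$ (for which one uses that small elements of $\mathbb Z[\phi]$ have large Galois conjugates, i.e. the identity $\pi_c(a,b)=t-a\sqrt5$) — together with the assignment of the shared endpoints $x_c\in\{-\phi^2,0,\phi,\phi^2\}$, which must be made consistent with whatever open/closed convention is adopted for $X_c(\phi)$. None of this is deep, but all of it is a finite verification that has to be carried out.
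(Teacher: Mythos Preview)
Your argument is correct and is exactly the approach the paper takes: invoke the cut-and-project description of $\tilde X(\phi)$ (Lemma~\ref{XPhiStructure} / Lemma~\ref{XPhiLocalStructure}) to see that $d(x)$ depends only on $x_c$, and then carry out a finite enumeration of the small positive elements of $\mathbb Z[\phi]$ to identify the three steps and the corresponding partition of the window. The paper's proof simply says ``it is a finite calculation to write down the map exactly'' without performing it, whereas you have actually done (and correctly recorded) that calculation, including the endpoint caveats.
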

We stress here that the action of $O$ on the contracting direction is of a uniquely ergodic interval exchange transformation.
\begin{proof}
The fact that there is some partition of $(-\phi^2, \phi^2)$ telling us how to evolve a skew-product map which is a lift of $d$ follows immediately from Lemma \ref{XPhiLocalStructure} with $K=\phi-1$. It is a finite calculation to write down the map exactly.
\end{proof}

\subsection{An Odometer map for $\mu$}
Proposition \ref{XEvolution} dealt with how one can move locally through the set $X(\phi)$ using only knowledge on the position in the contracting direction, we want to build a similar theorem which also incorporates knowlede of the values $\mu(x)$, we do this by building a cocycle over the odometer map $O$.

Given $x\in X(\phi)$ let $x_c$ denote the corresponding point in the contracting window $(-\phi^2,\phi^2)$. We recall from Lemma \ref{XPhiStructure} that for $x\in X(\phi)$ and for any word $d_{1}\cdots d_{k}$, $x$ can be written $x=\sum_{i=1}^n c_i\phi^{n-i}$ with $c_{n-k+1}\cdots c_n=d_k\cdots d_1$ if and only if $x_c\in S_{d_1}\circ \cdots \circ S_{d_n}(-\phi^2,\phi^2)$. 

Now let us map real $1\times 17$ vectors $U$ with strictly positive first entry onto the corresponding projective space by letting
\[
(U')_i=\dfrac{(U)_{i+1}}{(U)_1}
\]
for $(1\leq i\leq 16)$. in particular, we associate to each $x=\sum_{i=1}^n c_i\phi^{n-i}\in X(\phi)$ the corresponding vector $V(x)=(WM_{c_1}M_{c_2}\cdots M_{c_n})'$ considered as an element of real projective space. To be concrete, we define the $1\times 16$ vector $V(x)$ by
\[
(V(x))_i=\dfrac{(WM_{c_1}M_{c_2}\cdots M_{c_n})_{i+1}}{(WM_{c_1}M_{c_2}\cdots M_{c_n})_1}=\dfrac{\mu(x+v_i)}{\mu(x)}.
\]
It follows from the proofs of the previous two statements that these vectors do not depend on the choice of code $c_1\cdots c_n$ of $x$. We can also write $V(x)$ as a function $V(x_c)$ of the position in the contracting window. 

Consider the metric $d$ on the space of $1\times 16$ non-negative vectors by letting
\[
d(U,V)=\max_{i\in\{1,\cdots 16\}}|\ln(v_i)-\ln (u_i)|.
\]
Two vectors $U,V$ are at infinite distance from one another if there exist $i,j\in\{1\cdots 16\}$  such that $u_i=0$ $v_i\neq 0$ or $v_i=0$ $u_i\neq 0$.
\begin{lemma}\label{MatrixCondition}
Suppose that $A$ is a $17\times 17$ matrix with $A_{1,1}>0$ such that for any pair of parameters $(i,j)\in\{1,\cdots,17\}^2$ one of the following holds
\begin{enumerate}
\item $(i,j)$ is in a zero row, i.e. $(A)_{i',j}=0$ for all $i'\in\{1,\cdots,17\}$
\item $(i,j)$ is in a zero column, i.e. $(A)_{i,j'}=0$ for all $j'\in\{1,\cdots,17\}$
\item $(A)_{i,j}>0$.
\end{enumerate}
Then there exists a constant $C<1$ such that, for any $1\times 17$ vectors $U,V$ with positive first entries and with $d(U',V')<\infty$ we have
\[
d((UA)',(VA)')<Cd(U',V').
\]
Furthermore, there exists $K>0$ such that, for any any $1\times 17$ vectors $U,V$ with positive first entry (and possibly with $d(U',V')=\infty$), 
\[
d((UA)',(VA)')<K.
\] 
\end{lemma}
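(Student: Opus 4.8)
The plan is to recognize this as a projective-contraction (Birkhoff/Hilbert metric type) statement and prove it by a direct block-decomposition argument adapted to the metric $d$ at hand, which is the sup-metric on logarithms of the normalised coordinates. First I would reduce to studying the action of $A$ on the nonnegative cone: given $U,V$ with positive first entries and $d(U',V')<\infty$, the finiteness of $d$ means $U$ and $V$ have exactly the same support $\mathcal S\subseteq\{1,\dots,17\}$ (including $1\in\mathcal S$), and $u_i/v_i\in[e^{-D},e^D]$ for all $i\in\mathcal S$, where $D=d(U',V')$ (here I identify the $1\times16$ vector $U'$ with the coordinates $2,\dots,17$ of $U$ normalised by the first, so that ``$u_i$'' ranges over the relevant entries; note $u_1/v_1$ is automatically $1$ after normalisation but the ratio of the \emph{unnormalised} first entries is some fixed $t>0$, and $u_i = t\,v_i\cdot(\text{factor in }[e^{-D},e^D])$). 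Then $(UA)_j = \sum_{i} u_i A_{i,j}$, and I would split the column $j$ of $A$ into the rows $i$ where $A_{i,j}>0$. For such $i$, hypothesis (1)--(3) forces $i$ not to lie in a zero row, so by the trichotomy applied to the pair $(i,j')$ for the columns $j'$ we actually get that $A_{i,j'}>0$ or $j'$ is a zero column; the upshot is that the set of rows $i$ contributing to column $j$ is the same for every nonzero column $j$, call it $\mathcal I$, and moreover $\mathcal I\subseteq\mathcal S$ is impossible to guarantee in general --- so the genuinely needed observation is that every nonzero entry $A_{i,j}$ has $i$ ranging over a common index set, hence the weights $u_i$ appearing in $(UA)_j$ are a fixed sub-collection of the $u_i$'s, independent of $j$.

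With that structural fact in hand, the contraction estimate is the standard one: for each nonzero column $j$,
\[
\frac{(UA)_j}{(VA)_j} = \frac{\sum_{i\in\mathcal I} u_i A_{i,j}}{\sum_{i\in\mathcal I} v_i A_{i,j}}
\]
is a weighted average (with weights $v_i A_{i,j}\ge0$) of the ratios $u_i/v_i$, hence lies in $[\min_{i\in\mathcal I}(u_i/v_i),\,\max_{i\in\mathcal I}(u_i/v_i)]\subseteq[t e^{-D}, t e^{D}]$. Taking the ratio of the $j$-th normalised coordinate to the first normalised coordinate of $UA$ versus $VA$, the factor $t$ cancels, and $d((UA)',(VA)')\le$ (oscillation of the averaged ratios). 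The gain of a factor $C<1$ comes from Birkhoff's observation that an average of numbers in an interval $[a,b]$ with \emph{at least two strictly positive weights} whose relative sizes are bounded away from $0$ and $\infty$ cannot attain the endpoints: since all relevant $A_{i,j}>0$ and there are only finitely many matrices of this shape with entries in a bounded set (here entries in $\{0,1,2\}$, and the finitely many index-set configurations), there is a uniform $C<1$ with oscillation of the new ratios $\le C\cdot D$. Concretely I would cite or reproduce the elementary lemma: if $p_1,\dots,p_m\ge0$ not all zero and $p_{\min}/p_{\max}\ge\eta$ on the support, and $r_i\in[e^{-D},e^{D}]$, then $\big(\sum p_i r_i\big)/\big(\sum p_i\big)$ and a second such average built from possibly different (but comparably bounded) weights differ in $\log$ by at most $C(\eta,m) D$ with $C<1$; finiteness of the list of admissible $A$ makes $\eta$ uniformly positive.

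For the second, unconditional bound: if $d(U',V')=\infty$ the supports of $U$ and $V$ may differ, but $(UA)_j = \sum_i u_i A_{i,j}$ and $(VA)_j=\sum_i v_i A_{i,j}$, and after normalising by the (positive) first coordinates, each normalised coordinate of $UA$ is again a weighted average of ratios of \emph{unnormalised} coordinates of $U$ --- but now those ratios are unbounded. The point is instead that we only need an absolute bound on $d\big((UA)',(VA)'\big)$, i.e. on $\max_j|\log\frac{(UA)_j/(UA)_1}{(VA)_j/(VA)_1}|$, and since by the structural fact above $(UA)_1$ and $(UA)_j$ involve the \emph{same} set of input coordinates $\{u_i:i\in\mathcal I\}$ with positive coefficients bounded below by $1$ and above by $2$, we get $(UA)_j/(UA)_1\in[\tfrac12\cdot\tfrac{1}{17},\,2\cdot 17]$ (crudely), uniformly over all admissible $U$, and likewise for $V$; hence $d((UA)',(VA)')\le 2\log(2\cdot17^2)=:K$, a constant depending only on the finite family of admissible matrices. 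The main obstacle, and the step I would be most careful about, is verifying the structural claim that the nonzero entries of each admissible $A$ have their row-indices confined to a single common set across all nonzero columns --- this is exactly what hypotheses (1)--(3) encode (no ``mixed'' pattern where $(i,j)$ is nonzero but $(i,j')$ is zero with $j'$ not a zero column), and getting the quantifiers right there is the crux; everything after that is the routine Birkhoff averaging computation, which I would not grind through in full.
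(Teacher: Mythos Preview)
Your approach is essentially the same as the paper's: both recognise the lemma as a Birkhoff projective-contraction statement, the only wrinkle being that $A$ is not strictly positive but rather strictly positive on a block $\mathcal I\times\mathcal J$ (nonzero rows times nonzero columns) and zero elsewhere. The paper itself does not give a full proof either --- it cites Birkhoff, notes that the block case is ``a simple modification'', and defers details to the first author's thesis --- so your sketch is at a comparable level of detail and follows the same route.

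Two small points where your write-up drifts. First, the lemma concerns a \emph{single} fixed matrix $A$, so there is no ``finitely many matrices of this shape with entries in a bounded set'' and no reason to invoke entries in $\{0,1,2\}$; the constants $C$ and $K$ simply depend on the one matrix $A$ through $\min_{i\in\mathcal I,\,j\in\mathcal J}A_{i,j}$ and $\max_{i\in\mathcal I,\,j\in\mathcal J}A_{i,j}$. Your explicit bound $(UA)_j/(UA)_1\in[\tfrac{1}{34},34]$ belongs to the later application to $M_0^7$, not to this lemma. Second, the ``structural claim'' you flag as the main obstacle is in fact immediate from the trichotomy: if row $i$ is nonzero and column $j$ is nonzero then options 1 and 2 are excluded, so $A_{i,j}>0$; hence the positive entries form exactly the block $\mathcal I\times\mathcal J$. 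The step that actually requires a moment's care, if one insists on the paper's metric $d(U',V')=\max_j|\ln r_j-\ln r_1|$ rather than the Hilbert oscillation $\max_j\ln r_j-\min_j\ln r_j$, is translating Birkhoff's contraction between the two (they differ by at most a factor of $2$); one either verifies contraction for $d$ directly or notes that powers of $A$ retain the block hypothesis and have Birkhoff ratio tending to zero. Neither you nor the paper spells this out.
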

This lemma is proved carefully in section \ref{GeneralSec}. 
\begin{lemma}\label{M_0}
The matrix $M_0^7$ satisfies the condition of Lemma \ref{MatrixCondition}.
\end{lemma}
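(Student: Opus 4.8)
The plan is to verify the hypotheses of Lemma \ref{MatrixCondition} directly for the matrix $A = M_0^7$, that is, to check that for every pair $(i,j)\in\{1,\dots,17\}^2$ either the $j$th column of $M_0^7$ is identically zero, or the $i$th row of $M_0^7$ is identically zero, or the entry $(M_0^7)_{i,j}$ is strictly positive. First I would record the combinatorial meaning of the entries of powers of $M_0$: by the construction in Lemma \ref{MatrixCountingLemma} (taking all digits $c_i = 0$), $(M_0^m)_{i,j}$ counts — with the usual doubling weight on zero digits — the number of words $d_1\cdots d_m \in \{-1,0,1\}^m$ with $T_{-d_m}\circ\cdots\circ T_{-d_1}(v_i) = v_j$, equivalently paths of length $m$ from $v_i$ to $v_j$ in the digraph on $V = \{v_1,\dots,v_{17}\}$ whose edges are $v\to v'$ whenever $v' = S_d(v)$ for some $d\in\{-1,0,1\}$ (where I am using the contracting maps $S_d(x) = -x/\phi + d$ as in the preceding subsection, so that passing to powers of $M_0$ corresponds to following the IFS $\{S_{-1},S_0,S_1\}$ forward). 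A column $j$ of $M_0^m$ is zero for all $m\geq 1$ iff $v_j$ has no incoming edge in this digraph, i.e. $v_j$ is not in the image of any $S_d$ restricted to $V$; a row $i$ is zero iff $v_i$ has no outgoing edge, i.e. no $S_d(v_i)$ lies in $V$. So the claim reduces to: for the subset $V_{\mathrm{good}}\subseteq V$ of vertices that have both an incoming and an outgoing edge, the induced subgraph on $V_{\mathrm{good}}$ is such that every ordered pair (including loops) is joined by a path of length exactly $7$.

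The key computational step, then, is to exhibit the $17$ points of $V$ explicitly and write down the adjacency structure. The set $V$ is exactly the set of points of the form $S_{d_k}\circ\cdots\circ S_{d_1}(0)$ lying in $(-\phi^2,\phi^2)$ with all $d_i\in\{-1,0,1\}$ — equivalently, $X(\phi)\cap(-2\phi,2\phi)$ under the correspondence in Lemma \ref{MatrixCountingLemma} — and these $17$ values can be listed by a finite search (they are $\mathbb{Z}[\phi]$-points $z_1\phi + z_0$ with bounded coordinates). Having the list, one builds the $17\times 17$ zero/nonzero pattern of $M_0$, identifies $V_{\mathrm{good}}$ by deleting sources and sinks (and iterating, since deletions may create new sources/sinks), and then checks that $M_0$ restricted to $V_{\mathrm{good}}$ is \emph{primitive} with index at most $7$, i.e. that the $7$th power of its $0/1$ pattern is all-positive. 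Since $v_1 = 0 \in V_{\mathrm{good}}$ and $(M_0)_{1,1} = 2 > 0$ (the word $d_1 = 0$ fixes $0$), the loop at $v_1$ guarantees aperiodicity, so primitivity is really just strong connectivity of the induced subgraph together with the diameter bound; the exponent $7$ is then whatever the computation yields (one expects it to be governed by the length of the longest "shortest path" between good vertices plus a short margin from the loop at $0$).

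I expect the main obstacle to be purely bookkeeping: correctly enumerating the $17$ points of $V$, correctly determining which of $S_{-1}(v_i), S_0(v_i), S_1(v_i)$ land back in $V$ (note $S_d$ maps $(-\phi^2,\phi^2)$ into itself, so the only way to leave is if the image is not one of the $17$ lattice points — but since $V$ is closed under the maps that keep you in the interval, in fact every $v_i$ with $S_d(v_i)\in(-\phi^2,\phi^2)$ has $S_d(v_i)\in V$, which means the only rows/columns that can be degenerate are boundary-type points like $\pm\phi^2$ if they are included as limits), and then verifying the all-positivity of the $7$th power. None of this is conceptually hard, but it must be done carefully, ideally with the aid of a computer algebra check; the proof in the paper should at minimum display the vertex set $V$ and the digraph, and then assert the (machine-verifiable) primitivity statement. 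Since the entries and structure are identical across all codes by the remarks following Theorem \ref{GoldenThm2}, once the pattern of $M_0^7$ is confirmed positive on $V_{\mathrm{good}}$, the three conditions of Lemma \ref{MatrixCondition} hold and we are done.
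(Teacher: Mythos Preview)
Your approach is correct and coincides with the paper's: the paper simply states ``This can be verified by a short calculation,'' and your proposal is precisely a plan for that calculation---enumerate $V$, write down the zero/nonzero pattern of $M_0$, and check that $M_0^7$ has the required block-positive structure.

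One point to fix: your reinterpretation of the transitions via the contracting maps $S_d(x)=-x/\phi+d$ on $(-\phi^2,\phi^2)$ is not right. The set $V$ consists of the $17$ points of $X_{\{-2,\dots,2\}}(\phi)$ in the \emph{expanding} interval $(-2\phi,2\phi)$, and the edges of $M_0$ are given by the expanding maps $T_k(x)=\phi x+k$ for $k\in\{-1,0,1\}$ (this is how $M_0$ is defined in Lemma~\ref{MatrixCountingLemma}). In particular your remark that ``$S_d$ maps $(-\phi^2,\phi^2)$ into itself, so the only way to leave is if the image is not one of the $17$ lattice points'' does not apply: under $T_k$ points near the boundary of $(-2\phi,2\phi)$ genuinely leave the interval, which is exactly what produces the zero rows (and by symmetry zero columns) you need to identify. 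This does not affect the validity of your computational strategy, only the narrative around it.
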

This can be verified by a short calculation.

One can also see that given a $17\times 17$ non-negative matrix $B$ withstrictly positive top left entry and two $1\times 17$ vectors $U$ and $V$ with strictly positive first entries,
\[
d((UA)',(VA)')\leq d(U',V').
\]
This shows that matrices $M_0$, $M_1$ and $M_{-1}$ do not expand distances between vectors in our metric.

Finally we are able to state Theorem \ref{Thm3} in the special case that $\beta=\phi$ and dealing only with nearest neighbours. Recall that, for $x\in X(\phi)$, $d(x):=\min\{y-x: y\in X(\phi),y>x\}$.

\begin{prop}
For $x\in X(\phi)$ with corresponding point $x_c\in(-\phi^2,\phi^2)$ define $f(x_c)$ by
\[
\ln(\mu(x+d(x)))-\ln (\mu(x))=f(x_c).
\]
Then $f$ is bounded and is continuous at each $x_c\in X_c(\phi)$ except for $0$ and $\phi$.  
\end{prop}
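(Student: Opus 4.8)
The plan is to identify $f$ with the logarithm of a single coordinate of the vectors $V(x_c)$ — the coordinate being the one selected by the interval‑exchange partition of Proposition~\ref{XEvolution} — and then to deduce continuity from the uniform cylinder‑variation estimate underlying the golden‑mean case of Theorem~\ref{Thm3}; the two exceptional points $0,\phi$ are precisely the breakpoints of that IET. First I would unwind the definitions. By Proposition~\ref{XEvolution}, $d(x)$ equals $2\phi-3$, $\phi-1$ or $2-\phi$ according as $x_c$ lies in the interior of $(\phi,\phi^2)$, $(0,\phi)$ or $(-\phi^2,0)$, and each of these three translations is one of the points $v_j\in V$; write $j_1,j_2,j_3$ for the corresponding indices. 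By Lemma~\ref{vk}, for $x=\sum_{i=1}^n c_i\phi^{n-i}$ we then have $\mu(x+d(x))/\mu(x)=(WM_{c_1}\cdots M_{c_n})_{j(x_c)}/(WM_{c_1}\cdots M_{c_n})_1$, where the selected index $j(x_c)\in\{j_1,j_2,j_3\}$ is locally constant on each of the three open intervals and has its only discontinuities at $0$ and $\phi$. Normalising the code‑independent vector $WM_{c_1}\cdots M_{c_n}$ to have first entry $1$ — call the result $U(x_c)$, a function of $x_c$ alone — we get $f(x_c)=\ln U(x_c)_{j(x_c)}$, which is finite since $x+d(x)\in X(\phi)$ forces $U(x_c)_{j(x_c)}>0$.

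For boundedness I would use that $\mathcal N_n(y)\le\mathcal N_n(0)$ for every $y$ (Cauchy--Schwarz for the self‑convolution defining $\mathcal N_n$), giving $\mu(y)\le\mu(0)$ on $X(\phi)$, together with a matching uniform lower bound $\mu(y)\ge c\,\mu(0)$, $c>0$: since $M_0^{7}$ satisfies the hypotheses of Lemma~\ref{MatrixCondition} (Lemma~\ref{M_0}) and the matrices $M_0,M_1,M_{-1}$ act non‑expansively in the metric of Lemma~\ref{MatrixCondition}, the normalised vector $U(x_c)$ is the image, under a product of these maps, of a vector lying in the fixed compact subset $\Phi_{M_0^{7}}\!\bigl(\{\text{non\mbox{-}negative vectors with first entry }1\}\bigr)$ of the positive cone, where $\Phi_A(U):=(UA)'$; this confines $U(x_c)_{j(x_c)}=\mu(x+d(x))/\mu(x)$ to a fixed interval $[e^{-K_0},e^{K_0}]$, so $|f|\le K_0$.

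For continuity at $x_c\in X_c(\phi)\setminus\{0,\phi\}$ the key is that every lattice point has an eventually‑zero $S$‑address: writing $x=\sum_{i=1}^n c_i\phi^{n-i}$ so that $x_c=S_{c_n}\circ\cdots\circ S_{c_1}(0)$, and using $S_0(0)=0$, for each $m$ we have $x_c=S_{c_n}\circ\cdots\circ S_{c_1}\circ S_0^{m}(0)$, hence $x_c$ lies in the interior (as $0$ is interior to $\mathcal R$) of the depth‑$(n+m)$ cylinder $S_{c_n}\circ\cdots\circ S_{c_1}\circ S_0^{m}(\mathcal R)$, whose address $c_n\cdots c_1 0^{m}$ contains $\lfloor m/7\rfloor$ disjoint copies of $w=0^7$. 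By Lemma~\ref{XPhiStructure}, $x$, and any $y$ whose $y_c$ lies in this cylinder, has a code of the form $(\text{prefix})\,0^{m}\,c_1\cdots c_n$, so $U(x_c)=\Phi_{M_{c_n}}\circ\cdots\circ\Phi_{M_{c_1}}\circ\Phi_{M_0^{m}}(Q_x)$ with $Q_x$ a positive‑first‑entry vector (and similarly $U(y_c)$ with $Q_y$). Splitting $\Phi_{M_0^{m}}$ into $\lfloor m/7\rfloor$ copies of the strict contraction $\Phi_{M_0^{7}}$ — whose image has finite $d$‑diameter $K$ by Lemmas~\ref{MatrixCondition} and \ref{M_0} — followed by non‑expansive maps yields $d\bigl(U(x_c),U(y_c)\bigr)\le K C^{\lfloor m/7\rfloor-1}$ for all $y_c\in X_c(\phi)$ in the cylinder, with $C<1$. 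Given $\varepsilon>0$, choose $m$ with $K C^{\lfloor m/7\rfloor-1}<\varepsilon$ and then $\delta>0$ so small that the $\delta$‑ball about $x_c$ lies inside the cylinder and inside the open partition interval containing $x_c$; then $|y_c-x_c|<\delta$, $y_c\in X_c(\phi)$ give $j(y_c)=j(x_c)$ and $|f(x_c)-f(y_c)|\le d\bigl(U(x_c),U(y_c)\bigr)<\varepsilon$. At $0$ and $\phi$, by contrast, the IET translation $d(\cdot)$ — and hence the selected index $j(\cdot)$ and the contracting coordinate of the translated point — jumps, so $f$ is generically discontinuous there, which is exactly why these two points are excluded.

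The hard part is the continuity argument, and its heart is the eventually‑zero‑address observation: this is what upgrades the uniform cylinder‑variation bound $KC^{r-1}$ (the golden‑mean form of the last clause of Theorem~\ref{Thm3}) into honest continuity at \emph{every} point of the lattice $X_c(\phi)$, not just at $\nu$‑almost every point of $\mathcal R$. Two smaller but non‑automatic points need care: the uniform two‑sided estimate on $U(x_c)_{j(x_c)}$ used for boundedness, and verifying that the discontinuities of $f$ at $0$ and $\phi$ are entirely accounted for by the jump of the IET partition (and the attendant window‑boundary effect on the translated point), so that no other $x_c\in X_c(\phi)$ is exceptional.
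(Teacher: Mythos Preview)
Your proposal is correct and follows essentially the same route as the paper. Both identify $f(x_c)$ as the logarithm of a ratio of entries of $WM_{c_1}\cdots M_{c_n}$, with the entry index fixed by the IET partition of Proposition~\ref{XEvolution}; both prove continuity by prepending zeros to a code of $x$ so that the code contains many copies of $0^7$, then invoke Lemma~\ref{XPhiStructure} to obtain a code for any nearby $y$ sharing this tail, and finally apply Lemmas~\ref{MatrixCondition} and~\ref{M_0} to get the bound $KC^{r-1}$. Your ``eventually-zero $S$-address'' observation is exactly the paper's ``adding lots of zeros to the start''.

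One remark on boundedness. The paper only argues pointwise finiteness of $f$ (both $\mu(x)$ and $\mu(x+d(x))$ are strictly positive) and does not supply a separate uniform bound; you go further and attempt one. However, the step ``$U(x_c)$ is the image under a product of non-expansive maps of a point in the $d$-bounded set $\Phi_{M_0^7}(\cdot)$, hence lies in a fixed interval'' does not literally work: non-expansive maps preserve \emph{distances between pairs}, not distance to a fixed reference, so the forward orbit of a bounded set under a non-expansive semigroup need not stay bounded. What the argument actually yields is $d(U(x_c),U(y_c))\le K$ for $y_c$ in the same cylinder $[c_n\cdots c_1\,0^7]$ as $x_c$ --- precisely the $r=1$ case of your continuity estimate --- which is not by itself a uniform bound on $f$. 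This is a small gap, but not one relative to the paper, which sidesteps the point entirely.
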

If we defined $d'$ on $(\phi^2,\phi^2)$ by $d'(x_c):=d(x)$ then $0$ and $\phi$ are the points in $(\phi^2,\phi^2)$ where $d'(x_c)$ is not continuous.
\begin{proof}
We have already shown that 
\[
d(x)=\left\lbrace\begin{array}{cc} 2\phi-3 & x_c\in[\phi,\phi^2)\\
\phi-1 & x_c\in(0,\phi)\\
2-\phi & x_c\in(-\phi^2,0] \end{array}\right.
\]
One can check that each of $2\phi-3, \phi-1$ and $2-\phi$ correspond to entries $v_k$ of $V$. Then by Lemma \ref{vk} we see that
\[
f(x_c):=\ln(\mu(x+d(x)))-\ln(\mu(x))
\]
appears as the log of a ratio of two entries in the vector $(WM_{c_1}\cdots M_{c_n})$ for any $c_1\cdots c_n$ coding $x$. Since both $x$ and $x+d(x)$ have strictly positive mass, the difference of the logs is finite so $f(x_c)\in\mathbb R$.

We now discuss the continuity properties of $f$. Let $x\in X(\phi)$ and $\epsilon>0$ be given. Let $K$ and $C$ be the quantities introduced in Lemma \ref{MatrixCondition} associated to $M_0^7$, and let $r\in\mathbb N$ be such that $KC^{r-1}<\epsilon$.  Let $c_1\cdots c_n$ be a code of $x$ containing at least $r$ copies of the word $0000000$, this can be done for example by taking any expansion of $x$ and adding lots of zeros to the start. 

Now $x_c$ is contained in the interval $S_{c_n}\circ S_{c_{n-1}}\circ \cdots \circ S_{c_1}(-\phi^2,\phi^2)$. Let $y\in X(\phi)$ be another point with $y_c\in S_{c_n}\circ S_{c_{n-1}}\circ \cdots \circ S_{c_1}(-\phi^2,\phi^2)$. Then $y$ can be written $y=\sum_{d=1}^m d_i\phi^{m-i}$ for some code $d_1\cdots d_m$ with $d_{m-n}\cdots d_n=c_1\cdots c_n$, as in Lemma \ref{XPhiStructure}.  

Assume that $x_c$ and $y_c$ lie in the same one of the intervals $(-\phi^2,0],$$(0,\phi)$, $[\phi,\phi^2)$ so that $d(x)=d'(x_c)=v_j$. Then
\begin{eqnarray*}
|f(x_c)-f(y_c)|&=& |\ln(WM_{c_1}\cdots M_{c_n})_j-\ln(WM_{d_1}\cdots M_{d_m})_j|\\
&=& |\ln(WM_{c_1}\cdots M_{c_n})_j-\ln(WM_{d_1}\cdots M_{d_{m-n-1}}M_{c_1}\cdots M_{c_n})_j|\\
&\leq & d((WM_{c_1}\cdots M_{c_n})', (\underbrace{WM_{d_1}\cdots M_{d_{m-n-1}}}_{=:U}M_{c_1}\cdots M_{c_n})')\\
&=& d((WM_{c_1}\cdots M_{c_n})', (UM_{c_1}\cdots M_{c_n})')
\leq KC^{r-1}<\epsilon. 
\end{eqnarray*}
Here the final line follows since $c_1\cdots c_n$ contains $r$ non-overlapping occurences of the word $M_0^7$, the first of which guarantees that \[d((WM_{c_1}\cdots M_{c_n})', (UM_{c_1}\cdots M_{c_n})')<K\] and the subsequent $r-1$ of which multiply this upper bound by $C$, thanks to Lemmas \ref{MatrixCondition} and \ref{M_0}. \end{proof}

We have now completed the proofs of analogues of Theorems \ref{Thm1}, \ref{Thm2}, and \ref{Thm3} in the special case of the golden mean, although the analogue of \ref{Thm3} we did only for moving to nearest neighbours. 

Putting everything together, we get the following theorem which demonstrates how one can move through the measure $\mu$ on $X(\phi)$, and how one could start to study it using ergodic theory.

\begin{theorem}\label{GMCocycle}
Let the map $\psi: X(\phi)\times(-\phi^2,\phi^2)\times \mathbb R$ be given by
\[
\phi (x,y,z)=\left\lbrace\begin{array}{cc} (x+2\phi-3,y-\frac{2}{\phi}-3,z+f(y)) & y\in[\phi,\phi^2)\\
(x+\phi-1, y-\frac{1}{\phi}-1,z+f(y)) & y\in(0,\phi)\\
(x+2-\phi, y+2+\frac{1}{\phi}, z+f(y)) & y\in(-\phi^2,0] \end{array}\right.
\]
Then if $x$ is the $n$th element to the right of $0$ in $X(\phi)$ we have that
\[
(x,x_c,\mu(x))=\psi^n(0,0,0).
\]
\end{theorem}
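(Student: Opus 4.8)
The plan is a direct induction on $n$, assembling Proposition~\ref{XEvolution} (which governs the first two coordinates) with a telescoping of the cocycle term (which governs the third). List the elements of $X(\phi)\cap[0,\infty)$ in increasing order as $0=x_0<x_1<x_2<\cdots$, so that $x_{k+1}=x_k+d(x_k)$ for every $k$, and recall that the associated contracting coordinates satisfy $(x_0)_c=\pi_c(0,0)=0$ and that, by Proposition~\ref{XEvolution}, $\big(x_k,(x_k)_c\big)\mapsto\big(x_{k+1},(x_{k+1})_c\big)$ is exactly the map obtained by discarding the $z$-coordinate of $\psi$: the three branches, the displacements of $x$, and the affine actions on the contracting coordinate coincide verbatim. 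Since the first two coordinates of $\psi(x,y,z)$ do not involve $z$, it follows by induction that the first two coordinates of $\psi^n(0,0,0)$ are $\big(x_n,(x_n)_c\big)$.

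For the third coordinate, note that each application of $\psi$ adds $f$ of the current contracting coordinate to $z$, so the third coordinate of $\psi^n(0,0,0)$ equals the Birkhoff-type sum $\sum_{k=0}^{n-1}f\big((x_k)_c\big)$. By the Proposition immediately preceding this theorem, $f\big((x_k)_c\big)=\ln\mu\big(x_k+d(x_k)\big)-\ln\mu(x_k)=\ln\mu(x_{k+1})-\ln\mu(x_k)$, which is finite because $\mu$ is strictly positive on $X(\phi)$ and which depends only on $(x_k)_c$ and not on any chosen code, by Lemma~\ref{vk} and Theorem~\ref{GoldenThm2}. The sum therefore telescopes to $\ln\mu(x_n)-\ln\mu(x_0)=\ln\mu(x_n)-\ln\mu(0)$. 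Normalising the left eigenvector $W$ of $M_0$ so that its first entry equals $1$, equivalently $\mu(0)=1$, this is precisely $\ln\mu(x_n)$, which is the content of the asserted identity (with the third slot of the output tuple read as $\ln\mu(x_n)$; without the normalisation one obtains $\ln(\mu(x_n)/\mu(0))$, and the statement should be interpreted accordingly). Combining the two paragraphs yields $\psi^n(0,0,0)=\big(x_n,(x_n)_c,\ln\mu(x_n)\big)$.

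The proof thus involves no new estimate; the only work is bookkeeping, and the part requiring care, which is the main obstacle, is the compatibility of the three half-open intervals $[\phi,\phi^2)$, $(0,\phi)$, $(-\phi^2,0]$ defining $\psi$ with the partition underlying Proposition~\ref{XEvolution}, so that the branch selected at each step genuinely reads off $d(x_k)$ and $(x_{k+1})_c$. One checks the base case $n=0$ directly ($\psi^0$ is the identity and $0=\ln\mu(0)$ after normalisation); one observes that the orbit of $0$ only ever requires values of $f$ and of $d$ at contracting coordinates of genuine points of $X(\phi)$, where both are defined, even at the two discontinuity points $0$ and $\phi$ of $f$; and one disposes of the closure conventions at $\pm\phi^2$ once and for all by noting that an orbit $S_{a_n}\circ\cdots\circ S_{a_1}(0)$ never hits $\pm\phi^2$, so the window may be taken half-open throughout and the discrepancy between $[-\phi^2,0]$ in Proposition~\ref{XEvolution} and $(-\phi^2,0]$ in $\psi$ is immaterial. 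With these routine verifications the theorem follows from the induction above.
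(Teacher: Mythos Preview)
Your proof is correct and is exactly the ``putting everything together'' that the paper intends: the paper gives no proof of this theorem beyond that phrase, and your induction using Proposition~\ref{XEvolution} for the first two coordinates together with the telescoping of $f$ for the third is the intended argument. You also correctly flag the slip in the statement---the third coordinate of $\psi^n(0,0,0)$ is $\ln\mu(x_n)-\ln\mu(0)$ rather than $\mu(x_n)$---and handle it via the natural normalisation.
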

Thus we have that many of the properties of $\mu$ can be studied by studying $\psi$, which is really a skew-product over an interval exchange transformation on the contracting window $(\-\phi^2,\phi^2)$.

\section{Measures on the spectra of general hyperbolic algebraic integers}\label{GeneralSec}

In this section we show how to extend the previous work to general hyperbolic algebraic integers and prove Theorems \ref{Thm1}, \ref{Thm2} and \ref{Thm3}. As stated in the introduction, the motivation is to study measures of the form

\[
\mu_n(x)=\dfrac{1}{4^n}\#\{a_1\cdots a_n, b_1\cdots b_n \in \{0,1\}^n: \sum_{i=1}^n (a_i-b_i)\beta^{n-i}=x\}.
\]

Given $\beta$, we lift $\mu_n$ to a measure $\bar{\mu}_n$ living on a lattice subset of a multidimensional euclidean space $\mathbb{K}$. We prove that there is $\gl>0$ such that $4^n\bar{\mu}_n/\gl^n$ converges to a measure $\bar{\mu}$. We also prove that there are local patterns in the measure $\bar{\mu}$ that repeat in a way that we understand. This means that we understand how the measure of a lattice point changes when we move to nearby points on the lattice\footnote{We don't state an analogue of Theorem \ref{GMCocycle} for the higher dimensional case since there is no natural choice of `next point' to move to when we are working in higher dimensional Euclidean space. One could state such results, perhaps by identifying a strip which is infinite in only one direction and describing the dynamics to move through such a strip.}. In particular there is a non-trivial linear subspace $\mathbb{K}_c$ of $\mathbb{K}$ such that the following holds. Under conditions and given a suitable vector $d$ then for typical $x$ the ratio $\frac{\bar{\mu}(x+d)}{\bar{\mu}(x)}$ is determined, up to certain accuracy, by the approximate position of the orthogonal projection of $x$ on $\mathbb{K}_c$. That is the numbers of the form $\frac{\bar{\mu}(x+d)}{\bar{\mu}(x)}$ are approximately equal for all $x$ projecting on to the same small region of $\mathbb{K}_c$.

Let $\gb=\gb_1\in(1,2)$ be an algebraic integer with Galois conjugates $\gb_2,...,\gb_d,\gb_{d+1},...,\gb_{d+s}$ such that $|\gb_2|,...,|\gb_d|>1$ and $|\gb_{d+1}|,...,|\gb_{d+s}|\in(0,1)$. Further define $\bar{\beta}^n=(\gb_1^n,...,\gb_{d+s}^n)$. For this section we let

\[T_i(x_1,...,x_{d+s})=(\beta_1x_1+i,...,\beta_{d+s}x_{d+s}+i),\] 

these maps are higher dimensional lifts of their analogues in the previous section. For Galois conjugates $\gb_i\in\mathbb{C}$ let $\mathbb{F}_{\gb_i}=\mathbb R$ if $\gb_i\in\mathbb R$ and $\mathbb{F}_{\gb_i}=\mathbb C$ if $\gb_i\in\mathbb C\setminus\mathbb R$. We define the sets 

\begin{align*}
\mathbb{K}&:=\prod_{i=1}^{d+s}\mathbb{F}_{\gb_i},\\
	\mathbb{K}_c&:=\{0\}^{d}\times \mathbb{F}_{\gb_{d+1}}\times...\times \mathbb{F}_{\gb_{d+s}},\\
\bar{Z}&:=\{a_{d+s-1}\bar{\gb}^{d+s-1}+...+a_0\bar{\gb}^0:a_{d+s-1},...,a_0\in\mathbb{Z}\}, 	
\end{align*}

and 
 \begin{align*}
 \bar{X}(\beta)&:=\left\{\sum_{i=1}^{n}a_i\bar{\gb}^{n-i}:n\in\mathbb{N},a_1...,a_n\in\{-1,0,1\}\right\}
 \\&=\left\{T_{a_n}\circ...\circ T_{a_1}( 0):n\in\mathbb{N},a_1...,a_n\in\{-1,0,1\}\right\}
 \end{align*}
 where $ 0$ denotes the origin in $\mathbb K$.
 
The set $\bar{Z}$ is a lattice in $\mathbb{K}\cong\mathbb{R}^{\sum_{i=1}^{d+s}\text{dim}(\mathbb{F}_{\gb_i})}$. That is because $\{\bar{\beta}^0,...,\bar{\beta}^{d+s-1}\}$ is an independent subset of the real vector space $\mathbb{K}$. That can be checked using the formula for the determinant of the Vandermonde matrix. It is useful to keep in mind that for each $i\in\mathbb{Z}$ we have $T_i(\bar{Z})\subseteq\bar{Z}$, in particular $\bar{X}(\beta)\subseteq\bar{Z}$. 


Notice that all coordinate projections, restricted on $\bar{Z}$, are injective so there is in a sense a natural identification of $\bar{Z}$ to any image of it under a coordinate projection. Here by a coordinate projection we mean any map from $\mathbb{K}$ to itself, of the form $(a_1,...,a_{d+s})\mapsto(a_1\gk_1,...,a_{d+s}\gk_{d+s})$ where $\gk_1,...,\gk_{d+s}\in\{0,1\}$. As in the one dimensional case, we define the measure $\bar{\mu}_n$ on $\bar{Z}$ by
 \[
 \bar{\mu}_{n}(x)=\frac{1}{4^n}\bar{\mathcal N}_n(x)
 \]
 where
 \begin{align*}
 \bar{\mathcal N}_{n}(x)=&\#\left\{(a_1,...,a_{n},b_1,...,b_{n})\in\{0,1\}^{2n}:\sum_{i=1}^{n}a_i\bar{\gb}^{n-i}-\sum_{i=1}^{n}b_i\bar{\gb}^{n-i}=x\right\},
 \end{align*}
  for $x\in\bar{Z}$. It is immediate that $\bar{\mu}_n(\bar{Z}\backslash{\bar{X}(\beta)})=0$, that $\bar{\mu}_n(x)=\mu_n(x_1)$ and $\bar{\mathcal N}_n(x)=\mathcal N_n(x_1)$. We set 
  \[\pi_c(x_1,\cdots ,x_{d+s})=(x_{d+1},...,x_{d+s})\]  to be the projection onto the contracting directions, and  $S_i:=(\pi_c\circ T_i)|_{\mathbb K_c}$. The maps $S_i$ are contractions.  Let $\mathcal{R}$ be the attractor of the overlapping iterated function scheme $\{S_{-1},S_{0},S_{1}\}$. We have immediately that
\begin{align*}
\pi_c(\bar{X}(\beta))&=\pi_c\left\{T_{a_n}\circ...\circ T_{a_1}(\underline 0):n\in\mathbb{N},a_1...,a_n\in\{-1,0,1\}\right\}\\
&=\left\{S_{a_n}\circ...\circ S_{a_1}(\underline 0):n\in\mathbb{N},a_1...,a_n\in\{-1,0,1\}\right\}\subset\mathcal R
\end{align*}
since $0\in\mathcal R$.

  	\begin{definition}\label{CylinderDef}
	Let $a=(a_1,...,a_n)\in\{-1,0,1\}^n$. We define $[a]:=S_{a_1}\circ...\circ S_{a_n}(\mathcal{R})$. 	
	\end{definition}

Finally we define a set of small differences between points in $\overline X(\beta)$. 
\begin{definition}\label{DeltaDef} Let
\begin{align*}
\Delta=\{x-y: &x,y\in \overline X(\beta) \text{ and } \\
&\exists c_1\cdots c_n, d_1\cdots d_n\in\{-1,0,1\}^n: T_{c_n}\circ \cdots T_{c_1}(x)=T_{d_n}\cdots T_{d_1}(y)\}.
\end{align*}
\end{definition}
That is, $\Delta$ is the set of differences between points $x,y\in \overline X(\beta)$ which can be mapped to the same point in the future by the application of maps $T_i$. $\Delta$ is finite, we write $\Delta=\{v_1,\cdots,v_k\}$ with $v_1=0$.

  In this section we prove Theorems \ref{Thm1}, \ref{Thm2} and \ref{Thm3} by proving higher dimensional analogues. In particular, in subsection \ref{sub1} we prove that, for some  $\gl>0$, the measure $\frac{\bar{\mu}_n}{\gl^n}$ converges to an infinite stationary measure $\bar{\mu}$ (Proposition \ref{measure exists}, which has Theorem \ref{Thm1} as a direct corollary. 
  
 In subsection \ref{sub3} we define matrices $A_{-1},A_{0},A_{1}$ playing the role of $M_{-1},M_{0},M_{1}$ of the Golden mean example. Given a point $x=T_{a_n}\circ...\circ T_{a_1}(0)$, where $a_i\in\{-1,0,1\}$, we use the matrix $A_{a_1}\cdot...\cdot A_{a_n}$ to compute the measure $\bar{\mu}$ locally around $x$ (Proposition \ref{prop2}),  which has Theorem \ref{Thm2} as a direct corollary. 
 
Finally in subsection \ref{LMCS} we show that information about the position of $\pi_c(x)$ determines the last few elements $a_{\gk},...,a_{n}$ of a code of $x$. This allow us to use arguments involving a modified Birkhoff metric on the product $A_{a_1}\cdot...\cdot A_{a_n}$ to estimate the local measure around $x$ based on information about $\pi_c(x)$. This gives rise to Proposition \ref{mainpro}, which has Theorem \ref{Thm3} as a corollary, as explained directly after the proof of Proposition \ref{mainpro}.

	\subsection{The limit measure $\bar{\mu}$}\label{sub1}
	




   We will denote the vector space of signed measures on $\bar{Z}$ by $\mathcal{M}(\bar{Z})$. For $\nu\in\mathcal{M}(\bar{Z})$ we set
 \begin{align*}
 ||\nu||=\sum_{x\in \bar{Z}}|v(x)|.
 \end{align*}
There is a recursive way to go from $\bar{\mu}_n$ to $\bar{\mu}_{n+1}$ which gives a dynamical description of $\bar{\mu}_n$. 
\begin{align*}
&\bar{\mu}_{n+1}(x)=\#\left\{(a_1,...,a_{n+1},b_1,...,b_{n+1})\in\{0,1\}^{2n}:\sum_{i=1}^{n+1}a_i\bar{\gb}^{n+1-i}-\sum_{i=1}^{n+1}b_i\bar{\gb}^{n+1-i}=x\right\}
\\=&\#\left\{(a_1,...,a_{n+1},b_1,...,b_{n+1})\in\{0,1\}^{2(n+1)}:T_{a_{n+1}-b_{n+1}}\left(\sum_{i=1}^{n}a_i\gb^{n-i}-\sum_{i=1}^{n}b_i\gb^{n-i}\right)=x\right\}
\\=&\sum_{(a,b)\in\{0,1\}^2}\#\left\{(a_1,...,a_n,b_1,...,b_n)\in\{0,1\}^{2n}:\sum_{i=1}^{n-1}a_i\gb^{n-i}-\sum_{i=1}^{n-1}b_i\gb^{n-i}=T_{a-b}^{-1}(x)\right\}
\\=&\sum_{(a,b)\in\{0,1\}^2}\bar{\mu}_{n}(T_{a-b}^{-1}(x)).
\end{align*}

\begin{definition}
We define the operator $L$ on $\mathcal M(Z)$ by letting
\begin{align*}
(L(\nu))(A):=\sum_{(a,b)\in\{0,1\}^2}\nu(T_{a-b}^{-1}(A)).
\end{align*}
for $A\subset Z$.
\end{definition}

Then $\bar{\mu}_n$ satisfies  
\begin{equation*}
\bar{\mu}_n=L^n\bar{\mu}_0.
\end{equation*}

\begin{lemma}\label{maximality of center}
	For all $n\in\mathbb{N}$ and $y\in\bar{X}(\beta)$ we have $\bar{\mu}_n(y)\leq\bar{\mu}_n(0)$.	
\end{lemma}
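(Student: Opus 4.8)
The plan is to prove this by a direct combinatorial injection argument, exploiting the symmetry of the digit set $\{0,1\}$. Fix $n$ and $y \in \bar X(\beta)$, and choose a code, i.e. a word $e_1 \cdots e_n \in \{-1,0,1\}^n$ with $\sum_{i=1}^n e_i \bar\beta^{n-i} = y$; such a code exists since $y \in \bar X(\beta)$. Recall that $\bar{\mathcal N}_n(y)$ counts pairs $(a,b) = (a_1 \cdots a_n, b_1 \cdots b_n) \in \{0,1\}^n \times \{0,1\}^n$ with $\sum_{i=1}^n (a_i - b_i)\bar\beta^{n-i} = y$, equivalently $\sum_{i=1}^n (a_i - b_i - e_i)\bar\beta^{n-i} = 0$.

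First I would set up the injection $\Phi$ from the set counted by $\bar{\mathcal N}_n(y)$ into the set counted by $\bar{\mathcal N}_n(0)$. The idea is: given a pair $(a,b)$ with $\sum (a_i - b_i)\bar\beta^{n-i} = y$, produce a pair $(a', b')$ with $\sum(a'_i - b'_i)\bar\beta^{n-i} = 0$ by "subtracting off" a fixed pair $(p, q)$ realising $y$. Concretely, since $y \in \bar X(\beta) \subseteq \bar X_{\{0,1\}}(\beta) - \bar X_{\{0,1\}}(\beta)$ — which one should check, or more safely argue coordinatewise in each position — one can hope to write $e_i = p_i - q_i$ with $p_i, q_i \in \{0,1\}$, but this is only possible when $e_i \in \{-1,0,1\}$, which it is. So fix such $p, q \in \{0,1\}^n$ with $p_i - q_i = e_i$. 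Then given $(a,b)$ counted by $\bar{\mathcal N}_n(y)$, define coordinatewise a new pair: whenever the "arithmetic" $a_i - b_i - e_i = a_i - b_i - p_i + q_i$ can be rewritten as $a'_i - b'_i$ with $a'_i, b'_i \in \{0,1\}$, do so. The obstacle is that $a_i - b_i - e_i$ can take values in $\{-2,-1,0,1,2\}$, and $\pm 2$ cannot be written as a difference of two $\{0,1\}$ digits, so a naive coordinatewise map fails.

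The fix — and I expect this to be the crux of the argument — is to observe that the map should be on pairs rather than coordinatewise, using the freedom to permute/reassign within the pair. A cleaner route: realise $\bar{\mathcal N}_n(y)$ via a transfer-matrix / walk description as in Lemma \ref{MatrixCountingLemma} and its higher-dimensional analogue (the matrices $M_{c_i}$, or $A_{c_i}$ in the general case), where $\bar{\mathcal N}_n(x) = (A_{c_1}\cdots A_{c_n})_{1,1}$ for the $0$-coordinate walk, while the same product of matrices has $\bar{\mathcal N}_n(y)$ appearing as some entry $(A_{c_1}\cdots A_{c_n})_{1,k}$ by Lemma \ref{vk}-type reasoning (the entry indexed by the translation $v_k = y$, when $y$ lies in the relevant finite window). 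Since the matrices $A_i$ have non-negative integer entries and, crucially, the diagonal/self-return structure, one shows $(A_{c_1}\cdots A_{c_n})_{1,k} \le (A_{c_1}\cdots A_{c_n})_{1,1}$. The inequality between a row entry and the diagonal entry would follow from a pointwise domination of the "return to $0$" walks over the "travel to $y$" walks: every walk from $0$ to $y$ of length $n$ can be completed, or rather mirrored, into a walk from $0$ to $0$.

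The concrete injection I would ultimately write down: given $(a_1\cdots a_n, b_1\cdots b_n)$ with $\sum(a_i-b_i)\bar\beta^{n-i} = y = \sum e_i \bar\beta^{n-i}$, note $\sum(a_i - b_i)\bar\beta^{n-i} - \sum(p_i - q_i)\bar\beta^{n-i} = 0$, i.e. $\sum\big((a_i + q_i) - (b_i + p_i)\big)\bar\beta^{n-i} = 0$. Now $a_i + q_i$ and $b_i + p_i$ lie in $\{0,1,2\}$, not $\{0,1\}$, so this still is not directly a pair counted by $\bar{\mathcal N}_n(0)$; however the sum $(a_i+q_i) + (b_i+p_i) = (a_i + b_i) + (p_i + q_i)$ and their difference determine them, and one checks that the value $\sum((a_i+q_i)-(b_i+p_i))\bar\beta^{n-i}=0$ together with the constraint on the alphabet $\{-2,\dots,2\}$ for the differences means we land in the $17$-point (resp. finite-$\Delta$) window at every prefix, exactly the regime where the matrix identity applies; then monotonicity of matrix entries under multiplication by the non-negative $A_i$ with $(A_i)_{1,1}>0$ gives the bound. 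I would double-check the base case $n=0$ ($\bar\mu_0$ is a point mass at $0$, so trivial) and then, since $\bar\mu_{n} = L^n\bar\mu_0$ and $L$ is given by a non-negative "averaging over $T_{a-b}^{-1}$" with $0$ a fixed point of each $T_0$, one can alternatively run an induction: $\bar\mu_{n+1}(y) = \sum_{(a,b)\in\{0,1\}^2}\bar\mu_n(T_{a-b}^{-1}(y))$, and $T_{a-b}^{-1}(y) \in \bar X(\beta)$ when defined, with $T_0^{-1}(0) = 0$ appearing among the preimages of $0$; combining $\bar\mu_n(T_{a-b}^{-1}(y)) \le \bar\mu_n(0)$ (induction hypothesis) with the fact that the preimages of $0$ include enough mass — here one must be careful that the four preimage terms for $0$ dominate the four for $y$, which again reduces to the same coordinatewise matching. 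The main obstacle throughout is handling the digit overflow to $\pm 2$; I expect the clean resolution is the transfer-matrix monotonicity statement, so I would invest the effort in phrasing the lemma in terms of entries of $A_{c_1}\cdots A_{c_n}$ and citing the non-negativity plus $(A_{c_i})_{1,1}>0$.
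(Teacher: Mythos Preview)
Your proposal does not close the argument. Each of the three routes you sketch runs into the same ``digit overflow to $\pm 2$'' obstacle, and your proposed resolution --- that non-negativity of the $A_i$ together with $(A_i)_{1,1}>0$ forces $(A_{c_1}\cdots A_{c_n})_{1,k}\le (A_{c_1}\cdots A_{c_n})_{1,1}$ --- is simply not a valid inference about products of non-negative matrices. A $2\times 2$ example such as $\left(\begin{smallmatrix}1&2\\0&1\end{smallmatrix}\right)$ already violates that inequality, and nothing you have said rules such behaviour out for the specific $A_i$. Moreover, the matrix description only sees translates $y\in\Delta$, whereas the lemma concerns all $y\in\bar X(\beta)$. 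The induction route fails for the same reason you note: bounding each $\bar\mu_n(T_{a-b}^{-1}(y))$ by $\bar\mu_n(0)$ only yields $\bar\mu_{n+1}(y)\le 4\bar\mu_n(0)$, and there is no coordinatewise matching of the four preimage terms of $y$ against the four preimage terms of $0$.

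The missing idea is to recognise $\bar\mu_n$ as an autocorrelation. Writing
\[
\mu'_n(x)=\#\Big\{a_1\cdots a_n\in\{0,1\}^n:\sum_{i=1}^n a_i\bar\beta^{\,n-i}=x\Big\},
\]
one has $\bar\mu_n(y)=\sum_{x\in\bar Z}\mu'_n(x)\mu'_n(x+y)$, and then Cauchy--Schwarz gives
\[
\bar\mu_n(y)\le\Big(\sum_x\mu'_n(x)^2\Big)^{1/2}\Big(\sum_x\mu'_n(x+y)^2\Big)^{1/2}=\sum_x\mu'_n(x)^2=\bar\mu_n(0).
\]
This is the paper's proof: two lines, no injections, no matrices. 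The point is that the maximum of a (non-negative) autocorrelation sequence is always attained at zero lag, and this completely sidesteps the overflow issue you were fighting.
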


\begin{proof}

This follows from the Cauchy-Schwarz inequality. Define 
\begin{equation*}
\mu'_n(x)=\#\left\{a_1,...,a_n\in\{0,1\}^n:\sum_{i=1}^na_i\bar{\gb}^{n-i}=x \right\}
\end{equation*}
By the construction of $\bar{\mu}_n$ we have that
\begin{align*}
\bar{\mu}_n(y)&=\sum_{x\in \bar{Z}}\mu'_n(x)\mu'_n(x+y)
\\&\leq\left(\sum_{x\in  \bar{Z}}\mu'_n(x)^2\right)^{1/2}
\left(\sum_{x\in  \bar{Z}}\mu'_n(x+y)^2\right)^{1/2}
\\&\leq\left(\sum_{x\in  \bar{Z}}\mu'_n(x)^2\right)^{1/2}
\left(\sum_{x\in  \bar{Z}}\mu'_n(x)^2\right)^{1/2}
\\&=\sum_{x\in  \bar{Z}}\mu'_n(x)^2
\\&=\sum_{x\in  \bar{Z}}\mu'_n(x)\mu'_n(x)
\\&=\bar{\mu}_n(0)
\end{align*}
\end{proof}


Now we prove that the measure $\bar{\mu}$ exists. To do this, we show that it exists on arbitrarily large neighbourhoods of the origin. Let

\[
  I_{\gb_i}(R)=\left\lbrace\begin{array}{cc}
    \left(\frac{-R}{||\gb_i|-1|},\frac{R}{||\gb_i|-1|}\right),& \quad\gb_i\in\mathbb{R}\backslash\{-1,1\}\\
   \left\{z\in\mathbb{C}:|z|<\frac{R}{||\gb_i|-1|}\right\},& \quad\gb_i\in\left\{z\in\mathbb{C}:|z|\neq1\right\}\backslash\mathbb{R}\end{array}\right. ,
\]
$B_{\beta}(R)=\Pi_{i=1}^{d+s}I_{\gb_i}(R)$, and  
\begin{align*}
\bar{X}_R(\beta)&:=\bar{X}(\beta)\cap B_{\gb}(R).
\end{align*}

Observe that \[T_i(\bar{X}(\beta)\backslash \bar{X}_R(\beta))\subseteq \bar{X}(\beta)\backslash \bar{X}_R(\beta)\] for $R\geq1$ and $i\in\{-1,0,1\}$. This means that, for $R>1$ and $x\in \bar{X}_R(\beta)$, any word $a_1\cdots a_n$ for which $T_{a_n}\circ \cdots T_{a_1}(0)=x$ has that all the intermediate orbit points $T_{a_m}\circ \cdots T_{a_1}(0)$ for $m<n$ also lie in $\bar{X}_R(\beta)$.  Thus, for $x\in \bar{X}_R(\beta)$ we can compute $\bar{\mathcal N}_n(x)$ just by studying the dynamics of the maps $T_i$ restricted to $\bar{X}_R(\beta)$.

Since $\bar{X}_R(\beta)$ is a bounded subset of a lattice, it is finite, we enumerate its elements $\{x_1,\cdots x_{k_R}\}$ with $x_1=0$. Then we write down the matrix
	\begin{align*}
\Lambda_R(i,j)=
\begin{cases}
1\quad \text{if }\quad T_1(x_i)=x_j \text{ or} T_{-1}(x_i)=x_j \\
2\quad \text{if }\quad T_0(x_i)=x_j\\
0\quad \text{otherwise}
\end{cases}.
\end{align*}
which encodes the dynamics on $\bar{X}_R(\beta)$ given by the maps $T_i$. Then since $\bar{\mathcal N}_n(x_j)$ counts the number of length $n$ orbit pieces from $0$ to $x_j$ under the maps $T_0, T_1, T_{-1}$, double counting for each use of $T_0$, we see that
\[
\bar{\mathcal N}_n(x_j)=(\Lambda_R^n)_{1,j}.
\]



From $T_i(\bar{X}(\beta)\backslash \bar{X}_1(\beta))\subset \bar{X}(\beta)\backslash \bar{X}_1(\beta)$  we get that the irreducible component of $\Lambda_R$ that contains the zero point is contained in $\bar{X}_1(\beta)$ so by lemma \ref{maximality of center} we have that the spectral radius of $\Lambda_R$ is equal to the spectral radius of $\Lambda_1$ for all $R>1$.

\begin{definition}

We set $\lambda:=\rho(\Lambda_1)$.

\end{definition}

Now if we knew that the matrices $\Lambda_R$ were irreducible, the existence of $\mu$ would be immediate. As it is we require the following lemma, the proof of which is postponed to the appendix.

\begin{lemma}\label{Non-irreducible Perron}
		Let $A$ be a non-negative $N\times N$ matrix and $e_1=(1,0,0,...,0)\in\mathbb{R}^N$. Assume that 
		
		\begin{itemize}

		 \item[i)] $A(1,1)>0$,
		 \item[ii)] there exists $n\in\mathbb{N}$ such that $e_1A^n$ is stricly positive,		
		
		 \item[iii)]$e_1A^n(i)\leq e_1A^n(1)$ for all $n\in\mathbb{N}$ and $i\in\{1,...,N\}$,
		 
		\end{itemize}

	then $\lim_{n\rightarrow\infty}e_1A^n/\rho(A)^n$ exists. 
	
		\end{lemma}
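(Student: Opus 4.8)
The plan is to reduce to the classical Perron–Frobenius theorem by isolating the irreducible component that governs the growth of $e_1A^n$. First I would pass to the block-triangular (Frobenius normal) form of $A$ relative to its communicating classes: order the classes so that the index set $\{1,\dots,N\}$ decomposes into blocks, with the block containing coordinate $1$ appearing first. Condition (i), $A(1,1)>0$, guarantees that $\{1\}$ communicates with itself, so coordinate $1$ genuinely lies in some irreducible class $C_0$; call its associated submatrix $B$ and let $\rho_0=\rho(B)$. The key structural observation is that, because the orbit under $A$ starting from $e_1$ can only move ``downstream'' in this ordering, every coordinate reachable from $1$ lies in $C_0$ or in a later block; condition (ii) (that $e_1A^n$ is eventually strictly positive) then forces \emph{every} coordinate to be reachable from $1$, hence every class is reachable from $C_0$. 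I would then argue that $\rho(A)=\rho_0$: no downstream block can have spectral radius exceeding $\rho_0$, for if some block $B'$ reachable from $C_0$ had $\rho(B')\ge\rho_0$, the corresponding entries of $e_1A^n$ — which pick up contributions routed through $C_0$ and then into $B'$ — would grow at rate at least $n\rho_0^{n-1}$ or faster (a polynomial-times-exponential blow-up from the chain of Jordan-type couplings), violating condition (iii), which says $e_1A^n(i)\le e_1A^n(1)$ and $e_1A^n(1)=(B^n)_{1,\cdot}$-type growth is exactly $\Theta(\rho_0^n)$ by Perron–Frobenius applied to the primitive or period-$p$ irreducible block $B$.

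Next I would handle periodicity of $B$. If $B$ is primitive, Perron–Frobenius gives $B^n/\rho_0^n\to v u^T$ with $u,v>0$, and restricted to $C_0$ the limit $e_1A^n/\rho_0^n$ converges. If $B$ has period $p>1$, then a priori $(B^n)_{1,\cdot}/\rho_0^n$ only converges along residue classes of $n\bmod p$; here condition (iii) does real work, because it pins down which residue class achieves the maximum and — combined with (ii), which says strict positivity holds for all large $n$, not just along a subsequence — I would show the period must in fact be $1$ on the relevant component, or else the inequality $e_1A^n(i)\le e_1A^n(1)$ fails for $i=1$ shifted by one step. (Concretely: if $1$ had period $p>1$, then $e_1A^n(1)=0$ unless $p\mid n$, contradicting eventual strict positivity of $e_1A^n$ — wait, $e_1A^n(1)$ need not be a diagonal-only term, but the cyclic structure still forces $e_1A^n$ restricted to $C_0$ to vanish on some coordinates for $n$ outside a fixed residue class, again contradicting (ii).) So (ii) alone already forces $B$ primitive once we know coordinate $1$ and its class are as above.

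With $\rho(A)=\rho_0$ and $B$ primitive established, the last step is to extend convergence from the component $C_0$ to all downstream coordinates. For a coordinate $i$ in a later block, $e_1A^n(i)$ is a finite sum over paths that leave $C_0$ at some time $m$ and travel through a fixed acyclic arrangement of blocks of strictly smaller spectral radius (smaller, again by condition (iii) as argued above — none can equal $\rho_0$, else $e_1A^n(i)$ would outgrow $e_1A^n(1)$). Each such contribution is, up to bounded combinatorial factors, $(\rho_0^m)\cdot(\text{geometric decay in }n-m)$, and summing the geometric series in $m$ gives $e_1A^n(i)=c_i\rho_0^n+o(\rho_0^n)$ for an explicit constant $c_i\ge 0$; dividing by $\rho_0^n$ yields the claimed limit. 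I expect the main obstacle to be the bookkeeping in this last step — making the ``paths through downstream blocks contribute a convergent geometric tail'' argument precise when some downstream blocks are themselves only irreducible (hence periodic) rather than primitive, and carefully using condition (iii) to rule out equality of spectral radii along the reachable chain. Everything else is a fairly standard unwinding of the Frobenius normal form.
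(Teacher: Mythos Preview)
Your proposal is essentially correct and follows the same route as the paper: pass to the Frobenius normal form, use (i) and (iii) to pin down that the block $B_1$ containing coordinate $1$ is primitive with $\rho(B_1)=\rho(A)$, then propagate convergence inductively to the downstream blocks, ruling out any downstream block of equal spectral radius via the growth bound (iii). Two small remarks: first, your aperiodicity argument is unnecessarily roundabout---the paper simply notes that $A(1,1)>0$ gives a self-loop at $1$, so the period of $B_1$ is $1$; second, the ``geometric tail'' step you flag as the main obstacle is exactly where the paper does the real work, writing $p_i(e_1A^{n+1})=p_i(e_1A^n)B_i'+p_i(p_{i-}(e_1A^n)A)$ and comparing to the affine contraction $T_i(x)=xB_i'+p_i(v'A)$ with fixed point $u'=p_i(v'A)(I-B_i')^{-1}$, which makes your sketch precise.
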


Now by the construction of $\Lambda_R$ and by Lemma \ref{maximality of center} and Lemma \ref{Non-irreducible Perron} we have the following proposition.

\begin{proposition}\label{measure exists}

For each $x\in\bar{X}(\beta)$ 
\begin{align*}
\bar{\mu}(x):=\lim_{n\rightarrow\infty}\frac{\bar{\mathcal N}_n(x)}{\lambda^n}
\end{align*}
exists, defining a measure $\bar{\mu}\in\mathcal{M}(\bar{Z})$. 

\end{proposition}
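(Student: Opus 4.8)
The plan is to reduce the statement to an application of Lemma \ref{Non-irreducible Perron} applied to a matrix $\Lambda_R$ for each sufficiently large $R$, and then to patch these local limits together into a single measure on $\bar Z$. First I would fix $R>1$ and work with the finite matrix $\Lambda_R$ acting on the finite set $\bar X_R(\beta)=\{x_1,\dots,x_{k_R}\}$ with $x_1=0$. We already know from the discussion preceding the proposition that $\bar{\mathcal N}_n(x_j)=(\Lambda_R^n)_{1,j}=(e_1\Lambda_R^n)_j$, that the irreducible component of $\Lambda_R$ containing $0$ lies in $\bar X_1(\beta)$, and hence that $\rho(\Lambda_R)=\rho(\Lambda_1)=\lambda$ for all $R>1$. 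So it suffices to check the three hypotheses of Lemma \ref{Non-irreducible Perron} for $A=\Lambda_R$ and $e_1$ the indicator of $x_1=0$.

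Hypothesis (i), $\Lambda_R(1,1)>0$, holds because $T_0(0)=0$, so the diagonal entry at the origin equals $2$. Hypothesis (iii), $e_1\Lambda_R^n(j)\le e_1\Lambda_R^n(1)$ for all $n$ and $j$, is exactly Lemma \ref{maximality of center} (recall $e_1\Lambda_R^n(j)=\bar{\mathcal N}_n(x_j)=\bar\mu_n(x_j)4^n$, and Lemma \ref{maximality of center} gives $\bar\mu_n(x_j)\le\bar\mu_n(0)$). Hypothesis (ii) — that some power $e_1\Lambda_R^n$ is strictly positive, i.e. every $x_j\in\bar X_R(\beta)$ is reachable from $0$ by a length-$n$ orbit of the $T_i$ for one common $n$ — needs a short argument: each $x_j\in\bar X_R(\beta)\subseteq\bar X(\beta)$ is by definition of the form $T_{a_n}\circ\cdots\circ T_{a_1}(0)$ for some word, and all intermediate points stay in $\bar X_R(\beta)$ since $R>1$; prepending zeros to a word does not change the endpoint (as $T_0(0)=0$), so by padding we may take all these words to have a common length $n$, giving a strictly positive $e_1\Lambda_R^n$. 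Lemma \ref{Non-irreducible Perron} then yields that $\lim_{n\to\infty}e_1\Lambda_R^n/\lambda^n$ exists, i.e. $\bar\mu(x):=\lim_n\bar{\mathcal N}_n(x)/\lambda^n$ exists for every $x\in\bar X_R(\beta)$, and is nonnegative.

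Finally I would let $R\to\infty$: since $\bar X(\beta)=\bigcup_{R>1}\bar X_R(\beta)$ and, for $x$ fixed, the value $\bar{\mathcal N}_n(x)$ computed inside $\bar X_R(\beta)$ agrees with the global one once $R$ is large enough that $x\in\bar X_R(\beta)$ (again because orbits reaching $x$ never leave $\bar X_R(\beta)$), the limits are consistent and define a single function $\bar\mu$ on $\bar X(\beta)$, extended by $0$ on $\bar Z\setminus\bar X(\beta)$ since $\bar\mu_n$ already vanishes there. Each $\bar X_R(\beta)$ is finite, so $\bar\mu$ is finite on bounded sets, hence $\bar\mu\in\mathcal M(\bar Z)$ as a locally finite (possibly infinite total mass) measure; countable additivity on $\bar Z$ is automatic for a pointwise-defined nonnegative weighting on a countable set. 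I expect the only genuine subtlety to be the bookkeeping in hypothesis (ii) — confirming that a single exponent $n$ works simultaneously for all points of $\bar X_R(\beta)$ via zero-padding — together with checking that the entries of $e_1\Lambda_R^n$ really are the positive integers $\bar{\mathcal N}_n$ with no cancellation; everything else is a direct appeal to the two preceding lemmas.
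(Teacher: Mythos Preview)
Your proposal is correct and follows exactly the approach the paper takes: apply Lemma \ref{Non-irreducible Perron} to $\Lambda_R$, using $T_0(0)=0$ for hypothesis (i), the definition of $\bar X(\beta)$ together with zero-padding for hypothesis (ii), and Lemma \ref{maximality of center} for hypothesis (iii). The paper compresses all of this into a single sentence (``by the construction of $\Lambda_R$ and by Lemma \ref{maximality of center} and Lemma \ref{Non-irreducible Perron}''), so your write-up simply makes explicit the details the paper leaves to the reader, including the consistency of the limits as $R$ grows.
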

We conclude this section with three lemmas showing that the measure $\mu$ is invariant under $L$, that $\lambda<4$, and that the total mass of the measure $\mu$ is infinite.

\begin{lemma}\label{mu stationary}
	$L\bar{\mu}=\gl\bar{\mu}$
\end{lemma}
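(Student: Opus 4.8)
The plan is to show $L\bar\mu = \lambda\bar\mu$ by passing to the limit in the exact recursion $\bar\mu_{n+1} = L\bar\mu_n$, which was already established for finite $n$. Concretely, fix $x \in \bar Z$. Since $L(\nu)(x) = \sum_{(a,b)\in\{0,1\}^2}\nu(T_{a-b}^{-1}(x))$ involves only the four points $T_{a-b}^{-1}(x)$ for $(a,b)\in\{0,1\}^2$ (equivalently the points $T_i^{-1}(x)$ for $i\in\{-1,0,1\}$, with $i=0$ counted twice), each of which lies in $\bar Z$ (or the term vanishes if the preimage is not in $\bar X(\beta)$), I would write
\[
(L\bar\mu)(x) = \sum_{(a,b)\in\{0,1\}^2}\bar\mu\bigl(T_{a-b}^{-1}(x)\bigr) = \sum_{(a,b)\in\{0,1\}^2}\lim_{n\to\infty}\frac{\bar{\mathcal N}_n(T_{a-b}^{-1}(x))}{\lambda^n}.
\]
Because this is a \emph{finite} sum of four terms, each convergent by Proposition \ref{measure exists}, I can interchange the sum and the limit with no analytic subtlety, obtaining
\[
(L\bar\mu)(x) = \lim_{n\to\infty}\frac{1}{\lambda^n}\sum_{(a,b)\in\{0,1\}^2}\bar{\mathcal N}_n\bigl(T_{a-b}^{-1}(x)\bigr) = \lim_{n\to\infty}\frac{\bar{\mathcal N}_{n+1}(x)}{\lambda^n},
\]
where the last equality is precisely the recursion $\bar\mu_{n+1}=L\bar\mu_n$ (written in terms of $\bar{\mathcal N}$ rather than $\bar\mu_n = \bar{\mathcal N}_n/4^n$, so the $4^n$ normalisation is irrelevant here).

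The final step is to recognise that $\lim_{n\to\infty}\bar{\mathcal N}_{n+1}(x)/\lambda^n = \lambda\lim_{n\to\infty}\bar{\mathcal N}_{n+1}(x)/\lambda^{n+1} = \lambda\,\bar\mu(x)$, again using Proposition \ref{measure exists} (a convergent sequence and its shift have the same limit). This gives $(L\bar\mu)(x) = \lambda\bar\mu(x)$ for every $x\in\bar Z$, i.e. $L\bar\mu = \lambda\bar\mu$ as elements of $\mathcal M(\bar Z)$. One should note that $L$ is defined a priori on all of $\mathcal M(\bar Z)$ and the identity is pointwise, so no question of $\|L\bar\mu\|$ being finite arises at this stage; the measure is genuinely infinite, as the subsequent lemma records.

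I do not expect any real obstacle here: the only thing to be careful about is that the interchange of limit and summation is legitimate, and this is immediate because the sum ranges over a fixed finite index set $\{0,1\}^2$ independent of $n$. The one bookkeeping point worth stating explicitly is that $L$ as written sums over $(a,b)\in\{0,1\}^2$, so the map $T_0^{-1}$ (corresponding to $a=b$) is counted with multiplicity two while $T_1^{-1}$ and $T_{-1}^{-1}$ are counted once each — exactly mirroring the multiplicities built into $\bar{\mathcal N}_n$ and into the matrices $\Lambda_R$ — and this is what makes the recursion $\bar\mu_{n+1}=L\bar\mu_n$ hold on the nose.
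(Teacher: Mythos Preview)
Your proposal is correct and follows essentially the same argument as the paper: expand $L\bar\mu(x)$ as the finite sum over $(a,b)\in\{0,1\}^2$, plug in the defining limit from Proposition~\ref{measure exists}, swap the finite sum with the limit, use the recursion $\bar{\mathcal N}_{n+1}(x)=\sum_{(a,b)}\bar{\mathcal N}_n(T_{a-b}^{-1}(x))$, and shift the index to extract the factor $\lambda$. If anything, your write-up is slightly more careful than the paper's about distinguishing $\bar{\mathcal N}_n$ from $\bar\mu_n$ and about why the sum--limit interchange is trivial.
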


\begin{proof}
	
	For all $x\in\bar{X}(\beta)$ we have 
	\begin{align*}	
L\bar{\mu}(x)&=\bar{\mu}(T_{-1}^{-1}(x))+2\bar{\mu}(T_{0}^{-1}(x))+\bar{\mu}(T_{1}^{-1}(x))\\
&=\lim_{n\rightarrow\infty}\frac{1}{\gl^n}\left(\bar{\mu}_n(T_{-1}^{-1}(x))+2\bar{\mu}_n( T_{0}^{-1}(x))+\bar{\mu}_n(T_{1}^{-1}(x))\right)\\
&=\lim_{n\rightarrow\infty}\frac{1}{\gl^n}L\bar{\mu}_n(x)\\
&=\gl\lim_{n\rightarrow\infty}\frac{1}{\gl^{n+1}}\bar{\mu}_{n+1}(x)\\
&=\gl\bar{\mu}(x).
\end{align*}
\end{proof}

For sets $X$, measures $\nu\in\mathcal M(X)$ and measurable sets $A\subset X$ we let $\nu|_A$ be such that $\nu|_A(B)=\nu(A\cap B)$ for all measurable $B\subset X$. 

\begin{lemma}
	$\gl<4$
\end{lemma}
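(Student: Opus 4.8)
The plan is to show $\lambda < 4$ by exhibiting a point $y \in \bar X(\beta)$ with $y \neq 0$ and $\bar\mu(y) > 0$, and then using the $L$-stationarity of $\bar\mu$ together with the ``maximality of the centre'' from Lemma \ref{maximality of center} to squeeze the eigenvalue strictly below $4$. First I would recall that for every $x \in \bar X(\beta)$ we have $\bar\mu_n(x) \le \bar\mu_n(0)$, hence $\bar\mu(x) \le \bar\mu(0)$, and that $\bar\mu(0) \in (0,\infty)$ by Proposition \ref{measure exists}. The operator identity $L\bar\mu = \lambda\bar\mu$ evaluated at $x = 0$ reads
\[
\lambda \bar\mu(0) \;=\; \bar\mu(T_{-1}^{-1}(0)) + 2\,\bar\mu(T_0^{-1}(0)) + \bar\mu(T_1^{-1}(0)).
\]
Now $T_0^{-1}(0) = 0$, so the middle term contributes exactly $2\bar\mu(0)$. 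The two outer preimages $T_{\pm 1}^{-1}(0)$ are the points $\mp\bar\beta^{-1}\cdot\text{(stuff)}$; more precisely $T_1^{-1}(0)$ is the point $u$ with $T_1(u) = 0$, i.e. $\bar\beta u + \bar 1 = 0$ coordinatewise, and similarly for $T_{-1}$. Each of these is an element of $\bar X(\beta)$ (it equals $T_0\circ\cdots$ applied appropriately — more carefully, $-\bar\beta^{-1}$ type points need not literally lie in $\bar X(\beta)$, so the cleaner route is to note $T_{\pm1}^{-1}(0) \in \bar Z$ and use that $\bar\mu \in \mathcal M(\bar Z)$ with $\bar\mu$ supported on $\bar X(\beta)$). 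By Lemma \ref{maximality of center}, each of $\bar\mu(T_{-1}^{-1}(0))$ and $\bar\mu(T_1^{-1}(0))$ is at most $\bar\mu(0)$, giving $\lambda\bar\mu(0) \le \bar\mu(0) + 2\bar\mu(0) + \bar\mu(0) = 4\bar\mu(0)$, hence $\lambda \le 4$. To get the \emph{strict} inequality I must rule out equality, which forces $\bar\mu(T_{\pm1}^{-1}(0)) = \bar\mu(0)$, i.e. equality in the Cauchy--Schwarz bound of Lemma \ref{maximality of center} applied to $y = T_1^{-1}(0)$.

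The key step is therefore to analyse the equality case in Cauchy--Schwarz: $\bar\mu_n(y) = \bar\mu_n(0)$ for all $n$ would require $\mu'_n(\cdot)$ and $\mu'_n(\cdot + y)$ to be proportional for every $n$, where $\mu'_n(x) = \#\{a \in \{0,1\}^n : \sum a_i\bar\beta^{n-i} = x\}$. Since $\sum_x \mu'_n(x) = 2^n$ for both, proportionality means $\mu'_n(x) = \mu'_n(x+y)$ for all $x$, i.e. the finite multiset $\{\sum a_i \bar\beta^{n-i}\}$ is invariant under translation by $y \neq 0$. This is impossible because that multiset is finite and nonempty: pick the lexicographically extreme element (or the coordinate-wise extreme element in an expanding direction, say coordinate $1$ where $\beta_1 > 1$), namely $\sum_{i=1}^n \bar\beta^{n-i}$, which is the unique maximiser in the first coordinate; translating by $y$ moves its first coordinate by $y_1 \neq 0$, contradiction (here I use injectivity of $\pi_e$, equivalently that $y \neq 0 \Rightarrow y_1 \neq 0$ on $\bar X(\beta)$, which follows from $x^2 - x - 1$, or in general the minimal polynomial of $\beta$, being the minimal polynomial — cf.\ the remarks on injectivity of $\pi_e$ in Section \ref{GM}). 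Hence $\bar\mu_n(y) < \bar\mu_n(0)$ for all sufficiently large $n$, and in fact the gap $\bar\mu_n(0) - \bar\mu_n(y)$ grows at rate $\lambda^n$ (it cannot, since $\bar\mu(y)$ exists; rather, the cleanest argument is: $\bar\mu(y) \le \bar\mu(0)$, and we need strict inequality, which I get by a more quantitative Cauchy--Schwarz defect estimate).

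Here is the sharper version I would actually write, to avoid subtleties about whether the Cauchy--Schwarz defect survives the limit. Since $\lambda = \rho(\Lambda_1)$ and $\Lambda_1$ restricted to its irreducible component containing $0$ is primitive (this is implicitly used in Proposition \ref{measure exists}, via hypothesis (ii) of Lemma \ref{Non-irreducible Perron}), the left Perron eigenvector restricted to that component is strictly positive; call it $W$ and normalise $W_1 = 1$. By Lemma \ref{LALemma}, $\lambda = \sum_i W_i r_i$ where $r_i$ is the $i$-th row sum of $\Lambda_1$. Each row sum $r_i$ is $1 + 2 + 1 = 4$ only if all three of $T_{-1}(x_i), T_0(x_i), T_1(x_i)$ lie in $\bar X_1(\beta)$; but for $x_i$ a point whose first coordinate is near the boundary $\pm\frac{1}{\beta-1}$ of $I_{\beta_1}(1)$, at least one of $T_{\pm1}(x_i)$ is pushed outside $B_\beta(1)$ (in the first coordinate, since $T_i$ expands there by factor $\beta_1 > 1$ and adds $\pm 1$). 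Concretely the point $x_i$ with first coordinate equal to $\frac{1}{\beta-1} - \epsilon$ for small $\epsilon$, if such a lattice point exists in $\bar X_1(\beta)$ — and one does, e.g.\ the truncation $\sum_{i=1}^m \bar\beta^{m-i}$ for suitable $m$ — has $T_1(x_i)$ with first coordinate $> \frac{1}{\beta-1}$, hence $r_i \le 3$. Since $W$ is strictly positive on the component and at least one such $i$ has $r_i < 4$, we get $\lambda = \sum_i W_i r_i < 4 \sum_i W_i = 4$.

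The main obstacle is making the ``at least one row sum is strictly less than $4$'' claim rigorous: one must confirm that $\bar X_1(\beta)$ actually contains a point sitting close enough to the boundary of $B_\beta(1)$ that one of its images under $T_{\pm1}$ escapes. For $\beta \in (1,2)$ this is elementary in the first (expanding) coordinate: the set $\{\sum_{i=1}^m \bar\beta^{m-i}\}_m$ has first coordinates approaching $\frac{1}{\beta - 1}$ from below (geometric series), and $T_1$ of the closest one has first coordinate $\beta \cdot (\text{something} < \tfrac{1}{\beta-1}) + 1$, which exceeds $\tfrac{1}{\beta-1}$ once the something is within $\tfrac{1}{\beta}$ of $\tfrac{1}{\beta-1}$, i.e.\ eventually — but one must also check this point, or a suitable variant, actually lies in $\bar X_1(\beta)$ (all its coordinates, including the contracting ones, within $B_\beta(1)$), which holds since such finite sums automatically lie in the attractor-type region $B_\beta(1)$ by the standard geometric-series bound $\sum_{j\ge 0}|\beta_i|^{-j} \cdot (\cdots)$. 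I would spell out this one point carefully and then the inequality $\lambda < 4$ follows from Lemma \ref{LALemma} and positivity of the Perron eigenvector as above.
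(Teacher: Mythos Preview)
Your first approach---evaluating $L\bar\mu=\lambda\bar\mu$ at $x=0$ and invoking Cauchy--Schwarz---has a real gap that you yourself flag: strict inequality $\bar\mu_n(y)<\bar\mu_n(0)$ at every finite stage does not survive division by $\lambda^n$ and passage to the limit without a quantitative defect bound, and you never supply one. So as written this line does not close.

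Your second approach (via Lemma~\ref{LALemma}: $\lambda=\sum_i W_i r_i$ with each row sum $r_i\le 4$ and at least one $r_i<4$) is correct in substance and is really the same idea as the paper's proof, just packaged differently. The paper's formulation is cleaner and sidesteps the issues you worry about. Rather than invoking irreducibility of $\Lambda_1$ and locating a specific boundary point with $r_i<4$ inside the Perron component, the paper works with the already-constructed measure $\bar\mu|_{\bar X_1(\beta)}$ (finite, strictly positive on $\bar X_1(\beta)$) and the blunt identity $\|L\nu\|=4\|\nu\|$ for any finite $\nu$. Then
\[
4\|\bar\mu|_{\bar X_1(\beta)}\|=\|L(\bar\mu|_{\bar X_1(\beta)})\|=\lambda\|\bar\mu|_{\bar X_1(\beta)}\|+\bigl\|\,L(\bar\mu|_{\bar X_1(\beta)})|_{\bar Z\setminus\bar X_1(\beta)}\bigr\|,
\]
using $(L(\bar\mu|_{\bar X_1(\beta)}))|_{\bar X_1(\beta)}=\lambda\bar\mu|_{\bar X_1(\beta)}$ (which follows from $T_i(\bar X(\beta)\setminus\bar X_1(\beta))\subset\bar X(\beta)\setminus\bar X_1(\beta)$). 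The escaped mass is strictly positive simply because $\bar X_1(\beta)$ is not forward-invariant under the $T_i$ (iterating $T_1$ on $0$ eventually leaves $B_\beta(1)$). This is exactly your ``at least one row sum is $<4$ and has positive eigenvector weight'', but without needing to pin down which point or to argue about the irreducible component.
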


\begin{proof}
It is clear that if $\nu\in\mathcal{M}(\bar{Z})$ is such that 

\begin{equation*}
||\nu||<\infty
\end{equation*}

then 

\begin{equation*}
||L\nu||=4||\nu||.
\end{equation*}

 
 
Note that $L(\bar{\mu})=\lambda\bar{\mu}$ and \[
L\left(\bar{\mu}|_{\bar{X}_1(\beta)}\right)|_{ {\bar{X}_1(\beta)}}=\lambda\bar{\mu}|_{X_1(\beta)},
\]
but
\[
\left|\left|\left(L\left(\bar{\mu}|_{\bar{X}_1(\beta)}\right)\right)|_{ \bar{Z}\backslash {\bar{X}_1(\beta)}}\right|\right|>0
\]
since $\bar{X}_1(\beta)$ is not invariant under the maps $T_0, T_1, T_{-1}$. Then
\begin{align*}
4||\bar{\mu}|_{\bar{X}_1(\beta)}||&=\left|\left|L\left(\bar{\mu}|_{\bar{X}_1(\beta)}\right)\right|\right|
=\left|\left|\left(L\left(\bar{\mu}|_{\bar{X}_1(\beta)}\right)\right)|_{ {\bar{X}_1(\beta)}}\right|\right|+\left|\left|\left(L\left(\bar{\mu}|_{\bar{X}_1(\beta)}\right)\right)|_{ \bar{Z}\backslash {\bar{X}_1(\beta)}}\right|\right|\\
&=\gl||\bar{\mu}|_{\bar{X}_1(\beta)}||+\left|\left|\left(L\left(\bar{\mu}|_{\bar{X}_1(\beta)}\right)\right)|_{ \bar{Z}\backslash {\bar{X}_1(\beta)}}\right|\right|>\gl||\bar{\mu}|_{\bar{X}_1(\beta)}||_1
\end{align*}
giving us $\gl<4$.
\end{proof}

\begin{proposition}
	$||\bar{\mu}||=\infty$, i.e., the measure $\bar{\mu}$ is infinite.
\end{proposition}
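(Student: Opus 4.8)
The plan is to show that $\bar\mu$ assigns the same value to infinitely many distinct lattice points, which forces $\|\bar\mu\| = \infty$. The natural candidates are translates of $0$ by powers of $\beta$, or more precisely points whose contracting coordinates are controlled while the expanding coordinate grows without bound.

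First I would exploit the structure of $\bar X(\beta)$ together with the fact that $\beta = \beta_1 > 1$. Pick any nonzero word $a_1\cdots a_m \in \{-1,0,1\}^m$ with $\sum_{i=1}^m a_i \beta^{m-i} = 0$; such a word exists precisely because $\beta$ is a root of a $\{-1,0,1\}$-polynomial (this is where the hypothesis that $\beta$ is hyperbolic, hence in particular a root of such a polynomial, is used). Because $\beta_1,\dots,\beta_{d+s}$ are Galois conjugates, this same word gives $\sum_{i=1}^m a_i \bar\beta^{m-i} = 0$ in $\mathbb K$, i.e. the word $a_m\cdots a_1$ codes a return of $0$ to $0$ under the $T_i$. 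Now consider, for each $N$, the point $x^{(N)} := T_{a_m}\circ\cdots\circ T_{a_1}\circ T_0^{N}(0)$. Wait — more cleanly: I would instead use the translation-type statement. By Lemma \ref{maximality of center} (after taking the limit), $\bar\mu(x) \le \bar\mu(0)$ for every $x$, so $\bar\mu(0) = \max_x \bar\mu(x) \in (0,\infty)$ by Proposition \ref{measure exists}.

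The key step is then to produce infinitely many $x$ with $\bar\mu(x)$ bounded below by a fixed positive constant. I would argue as follows. By Lemma \ref{mu stationary}, $L\bar\mu = \lambda\bar\mu$ with $\lambda < 4$. Suppose for contradiction that $\|\bar\mu\| = \sum_{x\in\bar Z}\bar\mu(x) < \infty$. Then $\|L\bar\mu\| = 4\|\bar\mu\|$ (this identity, valid for finite-mass measures, is exactly the one used in the preceding lemma). On the other hand $\|L\bar\mu\| = \|\lambda\bar\mu\| = \lambda\|\bar\mu\|$. Since $\bar\mu$ is a nonzero nonnegative measure, $\|\bar\mu\| > 0$, so we may divide to get $\lambda = 4$, contradicting the previous lemma $\lambda < 4$. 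Hence $\|\bar\mu\| = \infty$.

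I expect the only genuine subtlety — and the step to state carefully — is the justification of $\|L\bar\mu\| = 4\|\bar\mu\|$ under the \emph{a priori} assumption that $\|\bar\mu\| < \infty$: one must check that no mass is lost under $L$, i.e. that $\sum_x (L\bar\mu)(x) = \sum_x \sum_{(a,b)\in\{0,1\}^2}\bar\mu(T_{a-b}^{-1}(x)) = 4\sum_x \bar\mu(x)$, which is just Tonelli (everything nonnegative) together with the fact that each $T_{a-b}$ is a bijection of $\bar Z$ onto its image and these images are re-indexed by $x$; no infinite-mass pathology arises because we are assuming finiteness for contradiction. Everything else is a one-line combination of Lemma \ref{mu stationary} and the bound $\lambda < 4$ already in hand, so this proposition is essentially a corollary of the two lemmas immediately preceding it.
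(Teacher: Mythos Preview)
Your final argument is correct and is essentially the same as the paper's: both combine $L\bar\mu=\lambda\bar\mu$, the bound $\lambda<4$, and the identity $\|L\nu\|=4\|\nu\|$ for finite-mass nonnegative $\nu$. The only cosmetic difference is that the paper packages this as a direct lower bound $\|\bar\mu\|=\|\lambda^{-n}L^n\bar\mu\|\ge \lambda^{-n}\|L^n(\bar\mu|_{\{0\}})\|=(4/\lambda)^n\bar\mu(0)\to\infty$, whereas you run it as a contradiction; the content is identical. (Your opening paragraph about producing infinitely many points of equal mass is an abandoned false start and should simply be deleted.)
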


\begin{proof}
	 For $n\in\mathbb{N}$ we get
	\begin{align*}
	||\bar{\mu}||&=\left|\left|\frac{1}{\gl^n}L^n\bar{\mu}\right|\right|>\left|\left|\frac{1}{\gl^n}L^n\left(\bar{\mu}|_{\{0\}}\right)\right|\right|
	=\frac{4^n}{\gl^n}\bar{\mu}(0).
	\end{align*}
	The result follows since $\gl<4$, $\bar{\mu}(0)>0$ and $n$ was arbitrary. 
	
		\end{proof}

\subsection{Transition Matrices}\label{sub3}
	Let $ \Delta=\{v_1,\cdots,v_k\}$ with $v_1=0$. We introduce a $k\times k$ matrix with rows/columns corresponding to the points in $\Delta$.
	\begin{definition}\label{Ai}
		For $i\in\{-1,0,1\}$ let $A_i$ be the $k\times k$ matrix such that 
		\begin{align*}
		(A_{i})_{m,n}=
		\begin{cases}
		1\quad \text{if }\quad\exists j\in\{-1,1\}:T_{j-i}(v_m)=v_n \\
		2\quad \text{if }\quad T_{-i}(v_m)=v_n\\
		0\quad \text{otherwise}
		\end{cases}.
		\end{align*}
	\end{definition}
The matrices $A_i$ describe the evolution of local measure as we move from $x$ to $T_i(x)$, as described in Lemma \ref{AiTransitions}. Recall that $v_1=0, v_2, \cdots v_k$ are the elements of $\Delta$ (Definition \ref{DeltaDef}. We define a vector which describes the local measure around $x$.
\begin{definition}
We let $v(x)=(\mu(x), \mu(x+v_2), \cdots ,\mu(x+v_k))$.
\end{definition}

\begin{lemma}\label{AiTransitions}
Let $x\in \bar{X}(\beta)$. Then
\[
\frac{1}{\lambda}v(x)A_i=v(T_i(x)).
\]
\end{lemma}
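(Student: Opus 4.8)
The plan is to reduce the claimed identity $\frac{1}{\lambda}v(x)A_i = v(T_i(x))$ to a bookkeeping statement about counting codes, exactly parallel to the golden mean arguments in Lemma \ref{MatrixCountingLemma} and Theorem \ref{GoldenThm2}, but now phrased directly in terms of the translation vectors $\Delta = \{v_1,\dots,v_k\}$ rather than the intermediate window $V$. The key observation is that, by Proposition \ref{measure exists} together with Lemma \ref{mu stationary}, $\bar\mu$ is a $\lambda$-eigenmeasure of $L$; so for any fixed $m\in\{1,\dots,k\}$ one has the local recursion $\lambda\,\mu(y+v_m) = \sum_{(a,b)\in\{0,1\}^2}\mu(T_{a-b}^{-1}(y+v_m))$. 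First I would fix $x\in\bar X(\beta)$ and $i\in\{-1,0,1\}$, set $y=T_i(x)$, and expand the $n$-th coordinate of $v(T_i(x))$, namely $\mu(T_i(x)+v_n)$, using this recursion applied at the point $T_i(x)+v_n$.

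Next I would match terms. For a digit pair $(a,b)\in\{0,1\}^2$ write $j := a-b \in\{-1,0,1\}$, so the digit $0$ occurs with multiplicity $2$ and $\pm 1$ with multiplicity $1$ — this is the source of the $1$'s and $2$'s in Definition \ref{Ai}. We have $T_j^{-1}(T_i(x)+v_n) = x + T_j^{-1}(v_n) \cdot(\text{affine bookkeeping})$; more precisely, since $T_j$ is affine with the same linear part for every $j$, $T_j^{-1}(T_i(x)+v_n) - x$ depends only on $i,j,n$, and equals some $v_m\in\Delta$ precisely when that difference lies in $\Delta$ — which is exactly the condition $T_{j-i}(v_m)=v_n$ appearing in Definition \ref{Ai}. (Here one should check that the relevant differences do land in $\Delta$: this is where the closure properties of $\Delta$ under the maps $T_\bullet$, built into Definition \ref{DeltaDef}, are used, together with the fact that points outside $\Delta$ contribute $\mu$-mass but in a consistent way; I would argue that whenever $T_j^{-1}(T_i(x)+v_n)$ is in $\bar X(\beta)$ and differs from $x$ by an element of $\Delta$, the corresponding term is $\mu(x+v_m)$, and otherwise the difference, while possibly still giving a nonzero $\mu$ value, is accounted for consistently — in fact the cleanest route is to show both sides count the same weighted sum of codes, as in Lemma \ref{MatrixCountingLemma}, so that no term is lost). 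Summing over $(a,b)$ and collecting the multiplicities yields exactly $\sum_{m=1}^k (A_i)_{m,n}\,\mu(x+v_m) = (v(x)A_i)_n$, hence $\lambda\, v(T_i(x))_n = (v(x)A_i)_n$ for every $n$, which is the claim.

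The main obstacle I anticipate is the "no lost terms" issue: one must be sure that every point $T_{a-b}^{-1}(T_i(x)+v_n)$ carrying positive $\bar\mu$-mass is of the form $x+v_m$ for some $v_m\in\Delta$, i.e.\ that $\Delta$ really is closed under the relevant backward steps anchored at $x$. The safe way to handle this — and the route I would actually write up — is not to manipulate $\bar\mu$ directly but to repeat the finite-automaton counting argument of Lemma \ref{MatrixCountingLemma}: show that for $x=T_{a_n}\circ\cdots\circ T_{a_1}(0)$ the vector $(\bar{\mathcal N}_n(x+v_1),\dots,\bar{\mathcal N}_n(x+v_k))$ evolves under right-multiplication by $A_{a_{n+1}}$ when we append a digit, using that $\Delta$ was defined precisely as the set of differences that can be synchronised forward under the $T_\bullet$; then divide by $\lambda^{n+1}$ and pass to the limit using Proposition \ref{measure exists}. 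This sidesteps any delicate claim about which translates of $x$ have positive mass, since the automaton is defined combinatorially and the measure statement falls out by taking limits.
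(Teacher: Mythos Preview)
Your proposal is correct, and the ``safe route'' you settle on --- proving the recursion for the vectors $(\bar{\mathcal N}_n(x+v_m))_{m=1}^k$ via the affine identity $T_j(x+v_m)=T_i(x)+T_{j-i}(v_m)$ and then dividing by $\lambda^{n+1}$ and passing to the limit --- is exactly what the paper does. Your diagnosis of the ``no lost terms'' issue is apt; the paper leaves implicit the fact (which you correctly locate in Definition~\ref{DeltaDef}) that any preimage $T_j^{-1}(T_i(x)+v_l)$ lying in $\bar X(\beta)$ must differ from $x$ by an element of $\Delta$.
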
	
\begin{proof}
We show that
\[
(\bar{\mathcal N}_n(x), \bar{\mathcal N}_n(x+v_2), \cdots, \bar{\mathcal N}_n(x+v_k))A_i=(\bar{\mathcal N}_{n+1}(T_i(x)), \bar{\mathcal N}_{n+1}(T_i(x)+v_2), \cdots, \bar{\mathcal N}_{n+1}(T_i(x)+v_k)),
\]
the result will follow from this statement. 

Note that 
\begin{equation}\label{DeltaEvolution}
\bar{\mathcal N}_{n+1}(T_i(x)+v_l)=\bar{\mathcal N}_n(T_{1}^{-1}(T_i(x)+v_l))+\bar{\mathcal N}_n(T_{-1}^{-1}(T_i(x)+v_l))+2\bar{\mathcal N}_n(T_{0}^{-1}(T_i(x)+v_l))
\end{equation}
where of course $\bar{\mathcal N}_n(y)=0$ for $y\not\in\bar{X}(\beta)$. 

Secondly we note that 
\begin{align*}T_j(x+v_m)&=T_j(x)+T_0(v_m)\\
&= T_i(x)+T_0(v_m)+j-i\\
&= T_i(x)+T_{j-i}(v_m),
\end{align*}
which is equal to $T_i(x)+v_l$ if and only if $T_{j-i}(v_m)=v_l$.

So we can rewrite equation \ref{DeltaEvolution} to get
\begin{align*}
\bar{\mathcal N}_{n+1}(T_i(x)+v_l)&= \sum_{m\in\{1,\cdots,k\}}\bar{\mathcal N}_n(x+v_m)\chi_{T_{1-i}(v_m)=v_l}\\
&+\sum_{m\in\{1,\cdots,k\}}\bar{\mathcal N}_n(x+v_m)\chi_{T_{-1-i}(v_m)=v_l}\\
&+2\sum_{m\in\{1,\cdots,k\}}\bar{\mathcal N}_n(x+v_m)\chi_{T_{-i}(v_m)=v_l}.
\end{align*}
which is precisely the $l$th entry of $(\bar{\mathcal N}_n(x), \bar{\mathcal N}_n(x+v_2), \cdots, \bar{\mathcal N}_n(x+v_k))A_i$.






\end{proof}

\begin{prop}\label{prop2}
Set $W=v(0)=(\mu(0), \mu(v_2), \cdots, \mu(v_k))$. Let $x=\sum_{i=1}^n c_i\beta^{n-i}$. Then 
\[
v(x)=\frac{1}{\lambda^n}(WA_{c_1}\cdots A_{c_n}).
\]
In particular, 
\[
\bar{\mu}(x)=\frac{1}{\lambda^n}(WA_{c_1}\cdots A_{c_n})_1,
\]
i.e. the first entry of the $1\times k$ vector $\frac{1}{\lambda^n}WA_{c_1}\cdots A_{c_n}$. 
\end{prop}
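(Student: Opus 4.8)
The plan is to prove Proposition \ref{prop2} by induction on $n$, using Lemma \ref{AiTransitions} as the single-step engine. This is the direct analogue of how Theorem \ref{GoldenThm2} was obtained from Lemma \ref{MatrixCountingLemma} and equation \ref{CD3} in the golden mean section, so the argument should be short.

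First I would set up the base case. For $n=0$, the point is $x=0$ (the empty sum), and by definition $v(0)=W$, so the identity $v(x)=\frac{1}{\lambda^0}W$ holds trivially. For the inductive step, suppose $x=\sum_{i=1}^{n} c_i\beta^{n-i}\in\bar X(\beta)$ and that $v(x)=\frac{1}{\lambda^n}(WA_{c_1}\cdots A_{c_n})$. Now consider a point of the form $x'=\sum_{i=1}^{n+1} c_i\beta^{n+1-i}$; writing $y=\sum_{i=1}^{n} c_i\beta^{n+1-i-1}=\sum_{i=1}^n c_i\beta^{n-i}$ we have $x'=\beta y + c_{n+1}=T_{c_{n+1}}(y)$, where $y\in\bar X(\beta)$ has code $c_1\cdots c_n$ of length $n$. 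Applying Lemma \ref{AiTransitions} with $i=c_{n+1}$ gives $v(x')=v(T_{c_{n+1}}(y))=\frac{1}{\lambda}v(y)A_{c_{n+1}}$, and substituting the inductive hypothesis for $v(y)$ yields
\[
v(x')=\frac{1}{\lambda}\cdot\frac{1}{\lambda^{n}}(WA_{c_1}\cdots A_{c_n})A_{c_{n+1}}=\frac{1}{\lambda^{n+1}}(WA_{c_1}\cdots A_{c_{n+1}}),
\]
which is the claim for $n+1$. The statement about $\bar\mu(x)$ then follows by reading off the first coordinate of the vector identity, since $(v(x))_1=\mu(x)=\bar\mu(x)$ by the definition of $v(x)$ and the identity $\bar\mu(x)=\mu(x_1)$ noted earlier (here $x\in\bar X(\beta)$, so $\bar\mu(x)$ is its first entry $v(x)_1$).

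The only genuine subtlety, and the step I would be most careful about, is that the vector $v(x)$ and hence the product $WA_{c_1}\cdots A_{c_n}$ is well defined independently of which code $c_1\cdots c_n$ of $x$ one picks. This is not something the induction needs to re-prove: $v(x)$ is defined intrinsically as $(\mu(x),\mu(x+v_2),\dots,\mu(x+v_k))$, a function of the point $x$ alone, and Lemma \ref{AiTransitions} asserts the transition identity for \emph{every} $x$ and \emph{every} $i$ with $T_i(x)\in\bar X(\beta)$, so any code leads to the same vector. I would add a sentence making this explicit, perhaps also remarking that a code of length exactly $n$ is what is needed so that the matrix product has $n$ factors — this is precisely the role played by the freedom to prepend zeros (equivalently, to use $\bar\mu_n=L^n\bar\mu_0$ at the right level) in the golden mean proof, and it is harmless here because $\bar\mu$, not $\bar\mu_n$, appears in the conclusion. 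No new obstacle arises; the proof is essentially a two-line induction once Lemma \ref{AiTransitions} is in hand.
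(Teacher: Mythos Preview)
Your proposal is correct and mirrors the paper's own argument: the paper simply says the result follows immediately from Lemma \ref{AiTransitions} by writing $x=T_{c_n}\circ\cdots\circ T_{c_1}(0)$, which is exactly your induction spelled out. Your extra remarks on code-independence are a harmless elaboration but not needed for the proof itself.
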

\begin{proof}
This follows immediately from the previous lemma by writing
\[
x=T_{a_n}\circ T_{a_{n-1}}\circ\cdots \circ T_{a_1}(0).
\]
\end{proof}
Since the one dimensional measure $\mu$ is the projection of $\bar{\mu}$ onto the first coordinate, Theorem \ref{Thm2} follows as a direct corollary to Proposition \ref{prop2}.

	\subsection{Approximating local measures via the contractive subspace}\label{LMCS}

     Recall that $\mathcal{R}$ is the attractor of the IFS $\{S_{-1},S_{0},S_{1}\}$ and that $\pi_c(\bar{X}(\beta))\subseteq\mathcal{R}$. We will assume the following condition.

	\begin{condition}\label{condition 1}
	$\bar{X}(\beta)\cap cl(B_{\beta}(1))=\bar{Z}\cap\pi_c^{-1}(\mathcal{R}^{\mathrm{o}})\cap cl(B_{\beta}(1))$
	
	\end{condition}
	This is similar to a condition appearing in Corollary 4.5 of \cite{HareMasakovaVavra}. Here $\mathcal R^{\mathrm{o}}$ denotes the interior of the set. Condition \ref{condition 1} is a condition about two finite sets being equal, and so can be easily checked. In words, the condition says that a finite patch around zero of the set $\bar{X}(\beta)$, which is a higher dimensional analogue of the spectrum of $\beta$, can be written as a patch of a cut and project set with window $\mathcal R^{\mathrm{o}}$. Condition \ref{condition 1} implies that the whole set $\bar{X}(\beta)$ can be written as a cut and project set, this is the content of Corollary \ref{cutprojectcor}. In every example we have checked with $\beta\in(1,2)$ a hyperbolic algebraic unit and alphabet $\mathcal A=\{-1,0,1\}$, Condition \ref{condition 1} does indeed hold, but there are examples of Hare, Mas\'akov\'a and V\'avra \cite{HareMasakovaVavra} using complex alphabets in which the cut and project set contains extra points. 
	
	\begin{lemma}\label{closedXbar}
		For each $i\in\{-1,0,1\}$ we have $T_i^{-1}(\bar{Z})\subseteq \bar{Z}$.
	\end{lemma}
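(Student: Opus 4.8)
The plan is to reduce everything to a single algebraic fact: that the vector $\bar{\beta}^{-1}:=(\beta_1^{-1},\dots,\beta_{d+s}^{-1})$ lies in $\bar{Z}$, and that $\bar{Z}$ is closed under coordinatewise multiplication in $\mathbb{K}$. Granting these, the lemma is immediate. Unravelling $T_i(y_1,\dots,y_{d+s})=(\beta_1 y_1+i,\dots,\beta_{d+s}y_{d+s}+i)$, the preimage of a point $x\in\mathbb{K}$ is
\[
T_i^{-1}(x)=\bigl((x_1-i)\beta_1^{-1},\dots,(x_{d+s}-i)\beta_{d+s}^{-1}\bigr)=\bar{\beta}^{-1}\cdot\bigl(x-i\,\bar{\beta}^{0}\bigr),
\]
where the product is coordinatewise and $\bar{\beta}^{0}=(1,\dots,1)$ is the multiplicative identity of $\mathbb{K}$. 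For $x\in\bar{Z}$ and $i\in\{-1,0,1\}\subset\mathbb{Z}$ we have $x-i\,\bar{\beta}^{0}\in\bar{Z}$, since $\bar{\beta}^{0}\in\bar{Z}$ and $\bar{Z}$ is a $\mathbb{Z}$-module, and then $T_i^{-1}(x)=\bar{\beta}^{-1}\cdot(x-i\,\bar{\beta}^{0})\in\bar{Z}$ by closure under multiplication.

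So it remains to verify the two preliminary facts. For the ring structure, I would use the already observed inclusion $T_0(\bar{Z})=\bar{\beta}\cdot\bar{Z}\subseteq\bar{Z}$: iterating shows $\bar{\beta}^{k}\in\bar{Z}$ for all $k\ge 0$, so every $\mathbb{Z}$-linear combination of powers of $\bar{\beta}$ lies in $\bar{Z}$, and conversely $\bar{Z}$ is spanned by $\bar{\beta}^{0},\dots,\bar{\beta}^{d+s-1}$ by definition; hence $\bar{Z}=\{q(\bar{\beta}):q\in\mathbb{Z}[x]\}$, a subring of $\mathbb{K}$ containing $\bar{\beta}^{0}$. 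For $\bar{\beta}^{-1}\in\bar{Z}$, I would use that $\beta$ is a root of a $\{-1,0,1\}$ polynomial; dividing out the largest power of $x$, we may assume it has the form $q(x)=c_0+c_1x+\dots+c_Nx^{N}$ with $c_j\in\{-1,0,1\}$ and $c_0\in\{-1,1\}$. The identity $q(\beta)=0$ has integer coefficients, hence holds for every Galois conjugate, so $q(\beta_\ell)=0$ for each $\ell$; dividing by $\beta_\ell\ne 0$ gives $\beta_\ell^{-1}=-c_0\bigl(c_1+c_2\beta_\ell+\dots+c_N\beta_\ell^{N-1}\bigr)$. Reading this across all coordinates simultaneously yields $\bar{\beta}^{-1}=-c_0\bigl(c_1\bar{\beta}^{0}+c_2\bar{\beta}^{1}+\dots+c_N\bar{\beta}^{N-1}\bigr)\in\bar{Z}$.

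The only point requiring any care — and it is minor — is the clearing of the constant term, i.e. the observation that a hyperbolic $\beta$ is an algebraic unit, so that a $\{-1,0,1\}$ polynomial vanishing at $\beta$ can be taken with constant term $\pm 1$ (equivalently, the minimal polynomial $p$ of $\beta$ divides such a polynomial $q$ in $\mathbb{Z}[x]$ by Gauss's lemma, whence $p(0)\mid q(0)=\pm1$ and so $p(0)=\pm 1$, giving $\beta^{-1}\in\mathbb{Z}[\beta]$ directly). This is the only step that is not purely formal; everything else is a short computation with the lattice $\bar{Z}$, and I do not anticipate a real obstacle.
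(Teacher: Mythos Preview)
Your proof is correct and follows essentially the same route as the paper: both reduce the lemma to the single algebraic fact that $\beta$ is a unit (equivalently, $\bar{\beta}^{-1}\in\bar{Z}$), which in turn is deduced from the constant term of an appropriate $\{-1,0,1\}$ polynomial being $\pm 1$. Your version is organised a bit more explicitly---you spell out the ring structure of $\bar{Z}$ and work with all coordinates simultaneously, whereas the paper handles the first coordinate and remarks that the Galois conjugates follow---but the key step and the justification via Gauss's lemma are the same.
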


\begin{proof}
We need only show that for $x=\sum_{i=0}^{d-1} z_i\beta^i$ where $z_0,\cdots, z_{d-1}\in \mathbb Z$ we have that there exist $z_0', \cdots z_{d-1}'$ such that $\frac{x}{\beta}=\sum_{i=0}^{d-1}z_i'\beta^i$. Once we have shown this for x, the corresponding results for the Galois conjugates follow directly.

The result holds because, for $\beta$ to be a root of a $\{-1,0,1\}$-polynomial, it is necessary that the final term $a_0$ of the minimal polynomial\footnote{The fact that $\beta$ is a root of a $\{-1,0,1\}$-polynomial isn't enough to imply that the {\it minimal} polynomial of $\beta$ has digits only $\{-1,0,1\}$, but it does follow that the largest and smallest terms in the minimal polynomial are $\pm1$.} of $\beta$ is $\pm1$. Then we use
\begin{align*}
0&=a_d\beta^d+a_{d_1}\beta^{d-1}+\cdots + a_1\beta+a_0\\
\implies \frac{1}{\beta}&=\frac{a_d}{-a_0}\beta^{d-1}+\cdots + \frac{a_1}{-a_0}.
\end{align*}

and since each of the terms $\frac{a_i}{-a_0}$ are integers, since $a_0=\pm 1$, we have that dividing by $\beta$ keeps numbers within the integer lattice as required.
\end{proof}

\begin{proposition}\label{cylinder gives code Pisot}

Suppose that $x\in \bar{X}(\beta)$ has $\pi_c(x)\in[\e_1,...,\e_n]^{\mathrm{o}}$ for some $\e_1,...,\e_n\in\{-1,0,1\}^n$. Then, under condition \ref{condition 1}, there are $a_1,...,a_\kappa\in\{-1,0,1\}$ such that 
	\begin{align*}
	T_{\e_1}\circ...\circ T_{\e_n}\circ T_{a_\kappa}\circ...\circ T_{a_1}(0)=x.
	\end{align*}	
\end{proposition}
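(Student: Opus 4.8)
The plan is to exploit the fact that $\pi_c(x)$ lies deep inside the cylinder $[\e_1,\cdots,\e_n]^{\mathrm{o}}$ to extract the last $n$ symbols of a code of $x$, and then to use Condition \ref{condition 1} to close off the remaining finitely many symbols near the origin. First I would recall that $[\e_1,\cdots,\e_n]=S_{\e_1}\circ\cdots\circ S_{\e_n}(\mathcal R)$, so $\pi_c(x)\in S_{\e_1}\circ\cdots\circ S_{\e_n}(\mathcal R^{\mathrm{o}})$ means $S_{\e_n}^{-1}\circ\cdots\circ S_{\e_1}^{-1}(\pi_c(x))\in\mathcal R^{\mathrm{o}}$. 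Now consider the point $y:=T_{\e_n}^{-1}\circ\cdots\circ T_{\e_1}^{-1}(x)$. By Lemma \ref{closedXbar}, $y\in\bar Z$, and since $\pi_c\circ T_i^{-1}=S_i^{-1}\circ\pi_c$ on $\mathbb K_c$-fibres (this is the higher-dimensional analogue of the identity $\pi_c(\tilde T_i\cdot)=S_i(\pi_c\cdot)$ used in the golden mean section), we get $\pi_c(y)\in\mathcal R^{\mathrm{o}}$.

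The key point is then to show that $y\in\bar X(\beta)$, and moreover that $y$ can be reached from $0$ by maps $T_{a_\kappa}\circ\cdots\circ T_{a_1}$; applying $T_{\e_1}\circ\cdots\circ T_{\e_n}$ to the equation $y=T_{a_\kappa}\circ\cdots\circ T_{a_1}(0)$ then gives exactly the claimed statement. To show $y\in\bar X(\beta)$ I would argue as follows. Since the $S_i$ are contractions with a common contraction ratio bound $c<1$ and $\bar X(\beta)$ is mapped into itself by each $T_i$, we have $x=T_{a_m}\circ\cdots\circ T_{a_1}(0)$ for some long code; writing $x=T_{\e_1}\circ\cdots\circ T_{\e_n}(y)$, the issue is whether $y$ itself lies in $\bar X(\beta)$ and, more specifically, whether after pulling back further by the $T_i^{-1}$ the $\mathbb K$-coordinates enter a bounded region near the origin. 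Because $T_i^{-1}$ contracts the expanding directions (the first $d$ coordinates), iterating $T_i^{-1}$ along symbols consistent with the cylinder structure drives the expanding part of $y$ towards $0$, while $\pi_c(y)\in\mathcal R^{\mathrm o}$ keeps the contracting part inside the window; after finitely many steps one reaches a point $z$ lying in $\bar Z\cap\pi_c^{-1}(\mathcal R^{\mathrm o})\cap \mathrm{cl}(B_\beta(1))$. By Condition \ref{condition 1} this set equals $\bar X(\beta)\cap\mathrm{cl}(B_\beta(1))$, so $z\in\bar X(\beta)$, i.e.\ $z=T_{b_\ell}\circ\cdots\circ T_{b_1}(0)$ for some $b_1,\cdots,b_\ell\in\{-1,0,1\}$. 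Composing back up recovers a $\{-1,0,1\}$-code of $y$ from $0$, which is the required $a_1,\cdots,a_\kappa$.

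The main obstacle I expect is the "finitely many steps to enter $\mathrm{cl}(B_\beta(1))$" argument: one must check that pulling back by $T_i^{-1}$ along symbols forced by the cylinder membership both (i) keeps $\pi_c$ inside $\mathcal R^{\mathrm o}$ — which follows because $\pi_c(y)\in\mathcal R^{\mathrm o}$ and $\mathcal R$ is the attractor, so $\pi_c(y)\in S_{d_1}\circ\cdots\circ S_{d_j}(\mathcal R^{\mathrm o})$ for arbitrarily long $d_1\cdots d_j$ and each pullback $S_{d_1}^{-1}$ lands back in $\mathcal R^{\mathrm o}$ — and (ii) drives the expanding coordinates of the corresponding point in $\bar Z$ into a ball of radius controlled by $R=1$. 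Point (ii) is essentially the same contraction estimate used in the proof of Lemma \ref{XPhiStructure} ("since inverse maps $\tilde T_i^{-1}$ contract the expanding direction, the expanding coordinate will eventually lie in $[-\phi,\phi]$"), now carried out coordinate-by-coordinate on the $d$ expanding Galois conjugates; the uniform rate comes from $\max_{2\le i\le d}|\beta_i|^{-1}<1$. Once the point is in $\bar Z\cap\pi_c^{-1}(\mathcal R^{\mathrm o})\cap\mathrm{cl}(B_\beta(1))$, Condition \ref{condition 1} does the rest, exactly as the finite check at the end of the golden-mean argument did there. I would also need the elementary observation that $T_i(\bar X(\beta)\setminus \bar X_1(\beta))\subseteq \bar X(\beta)\setminus\bar X_1(\beta)$, already recorded in the excerpt, to guarantee that the intermediate orbit points of any code of $x$ that eventually enters $\bar X_1(\beta)$ stay inside it, so that the code can be read off consistently.
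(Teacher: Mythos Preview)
Your proposal is correct and follows essentially the same route as the paper: pull $x$ back through $T_{\e_n}^{-1}\circ\cdots\circ T_{\e_1}^{-1}$ (using Lemma \ref{closedXbar} to stay in $\bar Z$), then continue pulling back along symbols chosen from the IFS structure so that the contracting coordinates remain in $\mathcal R^{\mathrm o}$ while the expanding coordinates contract into $\mathrm{cl}(B_\beta(1))$, at which point Condition \ref{condition 1} identifies the resulting lattice point as a member of $\bar X(\beta)$ and composing forward gives the code. Your closing remark about $T_i(\bar X(\beta)\setminus\bar X_1(\beta))\subseteq\bar X(\beta)\setminus\bar X_1(\beta)$ is not actually needed for this argument and can be dropped.
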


Recall that $[\e_1,\cdots,\e_n]$ is a subset of $\mathcal R$ defined in Definition \ref{CylinderDef}, and that $[\e_1,\cdots,\e_n]^{\mathrm{o}}$ is its interior.

\begin{proof}
	By the iterated function system construction of $\mathcal R$, the fact that $\pi_c(x)\in[\e_1,\cdots,\e_n]$ gives the existence of arbitrarily long words $a_1,\cdots a_m\in\{-1,0,1\}^m$ such that 	
	\begin{align*}
	\pi_c(x)\in S_{\e_1}\circ...\circ S_{\e_n}\circ S_{a_1}\circ...\circ S_{a_m}(\mathcal R).
	\end{align*} 	
	This implies that there is $y\in \bar{Z}$ with $\pi_c(y)\in\mathcal R$ such that 
	\begin{align*}
	x=T_{\e_1}\circ...\circ T_{\e_n}\circ T_{a_1}\circ...\circ T_{a_m}(y),
	\end{align*}
	the fact that $y\in \bar{Z}$ follows using Lemma \ref{closedXbar} using that $x\in \bar{Z}$. Now $x=(x_1\cdots,x_d,x_{d+1},\cdots x_{d+s})$ where the maps $T_i$ are expanding on the first $d$ coordinates and contracting on the final $s$ coordinates. Hence the maps $T_i^{-1}$ contract the first $d$ coordinates and for any $\epsilon>0$, for large enough $m$, the point
	\[
	y=(T_{\e_1}\circ...\circ T_{\e_n}\circ T_{a_1}\circ...\circ T_{a_m})^{-1}(x)
	\]
must have its first $d$ coordinates within distance $\epsilon$ of the box $\Pi_{i=1}^d I_{\beta_i}(1)$. But since these points lie in a uniformly discrete set, the first $d$ coordinates must actually lie in the closure of this box.

The final $s$ coordinates must be in $\mathcal R^{\mathrm{o}}$, since $\pi_c(x)\in S_{\e_1}\circ...\circ S_{\e_n}\circ S_{a_1}\circ...\circ S_{a_m}(\mathcal R^{\mathrm{o}})$. Thus
\[
(T_{\e_1}\circ...\circ T_{\e_n}\circ T_{a_1}\circ...\circ T_{a_m})^{-1}(x)\in \overline Z\cap \pi_c^{-1}(\mathcal R)\cap B_{\beta}(1),
\]
and so by Condition \ref{condition 1} there exists $b_1\cdots b_k\in\{-1,0,1\}^k$ such that
\[
(T_{\e_1}\circ...\circ T_{\e_n}\circ T_{a_1}\circ...\circ T_{a_m})^{-1}(x)=T_{b_1}\circ\cdots \circ T_{b_k}(0)\in \bar{X}_1(\beta).
\]
Then
\[
x=T_{\e_1}\circ...\circ T_{\e_n}\circ T_{a_1}\circ...\circ T_{a_m}\circ T_{b_1}\circ \cdots T_{b_k}(0)
\]
as required.
	
	
	
	
	

\end{proof}

	\begin{corollary}\label{cutprojectcor}
		Under condition \ref{condition 1}, $\bar{X}(\beta)=\bar{Z}\cap\pi_c^{-1}(\mathcal{R}^{\mathrm{o}})$.
	\end{corollary}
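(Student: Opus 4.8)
The plan is to prove the two inclusions separately. The inclusion $\bar{X}(\beta)\subseteq \bar{Z}\cap\pi_c^{-1}(\mathcal{R}^{\mathrm{o}})$ is essentially already established in the text: we know $\bar{X}(\beta)\subseteq\bar{Z}$ from the remarks after the definition of $\bar{Z}$, and we know $\pi_c(\bar{X}(\beta))\subseteq\mathcal{R}$ from the displayed computation just before Definition \ref{CylinderDef}. The only point needing care is promoting $\mathcal{R}$ to its interior $\mathcal{R}^{\mathrm{o}}$. For this I would argue that any $x\in\bar{X}(\beta)$ can be written $x=T_{a_n}\circ\cdots\circ T_{a_1}(0)$, so $\pi_c(x)=S_{a_n}\circ\cdots\circ S_{a_1}(0)$; since $0\in\mathcal{R}^{\mathrm{o}}$ (the origin is an interior point of the attractor of $\{S_{-1},S_0,S_1\}$, as $S_{-1}(\mathcal R), S_0(\mathcal R), S_1(\mathcal R)$ cover a neighbourhood of $0$) and each $S_i$ is an invertible affine contraction, hence an open map, the image $S_{a_n}\circ\cdots\circ S_{a_1}(\mathcal{R}^{\mathrm o})$ is open and contained in $\mathcal R$, so $\pi_c(x)\in\mathcal{R}^{\mathrm o}$.

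For the reverse inclusion $\bar{Z}\cap\pi_c^{-1}(\mathcal{R}^{\mathrm{o}})\subseteq\bar{X}(\beta)$, let $x\in\bar{Z}$ with $\pi_c(x)\in\mathcal{R}^{\mathrm{o}}$. Since $\mathcal R^{\mathrm o}$ is open and the cylinders $[\e_1,\dots,\e_n]$ shrink to points, there exist $\e_1,\dots,\e_n\in\{-1,0,1\}$ with $\pi_c(x)\in[\e_1,\dots,\e_n]^{\mathrm o}\subseteq\mathcal R^{\mathrm o}$ (one takes $n$ large enough that the cylinder containing $\pi_c(x)$ in its interior, or a sub-cylinder, fits inside $\mathcal R^{\mathrm o}$; here one should check that $\pi_c(x)$ can be placed in the \emph{interior} of some cylinder — if it lies on a cylinder boundary, pass to a deeper level, using that the countable union of cylinder boundaries is meagre, or more concretely that $\pi_c(x)$ being interior to $\mathcal R$ forces it eventually interior to some cylinder). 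Then Proposition \ref{cylinder gives code Pisot} applies directly and gives $a_1,\dots,a_\kappa\in\{-1,0,1\}$ with $T_{\e_1}\circ\cdots\circ T_{\e_n}\circ T_{a_\kappa}\circ\cdots\circ T_{a_1}(0)=x$, which exhibits $x$ as an element of $\bar{X}(\beta)$ by its very definition as $\{T_{b_m}\circ\cdots\circ T_{b_1}(0)\}$.

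The main obstacle is the bookkeeping around interiors and cylinder boundaries: making sure that a point interior to $\mathcal R$ is genuinely interior to some finite cylinder $[\e_1,\dots,\e_n]$, so that Proposition \ref{cylinder gives code Pisot} (which is stated with the interior hypothesis $\pi_c(x)\in[\e_1,\dots,\e_n]^{\mathrm o}$) can be invoked. This is true because $x\in\mathcal R^{\mathrm o}$ sits in an open ball $B\subseteq\mathcal R^{\mathrm o}$, the diameters of level-$n$ cylinders tend to $0$ uniformly, and $\mathcal R=\bigcup_{\e}[\e_1,\dots,\e_n]$ for each $n$, so for $n$ large any cylinder meeting $x$ lies inside $B$; a short compactness/covering argument then locates $x$ in the interior of one such cylinder (or in the interior of the union of the finitely many cylinders containing it, which one then refines). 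Everything else is a direct citation of already-proved statements.
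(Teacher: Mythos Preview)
Your proposal is correct in spirit, but the reverse inclusion is overcomplicated compared to the paper. The paper simply observes that the corollary is Proposition \ref{cylinder gives code Pisot} applied with the \emph{empty} word $\e_1\cdots\e_n$: then $[\,\cdot\,]=\mathcal R$ and its interior is $\mathcal R^{\mathrm o}$, so the hypothesis $\pi_c(x)\in[\e_1,\dots,\e_n]^{\mathrm o}$ becomes exactly $\pi_c(x)\in\mathcal R^{\mathrm o}$, and the conclusion reads $x=T_{a_\kappa}\circ\cdots\circ T_{a_1}(0)\in\bar X(\beta)$. All of your cylinder-boundary bookkeeping (your ``main obstacle'') evaporates.

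There is one point you and the paper both gloss over: Proposition \ref{cylinder gives code Pisot} as \emph{stated} assumes $x\in\bar X(\beta)$, which is precisely what you are trying to prove. The resolution is that the \emph{proof} of that proposition uses only $x\in\bar Z$ (via Lemma \ref{closedXbar}), never the stronger hypothesis; the paper's one-line justification implicitly relies on this. You should flag this rather than write ``Proposition \ref{cylinder gives code Pisot} applies directly''.

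For the forward inclusion, which the paper does not separately address, your open-map argument is fine once you know $0\in\mathcal R^{\mathrm o}$. Your justification for that (that $S_{-1}(\mathcal R),S_0(\mathcal R),S_1(\mathcal R)$ cover a neighbourhood of $0$) is not obvious for a general contracting IFS attractor; the clean route is to read it off Condition \ref{condition 1} directly, since $0\in\bar X(\beta)\cap cl(B_\beta(1))$ forces $\pi_c(0)=0\in\mathcal R^{\mathrm o}$.
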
	
This is just the statement of the previous proposition with $\e_1,\cdots \e_n$ being the empty word. A similar statement appears as Corollary 4.5 in \cite{HareMasakovaVavra}.

	\begin{lemma}\label{Matrices and dynamics}
		Let $i,j \in\{1,\cdots,k\}$. Then 
		 there exists $c_1,...,c_n\in\{-1,0,1\}$ such that		
		\begin{align*}
		(A_{c_1}\cdot...\cdot A_{c_n})_{ij}>0.
		\end{align*} 
	\end{lemma}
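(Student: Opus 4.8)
The statement is a strong-connectivity claim for the "product automaton" on $\Delta=\{v_1,\dots,v_k\}$: for every ordered pair $(i,j)$ there is a word $c_1\cdots c_n$ with $(A_{c_1}\cdots A_{c_n})_{ij}>0$. By Lemma~\ref{AiTransitions}, $(A_{c_1}\cdots A_{c_n})_{ij}>0$ exactly when there is a choice of $d_1,\dots,d_n\in\{-1,0,1\}$ (the "difference" digits, recorded in the matrices as $1$ or $2$) with
\[
T_{d_n-c_n}\circ\cdots\circ T_{d_1-c_1}(v_i)=v_j .
\]
So I need to show: for any $v_i,v_j\in\Delta$ there is a finite sequence of maps from $\{T_{-2},\dots,T_2\}$ sending $v_i$ to $v_j$, such that the corresponding "$c$"-string is a legal $\{-1,0,1\}$-string (which is automatic, since each difference $d_m-c_m\in\{-2,\dots,2\}$ can be realised with $c_m\in\{-1,0,1\}$, $d_m\in\{-1,0,1\}$). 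Equivalently, I must connect $v_i$ to $v_j$ inside $\Delta$ using the maps $T_r$, $r\in\{-2,\dots,2\}$, noting that $T_r$ applied to an element of $\Delta$ stays in $\bar X(\beta)$ but need not stay in $\Delta$ — so the real content is that one can always route through $\Delta$.

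The cleanest route is via two reductions, both to the origin $v_1=0$. First, every $v_i\in\Delta$ can be driven to $0$: by definition of $\Delta$ (Definition~\ref{DeltaDef}) each $v_i=x-y$ with $x,y\in\bar X(\beta)$ and some common word mapping them together; more directly, $\Delta\subseteq cl(B_\beta(1))$ and by the definition of $\Delta$ there exist $\varepsilon_1\cdots\varepsilon_m\in\{-1,0,1\}^m$ with $T_{\varepsilon_m}\circ\cdots\circ T_{\varepsilon_1}(v_i)\in\bar X(\beta)$ collapsing the difference to $0$; taking the two contributing codes $c,d$ from the definition one gets $(A_{\varepsilon}\cdots)_{i,1}>0$. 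Along the way all intermediate points remain in $\Delta$, because a point of the form "difference of two orbit points that are still collapsible" is exactly a point of $\Delta$, and $T_r$ of such a point (with $r=d-c$, $c,d\in\{-1,0,1\}$) is again such a point. This gives $(A_{c_1}\cdots A_{c_m})_{i,1}>0$ for a suitable legal string. Second, $0$ can be driven to every $v_j$: apply Lemma~\ref{XPhiStructure}'s higher-dimensional analogue, namely Proposition~\ref{cylinder gives code Pisot} / Corollary~\ref{cutprojectcor}, which (under Condition~\ref{condition 1}) says $v_j\in\bar X(\beta)=\bar Z\cap\pi_c^{-1}(\mathcal R^{\mathrm o})$, so $v_j=T_{a_n}\circ\cdots\circ T_{a_1}(0)$ for some $a_1\cdots a_n\in\{-1,0,1\}^n$; since $v_j\in\Delta$ and each $T_{a}(0)$ with $a\in\{-1,0,1\}$ stays collapsible (it is a prefix of a word coding $v_j$, which itself collapses), every intermediate point is in $\Delta$ and we get $(A_{a_1}\cdots A_{a_n})_{1,j}>0$. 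Concatenating, $(A_{c_1}\cdots A_{c_m}A_{a_1}\cdots A_{a_n})_{i,j}>0$, using that the product of non-negative matrices keeps a positive entry positive when the "middle index" matches — here the middle index is $1$, which is exactly why routing through $0$ works.

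The main obstacle is the bookkeeping that intermediate orbit points actually lie in $\Delta$ rather than merely in $\bar X(\beta)$ or $\bar Z$: one must argue that applying $T_{d-c}$ (with $c,d\in\{-1,0,1\}$) to a point of $\Delta$ lands in $\Delta$, and symmetrically that applying $T_a$ ($a\in\{-1,0,1\}$) along a code of $v_j\in\Delta$ keeps us in $\Delta$. For the first, if $v=x-y$ with $x,y\in\bar X(\beta)$ and there are words $p,q$ with $T_{p}(x)=T_{q}(y)$ — wait, that is not literally how $\Delta$ is phrased; $\Delta$ consists of $x-y$ where $T_{c_n}\cdots T_{c_1}(x)=T_{d_n}\cdots T_{d_1}(y)$ — then $T_{a}(x)-T_{b}(y)=T_{a}(x-y)+(b-a)\cdot(\text{const})$; one checks directly that $T_{d-c}(v)$ is again a difference of two $\bar X(\beta)$-points collapsible to a common future point, by prepending the digit to both codes. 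This is the same computation as in the proof of Lemma~\ref{AiTransitions} (the identity $T_j(x+v_m)=T_i(x)+T_{j-i}(v_m)$), so it is routine but must be stated. For the second, any prefix $T_{a_1}(0), T_{a_2}T_{a_1}(0),\dots$ of a code realising $v_j$ is itself a point of $\bar X(\beta)$ whose image under the remaining maps is $v_j\in\Delta$, hence it is collapsible to wherever $v_j$ collapses, so it lies in $\Delta$. Once these two closure facts are in hand, the proof is just "drive $v_i\to 0\to v_j$ and multiply the corresponding matrices."

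\begin{proof}[Proof sketch to be expanded]
By Lemma~\ref{AiTransitions} and the definition of the matrices $A_c$, the entry $(A_{c_1}\cdots A_{c_n})_{ij}$ is positive precisely when there exist $d_1,\dots,d_n\in\{-1,0,1\}$ with $T_{d_n-c_n}\circ\cdots\circ T_{d_1-c_1}(v_i)=v_j$, where moreover every intermediate point $T_{d_m-c_m}\circ\cdots\circ T_{d_1-c_1}(v_i)$ lies in $\Delta$. Hence it suffices to exhibit, for every pair $(i,j)$, a sequence of maps $T_{r}$ with $r\in\{-2,\dots,2\}$ carrying $v_i$ to $v_j$ while staying inside $\Delta$, since each such $r$ can be written $r=d-c$ with $c,d\in\{-1,0,1\}$.

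\emph{Step 1: $v_i$ collapses to $0$ inside $\Delta$.} By Definition~\ref{DeltaDef}, $v_i=x-y$ for some $x,y\in\bar X(\beta)$ and words $e_1\cdots e_m,\ f_1\cdots f_m\in\{-1,0,1\}^m$ with $T_{e_m}\circ\cdots\circ T_{e_1}(x)=T_{f_m}\circ\cdots\circ T_{f_1}(y)$. Using the identity $T_a(u)-T_b(w)=T_{a-b}(u-w)$ applied to the orbits of $x$ and $y$, the points $T_{e_\ell-f_\ell}\circ\cdots\circ T_{e_1-f_1}(v_i)$ are differences of $\bar X(\beta)$-points that collapse to a common future point, hence lie in $\Delta$; after $m$ steps the difference is $0=v_1$. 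Writing $e_\ell-f_\ell=f_\ell'-e_\ell'$ trivially with $e_\ell',f_\ell'\in\{-1,0,1\}$, this yields a legal string $e_1'\cdots e_m'$ with $(A_{e_1'}\cdots A_{e_m'})_{i,1}>0$.

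\emph{Step 2: $0$ reaches $v_j$ inside $\Delta$.} Since $v_j\in\Delta\subseteq\bar X(\beta)$, Proposition~\ref{cylinder gives code Pisot} (equivalently Corollary~\ref{cutprojectcor}), which uses Condition~\ref{condition 1}, gives $a_1\cdots a_n\in\{-1,0,1\}^n$ with $T_{a_n}\circ\cdots\circ T_{a_1}(0)=v_j$. Each prefix point $T_{a_\ell}\circ\cdots\circ T_{a_1}(0)$ maps under $T_{a_n}\circ\cdots\circ T_{a_{\ell+1}}$ to $v_j\in\Delta$, and $v_j$ itself collapses (being in $\Delta$), so each prefix point lies in $\Delta$. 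Taking $d_\ell=a_\ell,\ c_\ell=0$ we get $(A_{0}\cdots A_{0})_{1,j}>0$ along this word; more precisely the product of the $A_{c_\ell}$ with $c_\ell\equiv 0$ has positive $(1,j)$ entry.

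\emph{Step 3: concatenation.} Since the $A_c$ are non-negative and $(A_{e_1'}\cdots A_{e_m'})_{i,1}>0$, $(A_{0}^{n})_{1,j}>0$ (with $A_0$ replaced by the appropriate word from Step 2), the product
\[
\bigl(A_{e_1'}\cdots A_{e_m'}\,A_{0}^{n}\bigr)_{ij}\ \ge\ (A_{e_1'}\cdots A_{e_m'})_{i,1}\,(A_{0}^{n})_{1,j}\ >\ 0,
\]
which gives the claim with $c_1\cdots c_{m+n}=e_1'\cdots e_m'\,0^n$.
\end{proof}
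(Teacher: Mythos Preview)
Your overall route---drive $v_i\to 0\to v_j$ and concatenate---is exactly the paper's strategy, and your Step~1 is essentially the paper's argument with the helpful bookkeeping (that intermediate differences remain in $\Delta$) spelled out.

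The gap is in Step~2. You assert $\Delta\subseteq\bar X(\beta)$ and then invoke Corollary~\ref{cutprojectcor} (hence Condition~\ref{condition 1}) to obtain a $\{-1,0,1\}$-code for $v_j$. But $\Delta$ consists of \emph{differences} $x-y$ with $x,y\in\bar X(\beta)$; such differences lie in the $\{-2,\dots,2\}$-spectrum, not in $\bar X(\beta)$ in general (e.g.\ $\pi_c(v_j)\in\mathcal R-\mathcal R$, which need not sit inside $\mathcal R^{\mathrm o}$). So the inclusion is unjustified, and the appeal to Condition~\ref{condition 1} is both unnecessary and insufficient. Note also that the lemma as stated carries no hypothesis about Condition~\ref{condition 1}, so importing it would weaken the result.

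The paper's Step~2 avoids this entirely by mirroring Step~1. Since $v_j\in\Delta$, write $v_j=x'-y'$ with $x',y'\in\bar X(\beta)$; take codes $e_1\cdots e_n,\ f_1\cdots f_n\in\{-1,0,1\}^n$ with $x'=T_{e_n}\cdots T_{e_1}(0)$, $y'=T_{f_n}\cdots T_{f_1}(0)$ (pad with zeros to equalise lengths). Then $v_j=T_{e_n-f_n}\circ\cdots\circ T_{e_1-f_1}(0)$ with each $e_\ell-f_\ell\in\{-2,\dots,2\}$, and every intermediate point $T_{e_\ell-f_\ell}\circ\cdots\circ T_{e_1-f_1}(0)$ equals $T_{e_\ell}\cdots T_{e_1}(0)-T_{f_\ell}\cdots T_{f_1}(0)$, a difference of $\bar X(\beta)$-points that collapses (via the remaining suffixes to $x',y'$ and then via the collapse witnessing $v_j\in\Delta$), hence lies in $\Delta$. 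Now pick $c_\ell\in\{-1,0,1\}$ with $(e_\ell-f_\ell)-c_\ell\in\{-1,0,1\}$, and the definition of $A_{c_\ell}$ gives $(A_{c_1}\cdots A_{c_n})_{1,j}>0$. Concatenating with your Step~1 finishes the proof, with no appeal to Condition~\ref{condition 1}.
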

	
	\begin{proof}
The definition of $\Delta$ means there exist $a_1\cdots a_m\in\{-2,-1,0,1,2\}^m$ and $a_{m+1}\cdots a_n\in\{-2,-1,0,1,2\}$ such that $T_{a_m}\circ \cdots \circ T_{a_1}(v_i)=0$ and $T_{a_{m+1}}\circ \cdots \circ T_{a_n}(0)=v_j$. Then choosing $c_1\cdots c_m$ such that $a_i-c_i\in\{-1,0,1\}$ for each $i$ the result follows directly from the definition of $A_i$. 
	\end{proof}
	
The following lemma is important in defining for us a `mixing word' $a_n\cdots a_1\in\{-1,0,1\}^n$.
	
	\begin{proposition}\label{magic wordPisot}
		There is a word $w=w_1,...,w_n\in\{-1,0,1\}^n$ and $I,J\subseteq\Delta$ such that $0\not\in I, 0\not\in J$ and $(A_{w_1}\cdot...\cdot A_{w_n})_{i,j}=0\Leftrightarrow i\in I\text{ or } j\in J$.
	\end{proposition}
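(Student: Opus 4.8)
The plan is to build the word $w$ by concatenating the words produced by Lemma \ref{Matrices and dynamics} and then cleaning up the resulting matrix. First I would fix an enumeration of $\Delta=\{v_1,\dots,v_k\}$ and, for every ordered pair $(i,j)\in\{1,\dots,k\}^2$, invoke Lemma \ref{Matrices and dynamics} to obtain a word $u^{(i,j)}=u^{(i,j)}_1\cdots u^{(i,j)}_{n_{i,j}}\in\{-1,0,1\}^{n_{i,j}}$ with $(A_{u^{(i,j)}_1}\cdots A_{u^{(i,j)}_{n_{i,j}}})_{ij}>0$. Concatenating all of these (in any fixed order) gives a word $w'$ whose associated matrix $B':=A_{w'_1}\cdots A_{w'_m}$ has the property that \emph{every} entry that is \emph{not} forced to be zero is strictly positive: indeed, since all the $A_i$ are non-negative and since $(A_{i_1}\cdots A_{i_p})(A_{j_1}\cdots A_{j_q})\geq (A_{i_1}\cdots A_{i_p})$ entrywise along any path that can be continued (more precisely, if $(CD)_{i\ell}$ and $(DE)_{\ell j}$ are both positive then so is $(CDE)_{ij}$ when $D$ has a positive entry in position $(\ell,\ell)$, or one simply picks the sub-block corresponding to the chosen pair), the positivity witnessed by the factor $u^{(i,j)}$ survives in the product. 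The honest way to phrase this: call a pair $(i,j)$ \emph{reachable} if some product of $A$'s is positive in position $(i,j)$; the concatenated word realises positivity simultaneously for all reachable pairs, because prepending or appending non-negative matrices cannot destroy a strictly positive entry \emph{provided} the extra factors do not annihilate the relevant row or column.

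That last proviso is exactly where the sets $I$ and $J$ come from, and handling it cleanly is the step I expect to be the main obstacle. Define $I:=\{i: (B')_{i,j'}=0 \text{ for all } j'\}$, the set of indices of zero rows of $B'$, and $J:=\{j: (B')_{i',j}=0\text{ for all } i'\}$, the set of zero columns. I would first check $1\notin I$ and $1\notin J$: since $v_1=0\in\Delta$ and $0$ can be reached from $0$ and can reach $0$ (taking the empty word, or the word $0$, which has $(A_0)_{1,1}=2>0$), the pair $(1,1)$ is reachable, so row $1$ and column $1$ of $B'$ are both nonzero. Next I must verify the biconditional $(B')_{ij}=0 \iff i\in I \text{ or } j\in J$. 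The direction ``$\Leftarrow$'' is immediate from the definitions of $I,J$. For ``$\Rightarrow$'', suppose $i\notin I$ and $j\notin J$; I must show $(B')_{ij}>0$. Here I would argue that $i\notin I$ means there is some $\ell$ with $(B')_{i\ell}>0$, i.e. $i$ can reach $\ell$ via the word $w'$; and $j\notin J$ means there is some $\ell'$ with $(B')_{\ell'j}>0$. The cleanest fix is to choose $w'$ more carefully from the start: rather than only concatenating the $u^{(i,j)}$, I would also ensure that the word $w'$ begins and ends in a way that makes every non-annihilated index behave uniformly — concretely, one can first concatenate all $u^{(i,j)}$ over reachable pairs, then note that the resulting word already has the stated property because reachability is transitive and the factor structure lets positive entries propagate.

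Let me restate the core combinatorial claim I need, since that is really the heart of it: if $(i,j)$ is reachable and we have fixed a single word $w'$ (the big concatenation) whose product $B'$ has $(B')_{ij}>0$ for \emph{every} reachable pair, then $(B')_{ij}=0$ precisely when $(i,j)$ is not reachable, and non-reachability of $(i,j)$ is equivalent to: no product is positive in row $i$ at all (so $i\in I$), or no product is positive in column $j$ at all (so $j\in J$). This equivalence is a standard fact about the directed graph on $\{1,\dots,k\}$ whose edges are the positive entries of $A_{-1}+A_0+A_1$: $(i,j)$ fails to lie on any directed walk iff $i$ has no outgoing walk of the length of $w'$ reaching anything useful, or $j$ has no incoming one — and because the combined support graph of the $A_i$ has the property (from Lemma \ref{Matrices and dynamics}) that the set of reachable pairs is exactly (co-reachable from a common vertex) $\times$ (reachable to a common vertex), this collapses to the row/column condition. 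I would spell this graph-theoretic lemma out explicitly, deduce $0\notin I$ and $0\notin J$ from $(1,1)$ being reachable, set $w:=w'$, and conclude. The main work, and the only genuinely delicate point, is pinning down that the ``bad'' set of pairs is a product set $I\times\{1,\dots,k\}\cup\{1,\dots,k\}\times J$ rather than something more complicated — i.e. that unreachability factors through rows and columns — which follows from the existence, for each $i\notin I$ and $j\notin J$, of intermediate indices connecting them, combined with transitivity of reachability along the single long word $w$.
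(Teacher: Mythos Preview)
Your proposal has a genuine gap. You concatenate all the Lemma~\ref{Matrices and dynamics} words into one long $w'$ and then assert that the resulting product $B'$ has a ``rows $\times$ columns'' zero pattern, but you never actually establish this, and the hand-wave ``prepending or appending non-negative matrices cannot destroy a strictly positive entry provided the extra factors do not annihilate the relevant row or column'' is false in the form you need it. Concretely: if the factor $u^{(i,j)}$ sits in the middle of $w'$, the prefix product may already have row $i$ equal to zero (because the very first letter $A_{w'_1}$ can have row $i$ identically zero), and then $(B')_{i,\cdot}=0$ regardless of what $u^{(i,j)}$ does. Note also that Lemma~\ref{Matrices and dynamics} says \emph{every} pair $(i,j)$ is reachable, so if your argument were correct it would force $I=J=\emptyset$ and $B'$ strictly positive --- yet nothing in your construction guarantees that. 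Your later attempt to identify $I,J$ with ``unreachable'' rows/columns conflates reachability by \emph{some} word with reachability along the \emph{specific} word $w'$; the former is vacuous here, and you have not controlled the latter.

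The missing idea, which is what the paper does, is to exploit the fact that $(A_i)_{1,1}>0$ for every $i\in\{-1,0,1\}$: once an entry in column $1$ becomes positive, it stays positive under any further right-multiplication by the $A_i$. So one builds the first half of $w$ iteratively through the indices $m=2,3,\dots,k$: at stage $m$, either row $m$ of the current product is already zero (declare $m\in I$), or it has a positive entry in some column $p$, in which case one appends a Lemma~\ref{Matrices and dynamics} word taking $p$ to $1$. This forces entry $(m,1)$ positive while all previously-achieved positive entries in column $1$ persist. At the end, every row not in $I$ has its column-$1$ entry positive. Running the same procedure on the transposes gives a second word whose product has every column not in some $J$ with positive row-$1$ entry. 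The concatenation of the two halves then satisfies $(A_w)_{i,j}\ge (\text{first half})_{i,1}(\text{second half})_{1,j}>0$ whenever $i\notin I$ and $j\notin J$, and is zero otherwise. Your sketch gestures at ``choosing $w'$ more carefully'' but never supplies this funnelling-through-vertex-$1$ mechanism, which is the actual content of the proof.
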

	\begin{proof}
We start by building a set $I$ and a word $w_1,\cdots, w_m$ such that the $i$th row of $A_{w_1}.\cdots.A_{w_m}$ is a zero row for $i\in I$ and $(A_{w_1}.\cdots.A_{w_m})_{i,1}>0$ otherwise.

		
		Step 1: Note that for $i\in\{-1,0,1\}$, $(A_i)_{1,1}>0$.
		
		Step 2: The point $v_2$ is in $\Delta$, and from the definition of $\Delta$ and lemma \ref{Matrices and dynamics} there exist $w_1\cdots w_{m_1}\in\{-1,0,1\}$ such that 
		\begin{align*}
		 (A_{w_1}\cdots A_{w_{m_1}})_{2,1}>0
		\end{align*}
		
		Step 3: Either the $3$rd row of the product $A_{w_1}\cdots A_{w_{m_1}}$ is a zero row, in which case we declare $v_3\in I$, or there exists $v_{p}\in \Delta$ with $(A_{w_1}\cdots A_{w_{m_1}})_{3,p}>0$. As in step 2, since $v_p\in \Delta$ choose a word $w_{m_1+1}\cdots w_{m_2}$ such that
		\[
		(A_{w_{m_1+1}}\cdots A_{w_{m_2}})_{p,1}>0.
		\]
		Then the product of matrices $A_{w_1}\cdots A_{w_{m_2}}$ has that entry $(3,1)$ is positive. Furthermore, entry $(2,1)$ is still positive, since $A_{w_1}\cdots A_{w_{m_1}}$ had entry $(2,1)$ positive, and then we are post multiplying by matrices with positive top left entry.
		
		Iterating this procedure, we create a word $w_1\cdots w_{m_k}$ and a set $I\subset \Delta$ such that the $i$th row of $A_{w_1}.\cdots.A_{w_{m_k}}$ is a zero row for $i\in I$ and $(A_{w_1}.\cdots.A_{w_{m_k}})_{i,1}>0$ otherwise.

Note that the matrices $A_1^T, A_0^T, A_{-1}^T$ also have top left entry strictly positive and that for any $i\in\{1,\cdots k\}$ there exists a word $c_1\cdots c_n$ such that $(A_{c_1}\cdots A_{c_n})_{(i,1)}>0$. So we repeat the above procedure for the matrices $A_1^T, A_0^T, A_{-1}^T$ to create a word $w_1'\cdots w_{n_k}'$ and a set $J$ such that the $j$th row of $A^T_{w_1'}\cdots A^T_{w_{n_k}'}$ is a zero row for $j\in J$, and $(A^T_{w_1'}\cdots A^T_{w_{n_k}'})_{(j,1)}>0$ otherwise.  

Taking the transpose once more gives us that the product $A_{w_{n_k}'}\cdots A_{w_1'}$ has a set $J$ of zero columns, and for all other columns the first entry is strictly positive.

Now setting $w_1\cdots w_n=w_1\cdots w_{m_k}w_{n_k}'\cdots w_1'$ we see that the product $A_{w_1}\cdots A_{w_n}$ has a set $I$ of zero rows, a set $J$ of zero columns, with all other entries strictly positive as required.

	\end{proof}

\begin{definition}
	Let the mixing word $w=w_1,...,w_n$ and $A_w=A_{w_1}\cdot...\cdot A_{w_n}$ where $w_1,...,w_n$ are as in Proposition \ref{magic wordPisot}
\end{definition}

		
	
Recall that we defined the $1\times k$ vectors
\[v(x)=(\mu(x), \mu(x+v_2),\cdots ,\mu(x+v_k))\] 
where $\Delta=(v_1,\cdots,v_k)$ with $v_1=0$. Map the space of $1\times k$ vectors with positive first entry onto projective space by letting $(V')_i=\frac{(V)_{i+1}}{(V)_1}$ for $1\leq i \leq 16$, giving
\[
v'(x)=\left(\frac{\mu(x+v_2)}{\mu(x)}, \frac{\mu(x+v_3)}{\mu(x)},\cdots \frac{\mu(x+v_k)}{\mu(x)}\right)
\]
As before, define the projective distance by
\[
d(U,V)=\max_{i\in\{1,\cdots,k-1\}} |\ln((V)_i)-\ln((U)_i)|\in[0,\infty].
\]
Here $\ln(0)-\ln(0)$ should be understood to take value $0$.
\begin{prop}\label{ConPro}
There exist $C_1>0$ and $C_2\in(0,1)$ such that for any $1\times k$ vectors $U,V$,
\begin{itemize}
\item $d(UA_w,VA_w)<C_1$
\item if $d(U,V)<\infty$ then $d(UA_w,VA_w)<C_2d(U,V).$
\item if $d(U,V)<\infty$ then $d(UA_i,vA_i)<d(U,V)$ for any $i\in\{-1,0,1\}$.
\end{itemize}
\end{prop}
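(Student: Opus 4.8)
The plan is to deduce Proposition \ref{ConPro} from the abstract Hilbert-metric-contraction lemma, Lemma \ref{MatrixCondition} (stated for the golden mean but, as the paper promises, proved in full generality in this section). The key observation is that the mixing word matrix $A_w = A_{w_1}\cdots A_{w_n}$ produced by Proposition \ref{magic wordPisot} has \emph{exactly} the block structure required by the hypothesis of Lemma \ref{MatrixCondition}: by that proposition there are sets $I, J \subseteq \{1,\dots,k\}$ with $1 \notin I \cup J$ such that $(A_w)_{i,j} = 0$ iff $i \in I$ or $j \in J$, and $(A_w)_{i,j} > 0$ otherwise. So for any pair $(i,j)$: if $i \in I$ then the $i$th row of $A_w$ is entirely zero (condition 1), if $j \in J$ then the $j$th column is entirely zero (condition 2), and otherwise $(A_w)_{i,j} > 0$ (condition 3). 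Since $1 \notin I$, we also have $(A_w)_{1,1} > 0$, which is the remaining hypothesis $A_{1,1} > 0$. Hence $A = A_w$ satisfies the hypotheses of Lemma \ref{MatrixCondition}.

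First I would invoke Lemma \ref{MatrixCondition} with $A = A_w$: it yields a constant $C < 1$ such that $d((UA_w)', (VA_w)') < C\, d(U',V')$ whenever $d(U',V') < \infty$, and a constant $K > 0$ such that $d((UA_w)', (VA_w)') < K$ for \emph{all} $U, V$ with positive first entry (even when $d(U',V') = \infty$). Setting $C_1 = K$ and $C_2 = C$ gives the first two bullet points, modulo one bookkeeping point: the statement of Proposition \ref{ConPro} is phrased in terms of $d(UA_w, VA_w)$ applied to the raw $1\times k$ vectors, whereas Lemma \ref{MatrixCondition} is phrased via the primed (projective) vectors. Since the metric $d$ on $1\times k$ vectors as used here is by definition the max over coordinates $i$ of $|\ln(\text{ratio}_i)|$ — i.e. it only ever depends on $V'$ — these are literally the same quantity, so I would just note $d(U,V) = d(U', V')$ under the paper's conventions and the translation is immediate. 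One does need that $UA_w$ and $VA_w$ have strictly positive first entry so that the primed vector is defined: this holds because $U, V$ have positive first entry and, post-multiplying by $A_w$ which has $(A_w)_{1,1} > 0$, the first entry of the product is at least $(U)_1 (A_w)_{1,1} > 0$ (all entries being non-negative).

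For the third bullet point, I would use the non-expansion remark already recorded in the golden-mean section: for any non-negative $k \times k$ matrix $B$ with strictly positive top-left entry and any $1\times k$ vectors $U, V$ with strictly positive first entries, $d((UB)', (VB)') \le d(U', V')$. Each of $A_{-1}, A_0, A_1$ is non-negative by construction (Definition \ref{Ai}), and each has $(A_i)_{1,1} > 0$: indeed $T_{-i}(v_1) = T_{-i}(0) = -i$, so for $i = 0$ we get $(A_0)_{1,1} = 2$, and for $i = \pm 1$ the off-diagonal-in-the-definition case $\exists j \in \{-1,1\}: T_{j-i}(0) = 0$ is satisfied by $j = i$, giving $(A_i)_{1,1} = 1$. (If one prefers, $v_1 = 0$ is fixed by nothing but $T_0$, yet the "$1$" clause in Definition \ref{Ai} with $j=i$ still fires; in any case a direct check gives $(A_i)_{1,1} \ge 1$.) Applying the non-expansion estimate with $B = A_i$ gives $d(UA_i, VA_i) \le d(U,V)$; to get the strict inequality claimed, I would either relax the statement to $\le$ (which is all that is used downstream) or observe that equality forces a rigid relationship between $U$ and $V$ that cannot hold for distinct projective classes — but I would not belabour this, as the weak inequality suffices for every subsequent application.

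The main obstacle, such as it is, is not analytic but organisational: making sure Lemma \ref{MatrixCondition} is actually available in the generality needed. The lemma as displayed is attached to the golden-mean ($17 \times 17$) discussion, but its statement is purely about an abstract $N \times N$ non-negative matrix with the stated row/column/positivity trichotomy, and the paper explicitly says "This lemma is proved carefully in section \ref{GeneralSec}." So I would simply cite it, with $N = k$, and the rest is the routine verification above. The one genuinely delicate point worth stating cleanly inside the proof is why the trichotomy hypothesis of Lemma \ref{MatrixCondition} follows from the weaker-looking conclusion of Proposition \ref{magic wordPisot}: namely, "$i \in I$" must be upgraded from "$(A_w)_{i,j}=0$ for this particular $j$" to "the whole $i$th row vanishes", which is exactly what Proposition \ref{magic wordPisot} asserts (the vanishing pattern is a product $I \times \{1,\dots,k\} \cup \{1,\dots,k\} \times J$), and similarly for columns. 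With that in hand the proof is a two-line application.
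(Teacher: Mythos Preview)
Your approach is correct and is essentially the same as the paper's: both reduce to Birkhoff's Hilbert-metric contraction applied to a matrix that is strictly positive on a block determined by Proposition~\ref{magic wordPisot}, with zero rows and columns outside. The paper's own treatment is in fact terser than yours---it simply cites Birkhoff, notes that the extension to the block structure is a ``simple modification'', and defers the details to the first author's thesis---so your explicit verification that $A_w$ satisfies the row/column/positivity trichotomy and that each $(A_i)_{1,1}>0$ fills in exactly what the paper omits. The one organisational wrinkle you flag is real: Lemma~\ref{MatrixCondition} is advertised as being ``proved carefully in section~\ref{GeneralSec}'', but what actually appears there is Proposition~\ref{ConPro} with its proof deferred, so citing the lemma to prove the proposition is mildly circular on the page; substantively, though, both statements are the same Birkhoff-type fact and your argument is sound.
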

If $A_w$ was a strictly positive matrix, this would be a standard result of Birkhoff \cite{GBirkhoff}. It is a simple modification to extend this to the matrices $A_w$, which are strictly positive on some block with all entries outside of this block zero. Details of this proof are given in the first author's thesis.

	\begin{proposition}\label{mainpro}
	Assume that Condition \ref{condition 1} holds. Then there exist positive constants $C_1, C_2$ such that for any word $a_1\cdots a_r\in\{-1,0,1\}^n$ and for any $x,y\in \bar X(\beta)$ with $\pi_c(x), \pi_c(y)\in[a]^{\mathrm{o}}$,
	\[
	d(v'(x),v'(y))<C_1C_2^{d(a)-1}
	\]
	where $d(a)$ is the number of disjoint occurences of $w$ in $a=a_1\cdots a_n$.
	\end{proposition}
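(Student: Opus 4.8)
The plan is to combine Proposition \ref{cylinder gives code Pisot}, which lets us read off the tail of a code of $x$ from the cylinder $[a]^{\mathrm{o}}$ containing $\pi_c(x)$, with the contraction estimates of Proposition \ref{ConPro}. First I would fix $x,y\in\bar X(\beta)$ with $\pi_c(x),\pi_c(y)\in[a]^{\mathrm{o}}$ where $a=a_1\cdots a_n$. By Proposition \ref{cylinder gives code Pisot} there are words $b_1\cdots b_p, b_1'\cdots b_q'\in\{-1,0,1\}^{*}$ with
\[
x=T_{a_1}\circ\cdots\circ T_{a_n}\circ T_{b_p}\circ\cdots\circ T_{b_1}(0),\qquad
y=T_{a_1}\circ\cdots\circ T_{a_n}\circ T_{b_q'}\circ\cdots\circ T_{b_1'}(0).
\]
Applying Proposition \ref{prop2} (with the code read in the correct order) gives $v(x)=\lambda^{-(n+p)}\,W A_{b_1}\cdots A_{b_p} A_{a_n}\cdots A_{a_1}$ and similarly for $y$, so that, passing to projective space (the first entries are strictly positive since $\bar\mu(x),\bar\mu(y)>0$), $v'(x)$ and $v'(y)$ are the images of two fixed vectors $U:=W A_{b_1}\cdots A_{b_p}$ and $\tilde U:=W A_{b_1'}\cdots A_{b_q'}$ under right-multiplication by the common matrix $A_{a_n}\cdots A_{a_1}$.

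Next I would exploit the occurrences of the mixing word $w$ inside $a=a_1\cdots a_n$. Since $a$ contains $d(a)$ pairwise disjoint copies of $w$, the product $A_{a_n}\cdots A_{a_1}$ factors as a product of blocks, $d(a)$ of which are exactly $A_w$ (up to the irrelevant ordering of a fixed reversed word, which I would absorb into the definition once and for all), and the remaining factors are single matrices $A_i$, $i\in\{-1,0,1\}$. By the second bullet of Proposition \ref{ConPro}, each $A_w$-block strictly contracts the projective metric by a factor $C_2<1$ once the metric is finite; by the third bullet, every other factor $A_i$ is non-expanding; and by the first bullet, after the first occurrence of $A_w$ the projective distance between the two iterates is at most $C_1$, regardless of whether it was finite beforehand. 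Reading the factors from the side where $x$ and $y$ `start' toward the fixed seeds $U,\tilde U$: the first $A_w$-block brings the distance below $C_1$, and each of the remaining $d(a)-1$ copies of $A_w$ multiplies the current bound by $C_2$, while the interspersed $A_i$ never increase it. Hence
\[
d(v'(x),v'(y))\le C_1 C_2^{\,d(a)-1},
\]
which is the claim.

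The one genuine subtlety — and the step I expect to need the most care — is the bookkeeping of the order in which matrices are multiplied and which `end' of the word the contraction propagates from. In Proposition \ref{prop2} the code $c_1\cdots c_{n}$ of $x$ with $x=\sum c_i\beta^{n-i}$ produces $WA_{c_1}\cdots A_{c_n}$, but in Proposition \ref{cylinder gives code Pisot} the cylinder data pins down $T_{\e_1}\circ\cdots\circ T_{\e_n}$ as the \emph{outermost} maps, i.e. the \emph{first} letters $c_1\cdots c_n=\e_1\cdots\e_n$ of the code; so the shared block $A_{\e_1}\cdots A_{\e_n}$ sits on the \emph{right} of the fixed seed vectors $U,\tilde U$, and the contraction lemma is applied to right-multiplication, exactly as Proposition \ref{ConPro} is stated. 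I would therefore define $A_w$ and the word $w$ so that $w_1\cdots w_n$ matches the orientation used in Proposition \ref{prop2}, state `disjoint occurrence of $w$ in $a$' in that same orientation, and then the composition of blocks is literally a product in which $d(a)$ factors equal $A_w$. Once this convention is fixed, the remaining argument is just the telescoping estimate above using the three bullets of Proposition \ref{ConPro}; the constants $C_1,C_2$ are precisely those furnished there, so no new constants are introduced. A minor point to remark on is that we only need $\pi_c(x),\pi_c(y)\in[a]^{\mathrm{o}}$ (open cylinder) to invoke Proposition \ref{cylinder gives code Pisot}, which is exactly the hypothesis stated.
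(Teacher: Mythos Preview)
Your proof is correct and follows essentially the same route as the paper: use Proposition \ref{cylinder gives code Pisot} to obtain codes for $x$ and $y$ sharing the tail determined by $a$, apply Proposition \ref{prop2} (equivalently Lemma \ref{AiTransitions}) to write $v(x)$ and $v(y)$ as a common matrix block $A_{a_n}\cdots A_{a_1}$ acting on two seed vectors, and then invoke the three bullets of Proposition \ref{ConPro}. You have in fact been more careful than the paper about the orientation bookkeeping: the paper's proof simply asserts that ``$a_r\cdots a_1$ contains $d$ occurrences of the mixing word $w$'' without commenting on the reversal, whereas you correctly note that the shared matrix block is $A_{a_n}\cdots A_{a_1}$ and that one must fix the convention for $w$ (or equivalently count occurrences of $w$ in the reversed word) so that copies of $A_w$ genuinely appear as factors in that product.
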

	
	\begin{proof}
	By Lemma \ref{cylinder gives code Pisot} we have that $x$ and $y$ both have expansions ending with the word $a$, i.e. we can write $x=\sum_{i=1}^n c_i\beta^{n-i}$, $y=\sum_{i=1}^m d_i\beta^{m-i}$ where both $c_1\cdots c_n$ and $d_1\cdots d_m$ end in word $a_r\cdots a_1$. 
	
Then by Lemma \ref{AiTransitions} we can write
\[
v(x)=\frac{1}{\lambda^n}v(0)A_{c_1}\cdots A_{c_n}=\underbrace{\frac{1}{\lambda^n}v_0A_{c_1}\cdots A_{c_{n-r}}}_{:=U}A_{a_r}\cdots A_{a_1}
\]
and
\[
v(y)=\frac{1}{\lambda^n}v(0)A_{d_1}\cdots A_{d_m}=\underbrace{\frac{1}{\lambda^n}v_0A_{d_1}\cdots A_{d_{m-r}}}_{:=V}A_{a_r}\cdots A_{a_1}
\]  
But now $a_r\cdots a_1$ contains $d$ occurences of the mixing word $w$. the first of which contracts the distance between vectors $U$ and $V$ to at most $C_1$, and the final $d(a)-1$ of which each contract the distance by a factor of $C_2$, as in Proposition\ref{ConPro}. Then we have the required result.

	\end{proof}

We note that Theorem \ref{Thm3} follows as a direct corollary to Propsition \ref{mainpro}, as the vector $v'(x)$ can be written
\[
v'(x)=(\exp(f_2(x_c)),\exp(f_3(x_c)),\cdots \exp(f_k(x_c)))
\]
and that $d(v'(x),v'(y))<C_1C_2^{d(a)-1}$ implies that for each $i\in\{2,\cdots,k\}$ the differences $|\ln(f_i(x_c))-\ln(f_i(y_c)|< C_1C_2^{d(a)-1}$. Projecting $\bar{\mu}$ and the elements of $\Delta$ onto their first coordinates we are done.

Finally we show that all elements of $\bar X$ can be reached from $0$ by applying finitely many translations from the set $\Delta$.
\begin{lemma} 
Let $a_1,...a_m\in\{-1,0,1\}$ be such that $a_1\bar{\beta}^{m-1}+...+a^{m-1}\bar{\beta}+a_m\bar{\beta}^0=0$ and $a_1\neq0$. Then

 \begin{align*}
      \left\{\sum_{i=0}^\gk x_i: \gk\in\mathbb{N}, x_1,...,x_{\gk}\in\Delta\right\}=\bar{X}.
  \end{align*}
\end{lemma}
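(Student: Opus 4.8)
The plan is to read the asserted equality in the way consistent with the statement of Theorem~\ref{Thm3} (``any $x\in X(\beta)$ can be reached from $0$ by applying a finite number of translations from $\Delta$''): the right-hand side consists of those points $0=y_0,y_1,\dots,y_\gk$ obtained by successively adding elements of $\Delta$ \emph{while staying inside} $\bar{X}(\beta)$, so that the inclusion ``$\supseteq$'' of reachable points in $\bar{X}(\beta)$ is immediate, and the real content is that \emph{every} $x\in\bar{X}(\beta)$ is reachable. Equivalently, the graph $G$ on vertex set $\bar{X}(\beta)$ with an edge $y\sim y'$ whenever $y-y'\in\Delta$ is connected. First I would record the elementary structure of $\Delta$: $0\in\Delta$ and $\Delta=-\Delta$ (both immediate from Definition~\ref{DeltaDef}, the second by interchanging $x$ and $y$ in the merging condition). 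Crucially, the hypothesis forces $\bar\beta^{0}\in\Delta$: after normalising $a_1=1$ (negate the word otherwise), $a_1\bar\beta^{m-1}+\dots+a_m\bar\beta^{0}=0$ says $T_{a_m}\circ\cdots\circ T_{a_1}(0)=0$, so $T_{a_1}(0)=\bar\beta^{0}\in\bar{X}(\beta)$, and $T_{a_m}\circ\cdots\circ T_{a_2}$ sends $\bar\beta^{0}$ to $0=T_0\circ\cdots\circ T_0(0)$; hence $\bar\beta^{0}$ and $0$ merge, and $\bar\beta^{0}-0=\bar\beta^{0}\in\Delta$.

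Next I would prove that $\Delta$ contains every sufficiently small difference of points of $\bar{X}(\beta)$. Running the manipulation that turns $\sum_i(c_i-d_i)\bar\beta^{n-i}=0$ into $T_{c_n-d_n}\circ\cdots\circ T_{c_1-d_1}(0)=0$, one sees that for $y,y'\in\bar{X}(\beta)$ one has $y-y'\in\Delta$ exactly when $T_{f_\ell}\circ\cdots\circ T_{f_1}(y-y')=0$ for some $f_1,\dots,f_\ell\in\{-2,-1,0,1,2\}$, i.e.\ the difference is driven to the origin by a finite extended-digit word. Since the $T_i$ expand the first $d$ coordinates and contract the last $s$, while $y-y'\in\bar{Z}$ has contracting part in the bounded set $\mathcal{R}^{\mathrm{o}}-\mathcal{R}^{\mathrm{o}}$, the argument used for Lemma~\ref{XPhiStructure} and for Proposition~\ref{cylinder gives code Pisot} yields a bounded neighbourhood $N_0$ of $0$ in $\mathbb{K}$ such that every lattice point of $N_0$ is driven to $0$ by such a word; the finite check near the origin uses Condition~\ref{condition 1} (equivalently Corollary~\ref{cutprojectcor}) together with $\bar\beta$ being an algebraic unit, as in Lemma~\ref{closedXbar}. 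Consequently any two points of $\bar{X}(\beta)$ differing by an element of $N_0$ are joined by an edge of $G$; as $\Delta$ is finite, this amounts to checking a finite list.

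For the connectivity itself, fix $x\in\bar{X}(\beta)$. By Corollary~\ref{cutprojectcor}, $\bar{X}(\beta)=\bar{Z}\cap\pi_c^{-1}(\mathcal{R}^{\mathrm{o}})$, with $\mathcal{R}$ compact, $\mathcal{R}^{\mathrm{o}}$ nonempty and open, and $\pi_c(\bar{Z})$ dense in $\mathbb{K}_c$ (the ``irrationality'' of the contracting projection, exactly as in the golden-mean case). Hence $\bar{X}(\beta)$ is relatively dense in the slab $\mathbb{K}_{\mathrm{exp}}\times\mathcal{R}^{\mathrm{o}}$: for a suitable fixed radius, $\bar{Z}$ meets every box of that radius lying inside the slab. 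Joining $0$ to $x$ by a polygonal path inside the slab and replacing it by a chain of nearby points of $\bar{X}(\beta)$ whose consecutive differences lie in $N_0$, one obtains a finite chain $0=y_0,y_1,\dots,y_\gk=x$ in $\bar{X}(\beta)$ with $y_t-y_{t-1}\in\Delta$ (Paragraph~2). This gives ``$\supseteq$'', and together with the trivial ``$\subseteq$'' the lemma follows.

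I expect the connectivity step to be the main obstacle. The delicate point is that the window $\mathcal{R}^{\mathrm{o}}$ may be thin, or even disconnected, in some directions, so two points of $\bar{X}(\beta)$ that are close in the expanding projection need not be joinable by $N_0$-sized steps inside $\bar{X}(\beta)$; one must route the approximating chain along the self-similar cylinder structure of $\mathcal{R}$ (Definition~\ref{CylinderDef}, Proposition~\ref{cylinder gives code Pisot}), or alternatively first contract $x$ into the finite near-origin patch $\bar{X}_1(\beta)$ by applying inverse maps $T_i^{-1}$ along a cylinder address and only then invoke the finite check. Making precise that the ``reach'' provided by $\Delta$ (a fixed constant depending on $\beta$) exceeds the relevant covering radius of $\bar{X}(\beta)$ in the slab is what turns the finite verification into a genuine argument, and it is precisely where Condition~\ref{condition 1} is indispensable.
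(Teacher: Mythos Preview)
You have over-interpreted the statement. The lemma, as written, asserts that the set of \emph{all} finite sums $\sum_{i}x_i$ with $x_i\in\Delta$ equals $\bar X(\beta)$; there is no requirement that the partial sums remain in $\bar X(\beta)$. Since $\Delta=-\Delta$ and $0\in\Delta$, the left-hand side is simply the additive subgroup of $\bar Z$ generated by $\Delta$, and the task is just to show that this subgroup contains every $\bar\beta^k$ for $0\le k\le d+s-1$. The paper's proof is a three-line algebraic induction: the hypothesis $T_{a_m}\circ\cdots\circ T_{a_1}(0)=0$ forces every intermediate point
\[
b_k:=T_{a_k}\circ\cdots\circ T_{a_1}(0)=a_1\bar\beta^{\,k-1}+\cdots+a_k\bar\beta^{\,0}\qquad(1\le k\le m-1)
\]
to lie in $\Delta$ (each $b_k$ merges with $0$ under $T_{a_m}\circ\cdots\circ T_{a_{k+1}}$). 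Then $\bar\beta^{\,0}=a_1 b_1\in\Delta(0)$, and if $\bar\beta^{\,0},\dots,\bar\beta^{\,k-1}\in\Delta(0)$ one solves $b_{k+1}=a_1\bar\beta^{\,k}+a_2\bar\beta^{\,k-1}+\cdots+a_{k+1}\bar\beta^{\,0}$ for $\bar\beta^{\,k}$ (using $a_1=\pm1$). Since $m-1\ge\deg\beta$, this recovers all lattice generators and hence $\Delta(0)\supseteq\bar Z\supseteq\bar X(\beta)$.

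Your route is therefore not wrong so much as aimed at a strictly stronger conclusion (connectivity of the $\Delta$-graph \emph{inside} $\bar X(\beta)$), and correspondingly imports heavy machinery: Condition~\ref{condition 1}, the cut-and-project description, relative density, and a chain argument whose delicate step (possible disconnectedness or thinness of $\mathcal R^{\mathrm o}$) you yourself flag as unresolved. None of this is needed here. Even your first step, $\bar\beta^{\,0}\in\Delta$, is exactly the base case of the paper's induction; the missing idea is simply that the remaining intermediate points $b_2,\dots,b_{m-1}$ also lie in $\Delta$, which lets you peel off $\bar\beta^{\,1},\bar\beta^{\,2},\dots$ algebraically rather than geometrically.
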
 

\begin{proof}
Notice that $m\geq\deg(\gb)+1$. We have
\begin{align*}
    T_{a_m}\circ...\circ T_{a_1}(0)=0
\end{align*}
hence the set 
\begin{align*}
    B:&=\left\{T_{a_k}\circ...\circ T_{a_1}(0): 1 \leq k\leq m-1\right\}\\&=
    \left\{a_1\bar{\beta}^{k-1}+...+a^{k-1}\bar{\beta}+a_k\bar{\beta}^0: 1 \leq k\leq m-1\right\}
\end{align*}
is a subset of $\Delta$. Set
 \begin{align*}
      \Delta(0)=\left\{\sum_{i=0}^\gk x_i: \gk\in\mathbb{N}, x_1,...,x_{\gk}\in\Delta\right\}.
  \end{align*}
  
The proof is completed by showing inductively that $\bar{\beta}^0,...,\bar{\beta}^{m-1}\in\Delta(0)$. Indeed $\bar{\beta}^0\in B\subseteq\Delta$ and if $\bar{\beta}^0,...,\bar{\beta}^k\in\Delta(0)$, for $\kappa<m-1$, then 
\begin{align*}
\bar{\beta}^{k+1}=a_1((a_1\bar{\beta}^{k+1}+...+a^{k+1}\bar{\beta}+a_{k+2}\bar{\beta}^0) -a_2\bar{\beta}^{k}-...-a^{k}\bar{\beta}-a_{k+2}\bar{\beta}^0)\in\Delta(0).
\end{align*}

\end{proof}

\section{Appendix}\label{sub2}

	In this section we will prove Lemma \ref{Non-irreducible Perron}

	\begin{proof}
	
	By bringing the matrix to it's normal form of a reducible matrix, see (\cite{Varga2000}, p. 51), we can assume that 
	
	\begin{equation*}
	A=
	\begin{bmatrix}
	B_1& *& *&\cdots & *\\
	0& B_2 & * & \cdots & *\\
	\vdots&\vdots& \vdots & &\vdots\\
	0& 0 & 0 & \cdots & *\\
	0& 0 & 0 & \cdots & B_h
	\end{bmatrix}
	\end{equation*}
	
	where $B_i$ is a non-negative irreducible square matrix for $i\in\{1,...,h\}$. By rescaling we can assume that $\rho(A)=1$. Clearly $1=\rho(A)=\max\{\rho(B_1),...,\rho(B_h)\}$ so from assumption iii) we get $\rho(B_1)=1$. We set 
	\begin{equation*}
	S_i:=\left\{j\in\{1,...,N\}:\text{ The entry (j,j) is contained in the $B_i$-block }\right\}.
	\end{equation*}

	 For $i\in\{1,...h\}$ let 
	\begin{equation*}
	V_i:=\left\{u\in\mathbb{R}^N: u(j)=0\text{ if }j\notin S_i \right\}
	\end{equation*}
	and
	\begin{equation*}
	V_{i-}:=\left\{u\in\mathbb{R}^N: u(j)=0\text{ if }j\notin\cup_{\kappa=1}^{i-1}S_\kappa \right\}.
	\end{equation*}
	
	Define $p_i$ and $p_{i-}$ to be the orthogonal projections of $\mathbb{R}^N$ to the subspaces $V_i$ and $V_{i-}$ respectively. Finally let $B'_i$ to be $A$ where all entries outside the $B_i$-block are replaced by $0$ and $B'_{i-}$ to be $A$ where all the entries of the form $(i,j)$ are replaced by zero if and only if $j\notin\cup_{\kappa=1}^{i-1}S_\kappa$.
	
	We will prove the lemma by proving inductively that $p_i(e_1A^n)$ converges for $i\in\{1,...,h\}$. For $i=1$ we have that $p_i(e_1A^n)=p_i(e_1B'^n_1)$ so the statement is true since  $B_1$ is an irreducible aperiodic matrix of spectral radius one. The aperiodicity comes from assumption i). Now we assume that $i\in\{2,...,h\}$ and $p_{i-}(e_1A^n)$ converges to some $v'\in\mathbb{R}^N$ aiming to prove that $p_i(e_1A^n)$ converges.
	
	Case 1 $\rho(B_i)<1$: We define $T_i:\mathbb{R}^N\rightarrow \mathbb{R}^N$ by
	
	\begin{equation*}
	T_i(x)=xB_i'+p_i\left(v'A\right)
	\end{equation*}
	 
	 Since $\rho(B_i)<1$ there is $u'\in\mathbb{R}^N$ such that $u'(I-B_i')=p_i(v'A)$ so that

	\begin{equation*}
	T_i(x)=(x-u')B_i'+u'.
	\end{equation*}
	
	Now, from $\rho(B_i)<1$ again, we can conclude that $T_i^n(x)\rightarrow u'$ for any $x\in\mathbb{R}^N$. Writing 	
	\begin{equation*}
	p_i(e_1A^n)=T_i^n(0)+p_i(e_1A^n)-T_i^n(0)
	\end{equation*}	
	we only need to prove that $p_i(e_1A^n)-T_i^n(0)\rightarrow0$ to prove the convergence of  $p_i(e_1A^n)$ to $u'$. Let $\e>0$. By the spectral radius formula there exists $C>0$ such that 	
	\begin{equation*}
	||B_i'^n||\leq C\left(\rho(B_i)+\delta\right)^n
	\end{equation*}		
	where $\delta>0$ is chosen such that $\rho(B_i')+\delta<1$. Also by $p_{i-}(e_1A^n)\rightarrow v'$ we get that there is $\kappa_0$ such that  $|p_i(v'A)-p_i(p_{i-}(e_1A^{n-1})A)|<\e$. Notice that	
	\begin{align*}
	    p_i(e_1A^{\gk+1})=p_i(e_1A^{\gk})B_i'+p_i\left(p_{i-}(e_1A^\gk)\right),\quad \gk\in\{0,...\}.
	\end{align*}
	By iterating the relation above and choosing $n$ large enough we get
	\begin{align*}
	|p_i(e_1A^n)-T_i^n(0)|&=\left| \sum_{\kappa=1}^n\left(p_i(p_{i-}(e_1A^{\kappa-1})A)-p_i(v'A)\right)B_i'^{n-\kappa}\right|
	\\&\leq\left|\sum_{\kappa=1}^{\kappa_0-1}\left(p_i(p_{i-}(e_1A^{\kappa-1})A)-p_i(v'A)\right)B_i'^{n-\kappa}\right|+\sum_{\kappa=\kappa_0}^n||B_i^{n-\kappa}||\cdot\e
	\\&\leq \left|\left(\sum_{\kappa=1}^{\kappa_0-1}\left(p_i(v'A)-p_i(p_{i-1}(e_1A^{\kappa-1})A\right)B_i'^{\kappa_0-1-\gk}\right)B_i'^{n-\kappa_0+1}\right|
	\\&+\frac{\e \cdot C}{1-\rho(B_i)-\delta}
	\end{align*}
	Since $xB_i'^n\rightarrow0$ for all $x\in\mathbb{R}^N$ the above gives
	\begin{equation*}
	\limsup_{n\rightarrow\infty} 	|p_i^n(e_1A^n)-T_i^n(0)|\leq \frac{\e \cdot C}{1-\rho(B_i)-\delta}
	\end{equation*}	
	but since $\e$ was arbitrary we get 	
		\begin{equation*}
	\lim_{n\rightarrow\infty} 	|p_i^n(e_1A^n)-T_i^n(0)|=0
	\end{equation*}	
	completing the inductive step in the case $\rho(B_i)<1$.

	Case 2 $\rho(B_i)=1$: Now let $u'$ be a left eigenvector of $1$ of $B_i'$ with all entries in $S_i$ being positive. There exists such a $u'$ from Perron–Frobenius theorem since $B_i$ is a non-negative irreducible matrix. There are $\kappa_0$,$m\in\mathbb{N}$ and $c>0$ such that all entries in $S_i$ of	
	\begin{equation*}
	p_i\left(p_{i-}(e_1A^n)A^m\right)-cu'
	\end{equation*}	
	are positive for all $n>\gk_0$. This is true, by choosing $c$ small enough, because of assumption ii) and $p_{i-}(e_1A^n)\rightarrow v'$. Let $\kappa_1\in\mathbb{N}$ be such that $m(\gk_1-1)>\kappa_0$. The inequalities in the following are to be understood entrywise. For $n$ large enough we have, 
	\begin{align*}
	p_i(e_1A^{nm})&=\sum_{\kappa=1}^n\left(p_i\left(p_{i-}\left(e_1A^{m(\kappa-1)}\right)A^m\right)\right)B_i'^{m(n-\kappa)}
	\\&\geq\sum_{\kappa=\kappa_1}^n\left(p_i\left(p_{i-}\left(e_1A^{m(\kappa-1)}\right)A^m\right)\right)B_i'^{m(n-\kappa)}	
	\\&=\sum_{\kappa=\gk_1}^n\left(p_i\left(p_{i-}\left(e_1A^{m(\kappa-1)}\right)A^m\right)-cu'\right)B_i'^{m(n-\kappa)}+
	\sum_{\kappa=\gk_1}^ncu'B_i'^{m(n-\kappa)}
	\\&\geq\sum_{\kappa=\gk_1}^ncu'B_i'^{m(n-\kappa)}=(n-\gk_1+1)cu'.
	\end{align*}
	
	The above implies that $||p_i(e_1A^{nm})||_1\rightarrow\infty$ which contradicts assumption iii). Thus case 2 never occurs.

	\end{proof}

\section{Further Questions:}
We have a number of further questions on the structure of the sets $X(\beta)$, the measure $\mu$, and on how one can start to study $\mu$ using ergodic theory.

{\bf Question 1:} Is it the case for any integer alphabet $\mathcal A$ and for any hyperbolic $\beta$ one can express $X(\beta)$ (or the higher dimensional analogue $\tilde X(\beta)$ in the non-Pisot case) as a cut and project set with window $\mathcal R$ (or maybe $\mathcal R^{\mathrm{o}}$) defined as the attractor of an iterated function system $\{S_i:i\in\mathcal A\}$ where $S_i$ is defined in terms of the Galois conjugates of $\beta$ of absolute value less than one? We have shown an inclusion in Corollary \ref{cutprojectcor}. This question is also considered in \cite{HareMasakovaVavra}.

{\bf Question 2:} Is it true that, for a sequence of Pisot numbers $\beta_n$ of increasing degree in any interval $(1,2-\epsilon)$, the sequence of sets $\frac{1}{\beta_n-1}\left(X_{\{-1,0,1\}}(\beta_n)\cap\left[\frac{-1}{\beta-1},\frac{1}{\beta-1}\right]\right)$ equidistribute in $[-1,1]$? These sets are just pieces of the spectra of $X_{\{-1,0,1\}}(\beta_n)$ renormalised to live on $[-1,1]$. 

In Conjecture 2 we predict that, for such a sequence of Pisot numbers $\beta_n$, the distance between measures $\mu_{I_{\beta_n}}$ and normalised Lebesgue measure on $I_{\beta_n}$ tends to zero as $n$ tends to infinity. Our question here is the corresponding question for the sets supp$(\mu_{I_{\beta_n}})=X_{\{-1,0,1\}}(\beta_n)\cap\left[\frac{-1}{\beta-1},\frac{1}{\beta-1}\right]$. If the answer to Question 1 is positive, then this is a question about the structure of a sequence of cut and project sets. 

{\bf Question 3:} Does further numerical evidence support our Conjectures 1 and 2 on the dimension of Bernoulli convolutions and the distribution of measures $\mu_{I_{\beta_n}}$? The case that $\beta_n$ is a sequence of Pisot numbers converging to a limit in $(1,2)$ is of particular interest. In that case the limit must also be a Pisot number.

{\bf Question 4:} In the special case of the Golden mean, Theorem \ref{GMCocycle} describes how the measure $\mu$ evolves as one moves through the spectrum. Can one use this theorem, for example, to prove that the sequence of probability measures
\[
\lim_{n\to\infty}\frac{1}{\sum_{x\in X(\phi)\cap [0,n]} \mu\{x\}}\sum_{x\in X(\phi)\cap [0,n]} \mu\{x\}\delta_{x (mod 1)}
\]
converges weak$^*$ to Lebesgue measure on $[0,1]$? Inducing on the region $\{(x,y,z):y\in[0,\phi^2]\}$ we have an irrational rotation in the $x$ direction, and an irrational rotation in the $y$ direction which also gives the weights which tell us how to evolve the measure $\mu$. Then one might believe our question has a positive answer, since the weights $\mu(x)$ are driven by the evolution in the $y$ direction which is somehow independent of our position in the $x$ direction.

\section{Acknowledgements}
Tom Kempton is partially supported by EPSRC grant EP/T010835/1. We are grateful to Paul Mercat, Nikita Sidorov and Tom\'a\v s V\'avra for useful discussions.

\bibliographystyle{abbrv} 
\bibliography{CP}

\end{document}